\newif\ifdraft
\newcommand{\A}{\mathcal A}
\newcommand{\jump}[1]{\text{{\rm \textlbrackdbl}}{#1}\text{{\rm \textrbrackdbl}}}
\newcommand{\nada}[1]{}
\newcommand{\Om}{\Omega}
\newcommand{\R}{\mathbb{R}}
\newcommand{\res}{\mathop{\hbox{\vrule height 7pt width 0.5pt depth 0pt
\vrule height 0.5pt width 6pt depth 0pt}}\nolimits}
\def\Su{\mathbb S^1}
\def\d{\mathrm{d}}
\def\mul{\mathrm{mul}}
\def\N{\mathbb{N}}
\numberwithin{equation}{section}
\mathchardef\emptyset="001F
\newtheorem{theorem}{Theorem}[section]
\newtheorem{definition}[theorem]{Definition}
\newtheorem{prop}[theorem]{Proposition}
\newtheorem{cor}[theorem]{Corollary}
\newtheorem{lemma}[theorem]{Lemma}
\theoremstyle{definition}
\newtheorem{remark}[theorem]{Remark}
\title{On the singular planar Plateau problem
}
\author{Marco Caroccia}
\address{Politecnico di Milano,  Dipartimento di matematica,  Piazza Leonardo da Vinci,  32,  20133 Milano MI}
\email{marco.caroccia@polimi.it}
\author{Riccardo Scala}
\address{Dipartimento di Ingegneria dell'Informazione e Scienze Matematiche, Universit\`a di Siena, 53100 Siena, Italy.}
\email{riccardo.scala@unisi.it}
\begin{document}
	
	\begin{abstract}
		Given any $\Gamma=\gamma(\Su)\subset\R^2$, image of a Lipschitz curve $\gamma:\Su\rightarrow \R^2$,  not necessarily injective,  we provide an explicit formula for computing the value of  
\[
\mathcal A(\gamma):=\inf\left\{\left. \int_{B_1(0)}|\det(\nabla u)| \d x \ \right| \ u=\gamma \text{ on }\Su\right\},
\]
where the infimum is computed among all Lipschitz maps $u:B_1(0)\rightarrow \R^2$ having boundary datum $\gamma$.  This coincides with the area of a minimal disk spanning $\Gamma$, i.e., a solution of the  Plateau problem of disk type.  The novelty of the results relies in the fact that we do not assume the curve $\gamma$ to be injective and our formula allows for countably many self-intersections.
	\end{abstract}

\maketitle
	
\section{Introduction}
In this paper,  we consider the Plateau problem for singular curves, i.e., for curves $\gamma$ whose image is not necessarily a Jordan curve, but might have self-intersections.  Specifically,  let us denote by $D:=\{x\in \R^2 \ | \ |x|\leq 1\}$ the unit disk centered at $0$ in $ \R^2$ and by $\Su:=\partial D$.  We focus on planar singular curves, and we consider the following Plateau problem: given a Lipschitz curve $\gamma \in  C^{0,1} (\Su;\R^2)$, compute the value of the infimum 
\begin{equation}\label{eqn:InfProbl}
\mathcal A(\gamma)=\inf\left\{A(\psi)\ \left| \ \psi\in C^{0,1}( D;\R^2)\text{ such that } \psi=\gamma\text{ on }\Su \right. \right\}
\end{equation}
with
\[
A(\psi):=\int_D |\det (\nabla \psi)| \d x.
\]
We do not focus on existence and regularity of solutions $\psi$ of the singular Plateau problem \eqref{eqn:InfProbl} but our scope is to understand how the value of $\A(\gamma)$ is related to the geometry of the curve $\gamma$, and in some cases how to compute it.  Questions regarding the existence and regularity of solutions for this problem have been first investigated in \cite{Hass91} in 1991.  More recently, following the analysis in \cite{Creutz} (see also \cite{CF} for a metric space setting), it is shown that a solution of the singular Plateau problem can be chosen in suitable Sobolev spaces.  \\

When $\Gamma:=\gamma(\Su)$ represents a Jordan curve then \eqref{eqn:InfProbl} corresponds to the classical Plateau problem,  which consists into finding a map $\psi\in H^1(\text{{\rm int}}(D);\R^3)\cap C^0(D;\R^3)$ (here $\mathrm{int}(D)$ stands for the \textit{interior} of $D$) minimizing the area functional
\begin{align}\label{area_functional}
	A(\psi)=\int_D|\partial_{x_1}\psi\wedge\partial_{x_2}\psi|d x.
\end{align}
among all maps whose restrictions on $\Su$ coincide,  up to reparametrization,  with $\gamma$.
 We refer to \cite{DHS} for a general treatment of the parametric approach to the aforementioned Plateau problem.  Here we just recall some well-known facts that are useful for our discussion.  For instance, it is well-known that if $\psi$ is a minimizer of \eqref{area_functional}, for $\Gamma$ being a Jordan curve,  then it also turns out to be harmonic and conformal in the interior of $D$ (cf.  \cite{DHS}).
In the special case where the Jordan curve is also planar,  as in $\gamma:\Su\rightarrow \R^2$,  the solution to the Plateau problem is provided by the Riemann map $\psi$,  i.e., a bi-holomorphic bijection between  $\text{{\rm int}}(D)$ and $S$,  the unique open bounded connected component of $\R^2\setminus\Gamma$ (see Proposition \ref{Riemann_plateau} below).  Since $\psi(D)=S$,  it follows that 
$\mathcal A(\gamma)=|S|$.   In very few other cases, it is possible to find the exact value of $A(\psi)$,  when $\psi$ is a solution to the Plateau problem.  However,  unless $\psi$ is not explicit, or unless $\Gamma$ has some special geometry, $A(\psi)$ is not known.  We are here able to provide two very general results (Theorem \ref{teo1_intro}, \ref{teo2_intro}) for computing the value of $\A(\gamma)$,   enhancing the role of the geometry of $\gamma$.  Our results apply to general Lipscthitz curve (with finite length) that might possibly have up to countably many self-intersections.  \\

We start our analysis by focusing on curves $\gamma$ with the following property:
\medskip

\begin{itemize}
\item[(P)] Letting $\Gamma:=\gamma(\Su)$, the set $\R^2\setminus \Gamma$ has finitely many connected components.
\end{itemize}
\medskip

Under hypothesis (P),  if $n$ denotes the number of bounded connected components of $\R^2\setminus \Gamma$,  then we fix $n$ points $P_i$, $i=1,\dots,n$,  each belonging to exactly one of these connected components denoted by $U_i$ (see Figure \ref{fig1:NotationAndExample}).  We now consider $\gamma$ as a loop in $\R^2\setminus \{P_1,\dots,P_n\}$,  with starting point $P:=\gamma((1,0))$.  So there is a representative of $\gamma$ in the homotopy group based at $P$,  $F(n):=\pi_1(\R^2\setminus \{P_1,\dots,P_n\})$.  We denote this representative by $\gamma$ itself.  Here and in what follows, for the sake of clarity,  we will always omit to specify the dependence of $\pi_1$ from $P:=\gamma((0,1))$.  Our first main result can then be summarized in the following theorem:

\begin{theorem}\label{teo1_intro}
	Let $\gamma:\Su\rightarrow \R^2$ be a Lipschitz curve satisfying Property (P), and let $n\geq0$ be the number of bounded connected components of $\R^2\setminus \Gamma$. Then there is a function $f=f^\gamma:F(n)\rightarrow \mathbb N^n$ such that the following holds:
	\begin{align}
\mathcal A(\gamma)=\sum_{i=1}^nf_i(\gamma)\mathcal L^2(U_i), 
	\end{align}
where $U_i$ is the $i$-th bounded connected components of $ \R^2\setminus \Gamma$, and $\mathcal L^2(U_i)$ denotes its Lebesgue measure.
\end{theorem}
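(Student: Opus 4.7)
The plan is to reduce $A(\psi)$ to a sum of integer multiplicities over the regions $U_i$ via the area formula, and then to show that the minimal multiplicities depend only on the class $[\gamma]\in F(n)$. By the area formula applied to any Lipschitz filling $\psi:D\to\R^2$, and the fact that $\psi(D)$ may be assumed contained in $\bigcup_i\overline{U_i}$ (otherwise, post-composition with a Lipschitz retraction onto this bounded region can only decrease $A$),
\[
A(\psi)=\int_{\R^2}\#\psi^{-1}(y)\,\d y=\sum_{i=1}^n\int_{U_i}\#\psi^{-1}(y)\,\d y.
\]
A first integer lower bound comes from Brouwer degree theory: for almost every $y\in U_i$ one has $\#\psi^{-1}(y)\geq|\deg(\psi,y)|=|w_i(\gamma)|$, where $w_i(\gamma)$ is the winding number of $\gamma$ around $P_i$; this is already a homotopy invariant in $F(n)$.

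Next I would prove the homotopy invariance of $\A$ with respect to $F(n)$. Given a Lipschitz homotopy $H:\Su\times[0,1]\to\R^2\setminus\{P_1,\dots,P_n\}$ between $\gamma_0,\gamma_1$ and a filling $\psi_0$ of $\gamma_0$, I would split $D$ into an inner disc and an outer annulus, paste the rescaled $\psi_0$ on the disc and a Lipschitz reparametrisation of $H$ on the annulus. Since every class in $F(n)$ admits a representative homotopy whose image lies in a one-dimensional graph (built from small loops around each $P_i$ and connecting arcs), the annular contribution to $A$ vanishes by the area formula. Hence $\A(\gamma_0)=\A(\gamma_1)$, so that $f_i(\gamma)$ can be defined as a function of $[\gamma]$ alone.

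For the matching upper bound, for each class $[\gamma]\in F(n)$ I would construct an explicit optimal filling. Using homotopy invariance, I reduce $\gamma$ to a canonical representative obtained by concatenating a tree of arcs (avoiding the $U_i$) with prescribed small loops around each $P_i$. The filling is then assembled along the tree from multi-sheeted discs, each covering $U_i$ a fixed integer number $f_i(\gamma)$ of times via the Riemann map of $U_i$ (Proposition~\ref{Riemann_plateau}) pre-composed with a degree-$f_i(\gamma)$ self-map of $D$; the tree of arcs contributes $0$ to the area. Combining with the lower bound from the area formula yields
\[
\A(\gamma)=\sum_{i=1}^n f_i(\gamma)\,\L^2(U_i),\qquad f_i(\gamma)\in\N.
\]

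The main obstacle is identifying the correct integers $f_i(\gamma)$ and producing a matching filling. The lower bound $f_i(\gamma)\geq|w_i(\gamma)|$ is immediate from degree theory, but the inequality may be strict when $[\gamma]$ is a non-trivial class with vanishing abelianisation, such as a commutator $[a,b]$. One must therefore track the combinatorics of the free-group word representing $\gamma$ in order to find the true minimal sheet-count in each $U_i$, and to exhibit a Lipschitz filling achieving these minima simultaneously in all regions; this is the delicate combinatorial content behind Theorem~\ref{teo1_intro}.
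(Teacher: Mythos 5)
Your homotopy-invariance step fails, and this is the central gap. You claim $\mathcal{A}(\gamma_0)=\mathcal{A}(\gamma_1)$ whenever $[\gamma_0]=[\gamma_1]$ in $F(n)$, by pasting a homotopy $H$ on an annulus whose Jacobian contribution vanishes. But $\mathcal{A}$ is not a homotopy invariant: with $n=1$, a small circle and a large concentric circle around $P_1$ both represent $\sigma_1$, yet their $\mathcal{A}$-values are the two disk areas. Any Lipschitz homotopy in $\R^2\setminus\{P_1\}$ between them sweeps the annulus with degree one, so $\int|\det\nabla H|$ cannot vanish. The paper explicitly flags this: $\mathcal{A}(\gamma)$ depends not only on $[\gamma]\in F(n)$ but also on the areas $\mathcal{L}^2(U_i)$, which change under homotopy because $\Gamma$ itself changes. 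Once this step is gone, the reduction to a canonical tree-of-arcs representative (which has a different image and hence different $U_i$'s) is not available, and neither is defining $f_i$ purely as a function of the class.

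The combinatorial content you explicitly defer is exactly where the theorem lives, and it is not supplied. The pointwise degree bound $\#\psi^{-1}(y)\geq|\mathrm{deg}(\psi,D,y)|$ is too weak, as you note for commutators; what is needed is a constraint that couples the multiplicities across all $U_i$ simultaneously. The paper introduces $\mathrm{Ad}(\gamma)\subset\N^n$, the set of tuples $(k_1,\dots,k_n)$ such that inserting $k_i$ conjugates of $\sigma_i^{\pm1}$ (``$i$-monoids'') into a word for $\gamma$ yields the identity in $F(n)$, and proves in Lemma \ref{lem:Crucial} that for \emph{every} Lipschitz filling $u$ and generic $z_i\in U_i$ the multiplicity vector $(\#u^{-1}(z_1),\dots,\#u^{-1}(z_n))$ lies in $\mathrm{Ad}(\gamma)$ — this is obtained by retracting $\Su$ through the preimage points, reading off one monoid insertion per crossing. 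Then $\mathcal{A}(\gamma)=\min\{\sum_i k_i|U_i| : (k_i)\in\mathrm{Ad}(\gamma)\}$, the upper bound being realized by decomposing $\gamma$ into a concatenation of conjugates $\beta_j\overline\sigma_{i_j}^{\pm1}\beta_j^{-1}$ where $\overline\sigma_i$ parametrizes $\partial U_i$, and invoking subadditivity of $\mathcal{A}$ under concatenation together with $\mathcal{A}(\overline\sigma_i)=|U_i|$ (Proposition \ref{Riemann_plateau}). A smaller flaw: post-composing $\psi$ with a retraction onto $\bigcup_i\overline{U_i}$ is not obviously area-nonincreasing, since that compact set need not be convex and need not admit a $1$-Lipschitz retraction; the paper avoids this by simply discarding the non-negative $U_0$ contribution in the area formula.
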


The function $f^\gamma$ is explicit. The precise value of $\mathcal A(\gamma)$ is detailed in Theorem \ref{teo_main}.  We now provide a brief explanation on computing $f^{\gamma}$ aside from the technical details discussed in Section \ref{sct:PfMain}.  Since $F(n)$ is the free group on $n$ elements,  here $\gamma$ is equivalent to a finite sequence of symbols in $\{\sigma_1,\dots,\sigma_n\}\cup\{\sigma_1^{-1},\dots,\sigma_n^{-1}\}$, the generators of $F(n)$. 
For each word $\eta$ in $F(n)$, we associate it with specific $n$-tuples of natural numbers $(k_1,\ldots,k_n)\in \mathbb{N}^n$. This $n$-tuple is built by counting how many times each element $\sigma_i$ or $\sigma_i^{-1}$ must be erased from $\eta$ to obtain the null word.  For example the word $\sigma_1\sigma_2$ is associated with the $n$-tuple $(1,1,0,\ldots,0)$. This is because the null word can be obtained by erasing one $\sigma_1$ and one $\sigma_2$.   The word $\sigma_1\sigma_2\sigma_1^{-1}$ instead becomes the null word by erasing either a single $\sigma_2$,  or by erasing one each of $\sigma_1$, $\sigma_1^{-1}$, and $\sigma_2$.  Thus both the $n$-tuple $(2,1,0,\ldots,0)$ and $(0,1,0,\ldots,0)$ are associated to the word $\sigma_1\sigma_2\sigma_1^{-1}$.  We gather all $n$-tuples related to any word representing $\gamma$ into a set, denoted as $\mathrm{Ad}(\gamma)\subset \N^n$.  Let us point out to the reader that the set $\mathrm{Ad}(\gamma)$,  formally built in Subsection \ref{sbsct:MainResult},  is created by considering \textit{injections} rather than cancellations.  An injection is the insertion of the inverse $\sigma_{i}^{-1}$ in the word,  to annihilate a $\sigma_{i}$.  Actually,  due to technical reasons, a slightly more general operation is needed to construct $\mathrm{Ad}(\gamma)$ accurately.  Once this set is built we show that we can compute the value $\mathcal{A}(\gamma)$ as
\begin{equation}\label{eqn:minPr}
\mathcal{A}(\gamma):=\min\left\{\left. \sum_{i=1}^n k_i |U_i| \ \right| \ (k_1,\ldots,k_n)\in \mathrm{Ad}(\gamma)\right\}
\end{equation}
and this is the content of Section \ref{sct:PfMain}, where Theorem \ref{teo_main} is proved.  Thus $f^{\gamma}$ is just the $n$-tuple achieving the minimum in \eqref{eqn:minPr}.  For instance,  considering the curve in Figure \ref{fig1:NotationAndExample} we have $\gamma=\sigma_1\sigma_2^{-1}\sigma_3$ and thus the minimal way to obtain the null word is by erasing each single generator $\sigma_1,\sigma_2,\sigma_3$.  Thus $(1,1,1)\in \mathrm{Ad}(\gamma)$ and 
\[
\A(\gamma)=|U_1|+|U_2|+|U_3|.
\]
In Figure \ref{fig2:ExamplefFiore} instead we have that $\gamma\equiv \sigma_1\sigma_2\sigma_3\sigma_2^{-1}$.  Thus, by erasing $\sigma_3$ from the word, we obtain $\gamma'=\sigma_1\sigma_2 \sigma_2^{-1}\equiv \sigma_1$, and then by erasing $\sigma_1$, we get the null word. Thus $(1,0,1)\in \mathrm{Ad}(\gamma)$ and it is immediate to see that it is the optimum: 
\[
\mathcal{A}(\gamma)=|U_1|+|U_3|.
\]
\begin{figure}[t!]
\begin{center}
\includegraphics[scale=0.6]{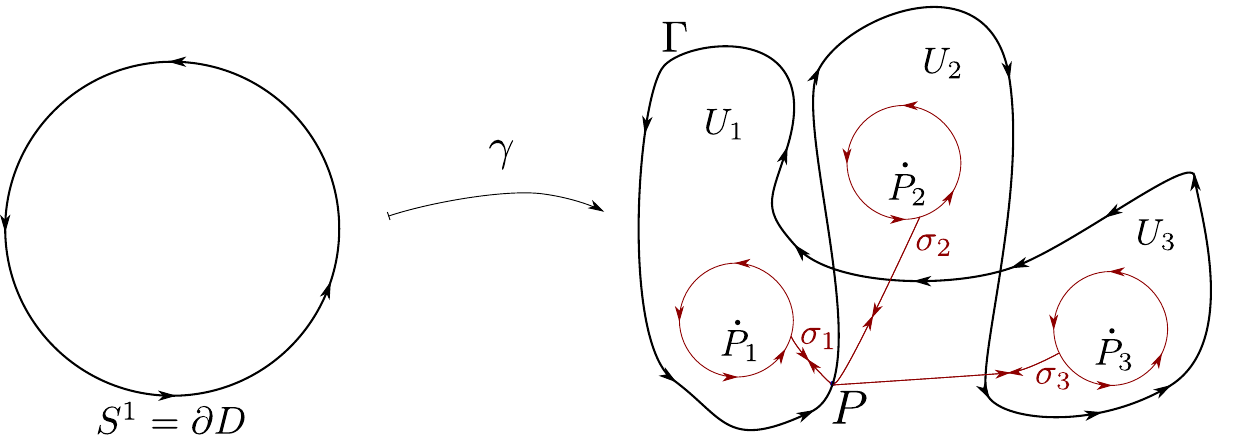}
\caption{A non-injective curve with a support set $\Gamma$, such that $\mathbb{R}^2\setminus \Gamma$ consists of three bounded connected components. In terms of equivalence classes in $\pi_1$, we have $\gamma\equiv \sigma_1\sigma_2^{-1}\sigma_3$ in $\pi_1(\mathbb{R}^2\setminus{P_1,P_2,P_3})$.}\label{fig1:NotationAndExample}
\end{center}
\end{figure}

\begin{figure}[t!]
\begin{center}
\includegraphics[scale=0.6]{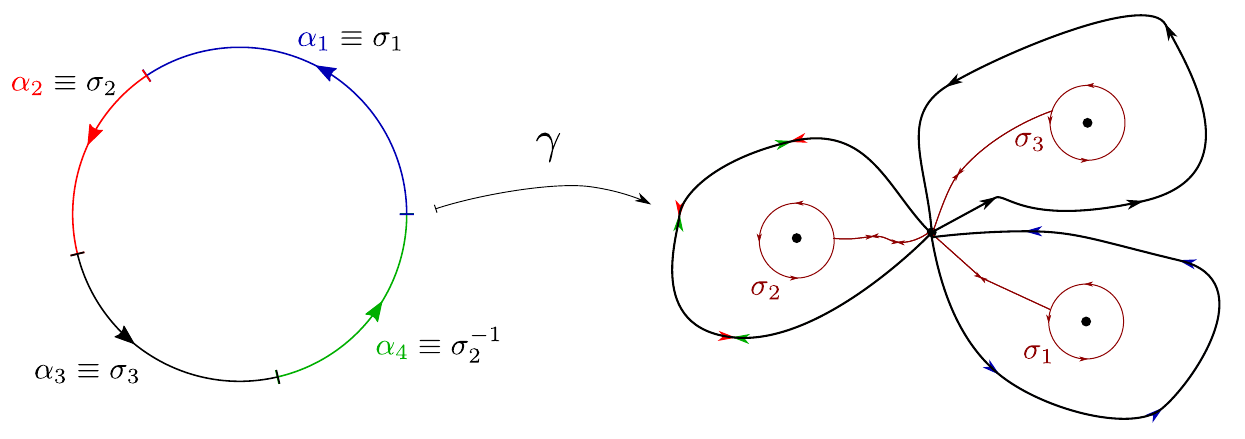}
\caption{A non-injective curve whose support $\Gamma$ is such that $\R^2\setminus \Gamma$ has $3$ bounded connected components.  As equivalence class we have $\gamma\equiv \sigma_1\sigma_2\sigma_3\sigma_2^{-1}$ in $\pi_1(\R^2\setminus\{P_1,P_2,P_3\})$ and thus $(1,0,1)\in \mathrm{Ad}(\gamma)$.}\label{fig2:ExamplefFiore}
\end{center}
\end{figure}

Note that annihilation is not mandatory for every pair $\sigma_i, \sigma_i^{-1}$: consider a curve supported on $\Gamma$ and splitting $\R^2\setminus \Gamma$ into $2$ connected components with $|U_1| > |U_2|$.  Let $\gamma$ be its parametrization, where $\gamma\equiv \sigma_1\sigma_2\sigma_1^{-1}\sigma_2^{-1}\in \pi_1(\R^2\setminus {P_1,P_2})$ (cf with Figure \ref{fig:Example}).  Then $(2,0)\in \mathrm{Ad}(\gamma)$ as we can erase $\sigma_1$, $\sigma_1^{-1}$ to obtain the null word:
\[
\sigma_1\sigma_2\cancel{\sigma_1^{-1}}\sigma_2^{-1}=\sigma_1\sigma_2\sigma_2^{-1}\equiv \sigma_1, \ \ \cancel{\sigma_1}=1.
\]
However the minimum is achieved by $(0,2)$ (which similarly belongs to $\mathrm{Ad}(\gamma)$) since $|U_1|>|U_2|$:
\[
\A(\gamma)=2|U_2|.
\]
\begin{figure}[b!]
\begin{center}
\includegraphics[scale=0.6]{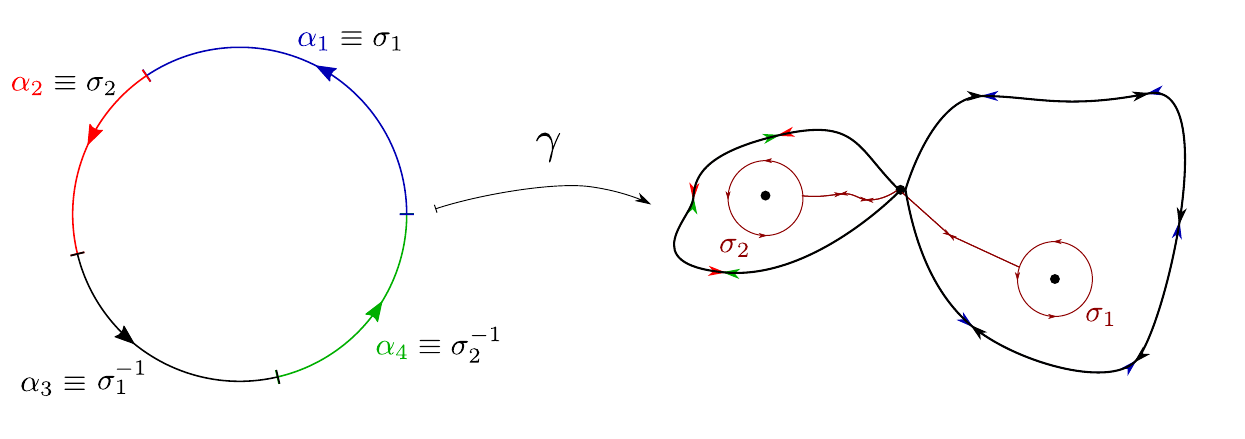}
\caption{A non-injective curve supported on a $\Gamma\subset \R^2\setminus\{P_1,P_2\}$ with a representing word $\gamma\equiv \sigma_1\sigma_2\sigma_1^{-1}\sigma_2^{-1}$.  Here both $(0,2), (2,0)\in\mathrm{Ad}(\gamma)$ but the optimum is given by $(0,2)$ since $|U_1|>|U_2|$.}\label{fig:Example}
\end{center}
\end{figure}
It is worth noting that $\mathcal A(\gamma)$ depends not only on the homotopy class of $\gamma$ in $F(n)$, but also on the areas $\mathcal L^2(U_i)$; Specifically,  two curves,  $\gamma$ and $\eta$ which are homotopically equivalent,   may have different functions $f^\gamma$ and $f^\eta$. \\ 
\smallskip

The proof of Theorem \ref{teo1_intro} is obtained by separately proving the lower bound (in Subsection \ref{sbsct:LB}) and the upper bound (in Subsection \ref{sbsct:UB}).   The lower bound is proven by observing that the set $\mathrm{Ad}(\gamma)$ contains information about some common features shared by any $u:D\rightarrow \R^2$ with $u\res \Su =\gamma$.  In particular,  we observe that if $z_i\in U_i$ are regular points for such a $u$,  then $\#(u^{-1}(z_i))\in \N$, and the $n$-tuple $(\#(u^{-1}(z_1)),\ldots ,\#(u^{-1}(z_n))) \in \mathrm{Ad}(\gamma)$.  Geometrically,  for a fixed family of $\{z_i\in U_i\}_{i=1}^n$,  this connection is intuitively justified by observing that the null word must be (homotopically) equivalent to $u\res \partial \tilde{D}$ where $ \tilde{D}\subset D$ is a small disk with $u^{-1}(z_i)\cap \tilde{D}= \varnothing$  for all $i=1,\ldots,N$.  If we retract the disk $\Su=\partial D$ onto $\partial \tilde D$ we must thus overcome all the points in $u^{-1}(z_i)$,  and each time we trepass a point in  in $u^{-1}(z_i)$,  this results in a cancellation of $\sigma_i$ or $\sigma_i^{-1}$ in the word describing $\gamma$.   This is the content of the crucial Lemma \ref{lem:Crucial} establishing this connection.  The area formula now allows to link the area of the $u$ with its multiplicities in $U_i$ and Lemma \ref{lem:Crucial} links the multiplicities of $u$ with $\gamma$.  \\

The upper bound is essentially constructive and is proven by first observing that the area functional is subadditive with respect to concatenation of curves.  Then we notice that a list $(k_1,\ldots,k_n)\in \mathrm{Ad}(\gamma)$ allows us to express $\gamma$ as a convenient concatenation of simple Jordan curves,  for which we can compute the area with classical results. \\

After completing our analysis o curves possessing property (P) we observe that Theorem \ref{teo1_intro} extends to the case of general curves with possibly countably many self-intersections.  In this case,  the bounded connected components $U_i$ of $\R^2\setminus\Gamma$ are countably many,  and so we can fix a sequence of points $P_i\in U_i$, $i\geq1$. For all $n\geq1$ we consider $\gamma^n\in F(n)$, the representative of $\gamma$ in $\pi_1(\R^2\setminus \{P_1,\dots,P_n\})$; The value of $\mathcal A(\gamma)$ is then obtained as a limit process.
\begin{theorem}\label{teo2_intro}
Let $\gamma:\Su\rightarrow \R^2$ be any Lipschitz curve such that  $\R^2\setminus\Gamma$ consists of countable many bounded connected components $U_i$, $i\geq1$.  Then, for all $n\geq1$ there is a function $f^n:F(n)\rightarrow \N^n$ such that 
\begin{align}
	\mathcal A(\gamma)=\lim_{n\rightarrow \infty}\sum_{i=1}^nf^n_i(\gamma)\mathcal L^2(U_i). 
\end{align}	
In particular the limit in the right-hand side does not depend on the choice of the indexes for $P_i$, i.e., if $P_{\tau(i)}$ is a reordering of the sequence $(P_i)_{i\geq 1}$, then there are functions $g^n:F(n)\rightarrow \N^n$ such that 
	$$\lim_{n\rightarrow \infty}\sum_{i=1}^nf^n_i(\gamma)\mathcal L^2(U_i)=\lim_{n\rightarrow \infty}\sum_{i=1}^ng^n_i(\gamma)\mathcal L^2(U_{\tau(i)}).$$
	\end{theorem}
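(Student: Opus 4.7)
The plan is to reduce Theorem \ref{teo2_intro} to Theorem \ref{teo1_intro} by truncation in $n$. Set $a_n := \sum_{i=1}^{n} f^n_i(\gamma)|U_i|$. First I would prove that $(a_n)_{n\geq 1}$ is non-decreasing: the canonical group homomorphism $F(n+1)\to F(n)$ which sends $\sigma_{n+1}\mapsto 1$ (and fixes the other generators) maps the class of $\gamma^{n+1}$ to the class of $\gamma^n$, and from the explicit construction of $\mathrm{Ad}(\cdot)$ recalled in Subsection \ref{sbsct:MainResult} it is direct to check that the projection onto the first $n$ coordinates sends $\mathrm{Ad}(\gamma^{n+1})$ into $\mathrm{Ad}(\gamma^n)$ (any sequence of injections realizing a tuple at level $n+1$ descends to one at level $n$ after erasing the $\sigma_{n+1}$-operations). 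Hence $a_n\leq a_{n+1}$; combined with the universal bound $a_n\leq \A(\gamma)<+\infty$ established in the next step, the limit $\lim_n a_n$ exists.

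For the lower bound, let $u\in C^{0,1}(D;\R^2)$ be any competitor with $u\res\Su=\gamma$. The area formula yields
\[
A(u)=\int_{\R^2}\#(u^{-1}(z))\,\d z \geq \sum_{i\geq 1}m_i(u)\,|U_i|,
\]
where $m_i(u)$ is the a.e.\ constant value of $z\mapsto \#(u^{-1}(z))$ on the connected component $U_i$ (well-defined since $\mathcal L^2(\Gamma)=0$ for a Lipschitz curve). Applying Lemma \ref{lem:Crucial} with the punctures $P_1,\dots,P_n$ gives $(m_1(u),\dots,m_n(u))\in \mathrm{Ad}(\gamma^n)$ for every $n\geq 1$, so $A(u)\geq \sum_{i=1}^n m_i(u)|U_i|\geq a_n$. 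Taking the infimum over $u$ and then the limit $n\to\infty$ yields $\A(\gamma)\geq \lim_n a_n$.

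The upper bound is where the main difficulty lies, since the constructive argument in Subsection \ref{sbsct:UB} relies crucially on property (P). My strategy is to approximate $\gamma$ by Lipschitz curves $\tilde\gamma_n$ satisfying (P), obtained by modifying $\gamma$ only inside the union $\bigcup_{j>n}\overline{U_j}$ of its small bounded components: each visit of $\gamma$ to such a small region is replaced by a short Lipschitz detour that avoids creating any new bounded component indexed by $j>n$, while preserving both the endpoint configuration and the homotopy class $\gamma^n\in F(n)$ relative to the fixed punctures $P_1,\dots,P_n$. Since $\sum_{i}|U_i|<\infty$, the modification can be arranged so that the bounded components $\tilde U_i^n$ of $\R^2\setminus \tilde\gamma_n(\Su)$ satisfy $|\tilde U_i^n|\to |U_i|$ as $n\to\infty$. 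Applying Theorem \ref{teo1_intro} to $\tilde\gamma_n$ produces a Lipschitz competitor $\tilde u_n$ of area $\sum_{i=1}^n f^n_i(\gamma)|\tilde U_i^n|$; interpolating $\tilde u_n$ with $\gamma$ in a thin Lipschitz collar attached to $\Su$ yields a competitor $u_n$ for the original $\gamma$ whose area differs from $\sum_{i=1}^n f^n_i(\gamma)|\tilde U_i^n|$ by an error vanishing with $n$ (controlled by the total area $\sum_{j>n}|U_j|$ of the modified regions and the Lipschitz constant of the collar map). Passing to the limit gives $\A(\gamma)\leq \lim_n a_n$, and the technical heart of the proof will lie in making this detour/collar construction precise without inflating the multiplicities on the auxiliary regions.

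For the independence of ordering, $\A(\gamma)$ is an intrinsic geometric quantity of $\gamma$ and does not depend on any labelling of the $U_i$. Since the same arguments applied to the reordered enumeration $(U_{\tau(i)})_{i\geq 1}$ produce a sequence of functions $g^n:F(n)\to \N^n$ with $\lim_n\sum_{i=1}^n g^n_i(\gamma)|U_{\tau(i)}|=\A(\gamma)$, the two limits must coincide.
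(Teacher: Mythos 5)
Your lower bound and monotonicity arguments are correct and mirror the paper's: Corollary~\ref{cor:lowerbound} (which is the same area-formula-plus-Lemma~\ref{lem:Crucial} argument you give) yields $\mathcal A(\gamma)\geq a_n$, and the projection $F(n+1)\to F(n)$ killing $\sigma_{n+1}$ gives $a_{n+1}\geq a_n$. The ordering-independence paragraph is also fine, since both limits equal $\mathcal A(\gamma)$.

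The upper bound, however, has a genuine gap, and you correctly sense where it is but do not close it. Two specific problems. First, the claim that one can modify $\gamma$ ``only inside $\bigcup_{j>n}\overline{U_j}$'' to kill the small components while preserving the class $\gamma^n\in F(n)$ is not a construction: the components $U_j$ with $j>n$ need not be geometrically localized, the curve lives on $\partial U_j$ rather than in $U_j$, and it is not explained how a ``detour'' can merge components without creating new ones or affecting $U_1,\dots,U_n$. The paper instead produces a suitable finitely-self-intersecting approximant by a transversality trick: mollify, then add a generic linear drift $\lambda t$ so that the set $\{\zeta(t)=\zeta(s)\}$ becomes finite (Lemma~\ref{lem_approxcurves}); this gives $\gamma_\varepsilon$ at small Fr\'echet distance and with controlled length, with no need for the curve to avoid or stay inside prescribed regions. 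Second, and more importantly, your statement that ``Applying Theorem~\ref{teo1_intro} to $\tilde\gamma_n$ produces a Lipschitz competitor $\tilde u_n$ of area $\sum_{i=1}^n f^n_i(\gamma)|\tilde U^n_i|$'' is unjustified. Theorem~\ref{teo1_intro} applied to $\tilde\gamma_n$ optimizes over $\mathrm{Ad}(\tilde\gamma_n)$, which is computed with respect to \emph{all} bounded components of $\R^2\setminus\tilde\gamma_n(\Su)$, not only the first $n$, and there is no a priori reason for the optimal tuple there to coincide with $f^n(\gamma)$ or even to project to an element of $\mathrm{Ad}(\gamma^n)$ with comparable cost. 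Making this comparison is exactly the missing technical content. In the paper this is handled by (i) Lemma~\ref{Rem_34}, which shows that moving from $\gamma$ to $\gamma_\varepsilon$ via a small-area homotopy $\Psi_\varepsilon$ shifts admissible tuples by the multiplicity of $\Psi_\varepsilon$ at each puncture, and that multiplicity is controlled in $L^1$ by the homotopy's area; and (ii) Proposition~\ref{prop:crucial}, which lets one compare $\mathrm{Ad}$-sets computed over a refinement of the puncture set (the $W_{ij}=U_i\cap V_j$ decomposition) with the coarser one by taking minima over sub-punctures. Without an argument of this type your chain $\mathcal A(\gamma)\leq\mathcal A(\tilde\gamma_n)+o(1)\leq a_n+o(1)$ is not established — the first inequality you essentially get from Proposition~\ref{prop:continuity} (if you can guarantee $d_F(\tilde\gamma_n,\gamma)\to0$), but the second is asserted, not proved.
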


Again,  as $f^n$ is explicit, also $\mathcal A(\gamma)$ is explicitly determined (see Theorem \ref{teo_main2} for details).  The proof of Theorem \ref{teo2_intro} relies in approximating the curve $\gamma$ with a smooth curve $\gamma_\varepsilon$ satisfying (P) and for which we can invoke Theorem \ref{teo1_intro}.  \\
\smallskip

\textbf{Organization of the paper.} In Section \ref{sct:Preliminaries} we set up the notation and the main ingredients to state the main results in their complete form.  We also introduce all the main tools that we will use in Section \ref{sct:PfMain},  where we prove the main Theorem   \ref{teo1_intro}.  Finally in Section \ref{sct:Cnn} we prove Theorem \ref{teo2_intro}  dealing with curves splitting $\R^2$ in possibly countably many bounded connected component.

\section{Preliminaries and main statement}\label{sct:Preliminaries}

We will denote by $D=\{x\in \R^2:|x|\leq1\}=\overline B_1(0)\subset\R^2$ the closed ball with radius $1$ and center the origin. Thus 
$$\Su:=\{x\in \R^2:|x|=1\}=\partial D.$$

\subsection{Area-spanning equivalent curves}
We here collect some result required to prove Theorem \ref{teo_main} and \ref{teo_main2}.
\begin{definition}
{\rm	Given the function $\psi:[-1,1]\rightarrow [0,1]$, $\psi(t)=1-|t|$, let $\Delta\subset\R^2$ be defined as
	$$\Delta:=\{(x_1,x_2)\in \R^2:-\psi(x_1)<x_2<\psi(x_1)\}.$$
	We distinguish
	$\Delta^\pm:=\Delta\cap \{x_2\gtrless0\}$. }
\end{definition}
Notice that $\Delta\subset D$, and that there is a bi-Lipschitz homeomorphism\footnote{This can be explicitely built, for instance setting $\Phi(0)=0$ and $\Phi(x_1,x_2):=\frac{|x_1|+|x_2|}{\sqrt{x_1^2+x_2^2}} (x_1,x_2)$. } \begin{align}\label{Phi_def}
\Phi:\overline\Delta\rightarrow D.
\end{align} In particular, composing $\Phi$ with any function $\phi:D\rightarrow \R^2$, we get a function $\phi\circ\Phi:\overline\Delta\rightarrow \R^2$ satisfying 
\begin{align*}
A(\phi\circ\Phi;\Delta):=&\int_{\Delta}|\det(\nabla (\phi\circ\Phi))|dx=\int_{\Delta}|\det(\nabla \phi (\Phi(x)))||\det(\nabla \Phi(x))|dx\\
 =&\int_{D}|\det(\nabla \phi (y))| dy =A(\phi).
\end{align*}
Hence we deduce
that 
\begin{align}\label{area_delta}
	\mathcal A(\gamma)=\inf\{A(\psi;\Delta) \ | \ \psi\in C^{0,1}(\overline \Delta;\R^2)\text{ such that } \psi\circ\Phi^{-1}=\gamma\text{ on }\Su\},
\end{align}
where
$$A(\psi;\Delta):=\int_\Delta|\det(\nabla \psi)|dx.$$
Similarly, using a bi-Lipschitz map $\Psi$ between $D$ and $\Delta^+$, it is easy to see that 
\begin{align}\label{area_delta+}
	\mathcal A(\gamma)=\inf\{A(\psi;\Delta^+)\ | \ \psi\in C^{0,1}(\overline \Delta^+;\R^2)\text{ such that } \psi\circ\Psi^{-1}=\gamma\text{ on }\Su\},
\end{align}
where 
$$A(\psi;\Delta^+):=\int_{\Delta^+}|\det(\nabla \psi)|dx.$$

Let $\theta_1,\theta_2\in  \Su $ be two distinct points. We denote by $\widehat{\theta_1\theta_2}$ the unique counterclockwisely oriented arc of $\Su$ with endpoints $\theta_1$ and $\theta_2$, starting from $\theta_1$ and ending at $\theta_2$. In particular, $\widehat{\theta_1\theta_2}\cup \widehat{\theta_2\theta_1}=\Su$. We also denote by $-\widehat{\theta_1\theta_2}$  the clockwisely oriented arc starting from $\theta_1$ and ending at $\theta_2$; hence $-\widehat{\theta_1\theta_2}$ coincides with $\widehat{\theta_2\theta_1}$, but has opposite orientation.
 
\begin{definition}
{\rm	An oriented arc $c\subset \Su$ is an arc of the form $\widehat{\theta_1\theta_2}$ or $-\widehat{\theta_1\theta_2}$.
	Given two oriented arcs $c$ and $d$ we call $\gamma:c\rightarrow d$ an \textit{arcs homeomorphism} if $\gamma$ is one to one, bi-Lipschitz, and preserves orientation (namely, it maps the starting point of $c$ in the starting point of $d$).}
\end{definition}

Let $\sigma:\Su\rightarrow \Su$; By identifying the domain $\Su$ with the interval $[0,2\pi)$ we can consider a lifting $\alpha$ of $\sigma$, namely $\alpha:[0,2\pi)\rightarrow \R$, $\sigma(t)=e^{i\alpha(t)}$. 
\begin{definition}{\rm
We say that $\sigma:\Su\rightarrow \Su$ is a {\it weakly monotonic re-parametrization} of $\Su$ if the lifting $\alpha$ is non-decreasing. }
\end{definition}

Given two curves $\gamma_1,\gamma_2\in  C^{0,1}(\Su;\R^2)$,
we denote by $d_F(\gamma_1,\gamma_2)$ the Frechet distance between them, defined as
\begin{align}
	d_F(\gamma_1,\gamma_2):=\inf_\sigma\big(\max\{|\gamma_1(t)-\gamma_2\circ\sigma(t)|, \; t\in \Su\}\big),
\end{align}
where the infimum is computed among all weakly monotonic re-parametrizations  $\sigma:\Su\rightarrow \Su$. Of course, if there is a $\sigma$ such that $\gamma_1=\gamma_2\circ\sigma$ then $d_F(\gamma_1,\gamma_2)=0$. The converse is not always true.   The following fact is well-known; we sketch the proof for the reader convenience.
%

\begin{prop}\label{prop:continuity}
	Let $\gamma_n:\Su\rightarrow \R^2$, $n\geq1$, be a sequence of Lipschitz curves satisfying 
\begin{equation}\label{eqn:Bound Length}
\ell(\gamma_n):=\int_{\Su}|\dot\gamma_n|dt\leq C,\qquad \forall n\geq1,
\end{equation}
	for some positive constant $C$. Assume there is a Lipschitz curve $\gamma:\Su\rightarrow\R^2$ such that 
	$$d_F(\gamma_n,\gamma)\rightarrow 0\qquad \text{ as }n\rightarrow\infty.$$
	Then $\displaystyle \lim_{n\rightarrow\infty}\mathcal A(\gamma_n)=\mathcal A(\gamma)$. 
\end{prop}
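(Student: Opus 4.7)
The plan is to prove the two inequalities $\limsup_n \A(\gamma_n) \leq \A(\gamma)$ and $\liminf_n \A(\gamma_n) \geq \A(\gamma)$ by symmetric constructions on a thin boundary annulus, based on the following \emph{bridging lemma}: for any two Lipschitz curves $\beta_1, \beta_2 : \Su \to \R^2$ with $\eps := \sup_t|\beta_1(t) - \beta_2(t)|$ and $\max\{\ell(\beta_1), \ell(\beta_2)\} \leq C$, the map defined in polar coordinates on $A_\delta := \{1-\delta \leq |x| \leq 1\}$ by
\[
\Theta(re^{it}) := \frac{r-(1-\delta)}{\delta}\, \beta_1(t) + \frac{1-r}{\delta}\, \beta_2(t)
\]
is Lipschitz and realizes $\Theta(e^{it}) = \beta_1(t)$, $\Theta((1-\delta)e^{it}) = \beta_2(t)$. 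Using the polar identity $|\det \nabla \Theta|\,dx = |\partial_r \Theta \wedge \partial_t \Theta|\,dr\,dt$ together with $|\partial_r \Theta| \leq \eps/\delta$ and $\int_0^{2\pi} |\partial_t \Theta|\,dt \leq C$ for every $r$, the area of $\Theta$ on $A_\delta$ is at most $\eps\, C$, independently of $\delta$.

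For the upper bound, I would fix $\eta>0$ and a competitor $\psi \in C^{0,1}(D;\R^2)$ for $\gamma$ with $A(\psi) \leq \A(\gamma)+\eta$. Extract from the infimum in $d_F(\gamma_n,\gamma)$ a weakly monotone $\sigma_n$ with $\sup_t|\gamma_n(t)-\gamma(\sigma_n(t))|\to 0$, approximate it by a bi-Lipschitz homeomorphism $\tilde\sigma_n$ of $\Su$ with $\sup|\tilde\sigma_n - \sigma_n|\to 0$, and extend $\tilde\sigma_n$ radially to a bi-Lipschitz self-map $\Sigma_n$ of $D$. Define
\[
\psi_n(x) := \begin{cases} (\psi\circ\Sigma_n)\bigl(x/(1-\delta_n)\bigr) & \text{if } |x|\leq 1-\delta_n, \\ \Theta(x) & \text{if } 1-\delta_n \leq |x| \leq 1, \end{cases}
\]
with $\Theta$ the bridge on $A_{\delta_n}$ from $\gamma\circ\tilde\sigma_n$ at the inner circle to $\gamma_n$ at $\Su$. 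The inner piece has area exactly $A(\psi)$ by change of variables through $\Sigma_n$ and the radial rescaling, while the annular piece has area $o(1)$ by the bridging lemma. Since $\psi_n|_{\Su}=\gamma_n$, one concludes $\A(\gamma_n) \leq A(\psi_n) \leq A(\psi) + o(1)$, and sending first $n\to\infty$ then $\eta\to 0$ yields $\limsup_n \A(\gamma_n) \leq \A(\gamma)$. The lower bound is entirely symmetric: bridge almost-minimizers $\psi_n$ of $\A(\gamma_n)$ back to $\gamma$, using that $\ell(\gamma) \leq \liminf_n \ell(\gamma_n) \leq C$, which follows from Frechet convergence and lower semicontinuity of length.

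The hard part is the weakly monotone reparametrization: the $\sigma_n$ extracted from $d_F$ need not be injective nor Lipschitz, so $\gamma\circ\sigma_n$ is in general not Lipschitz and the bridging construction cannot be applied to $\sigma_n$ directly. I would regularize by taking $\tilde\sigma_n := \rho_{\kappa_n} * \bigl((1-\tau_n)\sigma_n + \tau_n\,\mathrm{id}\bigr)$ with mollification scale $\kappa_n \ll \tau_n \to 0$, producing a bi-Lipschitz $\tilde\sigma_n$ with $\sup|\tilde\sigma_n-\sigma_n|\to 0$; Lipschitz continuity of the limit $\gamma$ then gives $\sup|\gamma\circ\tilde\sigma_n - \gamma\circ\sigma_n| \leq \Lip(\gamma)\sup|\tilde\sigma_n-\sigma_n| \to 0$, so the bridging estimate survives the replacement. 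The standard invariance of $\A$ under bi-Lipschitz reparametrization of the boundary, obtained by composing any competitor with $\Sigma_n$ or $\Sigma_n^{-1}$, then closes the argument.
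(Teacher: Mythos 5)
Your proof is correct and follows the same high-level strategy as the paper's (shrink a competitor for one curve into a smaller disk, then fill the boundary annulus with a low-area bridge, symmetrize), but the technical execution of the bridge differs in a way worth noting. The paper uses a \emph{three-piece} decomposition of $D$: the rescaled competitor $\varphi_\delta$ on $B_{1-2\delta}$, a linear interpolation annulus between $\gamma$ and $\gamma_n\circ\sigma$, and an additional outer annulus that ``unwinds'' the reparametrization $\sigma$ by mapping $(\rho,\theta)\mapsto\gamma_n\bigl(e^{i(\frac{1-\rho}{\delta}\alpha(\theta)+\frac{\rho+\delta-1}{\delta}\theta)}\bigr)$. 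The cost of that third annulus is handled not by an estimate but by the observation that its image lies in the $\mathcal L^2$-null set $\gamma_n(\Su)$, so the Jacobian vanishes a.e.~there; the reparametrization $\sigma$ is therefore used directly, without regularization. You instead take a \emph{two-piece} decomposition and push the reparametrization into the inner piece by composing $\psi$ with a bi-Lipschitz self-map $\Sigma_n$ of $D$ extending $\tilde\sigma_n$; this requires mollifying $\sigma_n$ to a bi-Lipschitz $\tilde\sigma_n$ (which you should carry out on the lifting $\alpha_n$ rather than on $\sigma_n$ itself to make sense of the convex combination and mollification) and then invoking the change-of-variable identity $A(\psi\circ\Sigma_n)=A(\psi)$. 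Your route is arguably cleaner: the paper's outer annulus is only Lipschitz if $\sigma$ is Lipschitz, which the stated definition of weakly monotonic reparametrization does not guarantee, whereas your regularization makes every piece of the competitor manifestly Lipschitz. The price you pay is the approximation lemma for $\sigma_n$, which however is elementary and which you correctly identify as the ``hard part'' and address.
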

\begin{proof}
	Let us fix $n\in \mathbb N$. By definition of Frechet distance, there is a weakly monotonic reparametrization $\sigma:\Su\rightarrow \Su$ such that $\|\gamma_n\circ\sigma-\gamma\|_{L^\infty(\Su;\R^2)}\leq  d_F(\gamma_n,\gamma)+\frac1n$. Denote also by $\alpha:[0,2\pi)\rightarrow\R$ a continuous non-decreasing lifting of $\sigma$, namely $\sigma(t)=e^{i\alpha(t)}$. 
	
	Let $\varphi\in C^{0,1}(D;\R^2)$ be such that $\varphi\res\Su=\gamma$, and for $\delta\in(0,\frac12)$ fixed let  $\varphi_\delta(x):=\varphi\left(\frac{x}{1-2\delta}\right)$  be its rescaling on $B_{1-2\delta}(0)$. We define $\psi\in C^{0,1}(D;\R^2)$ as
	\begin{align}
		\psi(x)=\begin{cases}
			\varphi_\delta(x)&\text{if }|x|\leq 1-2\delta,\\
			\frac{1-\delta-|x|}{\delta}\gamma(\frac{x}{|x|})+\frac{|x|+2\delta-1}{\delta}(\gamma_n\circ\sigma)(\frac{x}{|x|})&\text{if }1-2\delta<|x|\leq 1-\delta\\
			\gamma_n\left(\exp\left(i\frac{1-|x|}{\delta}\alpha(t)+i\frac{|x|+\delta-1}{\delta}t\right)\right)&\text{if }1-\delta<|x|\leq 1,
		\end{cases}
	\end{align}  
where $t\in [0,2\pi)$ is introduced in such a way that  $\frac{x}{|x|}=e^{it}$.
	Passing in polar coordinates, straightforward computations shows that there is a constant $C>0$ independent of $n$ so that  
	
	\[
	|\partial_\rho\psi|\leq  C\delta^{-1}\|\gamma_n\circ\sigma-\gamma\|_{L^\infty}, \qquad \ \ |\partial_\theta\psi|\leq C(|\dot\gamma|+|\dot\gamma_n|)
	\] 
and so,  by  \eqref{eqn:Bound Length}
\[
\int_{B_{1-\delta}(0)\setminus B_{1-2\delta}(0)}|\det(\nabla \psi)|dx\leq  C\|\gamma_n\circ\sigma-\gamma\|_{L^\infty}\leq C\left(d_F(\gamma_n,\gamma)+\frac1n\right).
\]
Furthermore,  since $\psi(D\setminus B_{1-\delta})\subseteq  \gamma_n(\Su)$  it is straightforward to check that
$$\int_{D\setminus B_{1-\delta}(0)}|\det(\nabla \psi)|dx=0.$$
	Therefore, since $\psi\res\Su=\gamma_n$, we have
\[
\mathcal A(\gamma_n)\leq C\left(d_F(\gamma_n,\gamma)+\frac1n\right)+\int_{B_{1-\delta}(0)}|\det\nabla \varphi_\delta|dx=C\left(d_F(\gamma_n,\gamma)+\frac1n\right)+A( \varphi;D)
\]
where we exploited 
\[
\int_{B_{1-\delta}(0)}|\det\nabla \varphi_\delta|dx=\int_{B_{1}(0)}|\det\nabla \varphi|dx=A( \varphi;D).
\]
The arbitrariness of $\varphi$ implies that $\mathcal A(\gamma_n)\leq \mathcal A(\gamma)+o_n(1)$, where $o_n(1)\rightarrow 0$ as $n\rightarrow+\infty$. By switching the role of $\gamma$ and $\gamma_n$ we infer also $\mathcal A(\gamma)\leq \mathcal A(\gamma_n)+o'_n(1)$, where $o'_n(1)\rightarrow 0$ as $n\rightarrow+\infty$. Hence, passing to the limit as $n\rightarrow+\infty$ we get the thesis.
\end{proof}

From Proposition \ref{prop:continuity}  one readily infers that $\mathcal A(\gamma)$ is not sensible to negligible perturbation in Frechet distance.  In particular, we get the following Corollaries.
\begin{cor}\label{Distant0curvesHaveSameArea}
	Let $\gamma:  \Su \rightarrow\R^2$ be a Lipschitz curve. Then, for all Lipschitz curves  $\widehat \gamma: \Su\rightarrow\R^2$ with $d_F(\widehat{\gamma},\gamma)=0$ it holds $\mathcal A(\gamma)=\mathcal A(\widehat\gamma)$.
\end{cor}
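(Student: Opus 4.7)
The statement is an immediate corollary of Proposition \ref{prop:continuity}, so the plan is to reduce to that proposition by choosing a constant sequence. Given $\widehat{\gamma}$ with $d_F(\widehat{\gamma},\gamma)=0$, I would simply set $\gamma_n := \widehat{\gamma}$ for every $n\geq 1$. Then trivially $d_F(\gamma_n,\gamma)=0 \to 0$ as $n\to\infty$, and the length bound \eqref{eqn:Bound Length} is verified with the constant $C:=\ell(\widehat{\gamma})<+\infty$, which is finite because $\widehat{\gamma}$ is Lipschitz on the compact set $\Su$.

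Once these two hypotheses are checked, Proposition \ref{prop:continuity} applies directly and gives
\begin{equation*}
\mathcal{A}(\widehat{\gamma}) \;=\; \lim_{n\to\infty} \mathcal{A}(\gamma_n) \;=\; \mathcal{A}(\gamma),
\end{equation*}
which is exactly the claim. There is essentially no obstacle here; the only thing to notice is that Proposition \ref{prop:continuity} is stated in a way that is strong enough to absorb the fact that $d_F=0$ does not force the two curves to coincide up to reparametrization (the proof of Proposition \ref{prop:continuity} builds an interpolating annulus whose contribution to the area vanishes as soon as $\gamma_n\circ\sigma\to\gamma$ uniformly, regardless of whether equality is reached in the limit). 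Hence the corollary follows with no additional argument.
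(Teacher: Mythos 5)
Your proof is correct and follows exactly the paper's own argument: choose the constant sequence $\gamma_n := \widehat\gamma$ and apply Proposition \ref{prop:continuity}. The extra verification of the length bound and the remark about $d_F=0$ not forcing a reparametrization are fine, but no additional argument was needed.
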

\begin{proof}
	It is sufficient to choose $\gamma_n=\widehat\gamma$ in Proposition \ref{prop:continuity}, for all $n\geq0$.
\end{proof}

\begin{cor}\label{cor:area_reparametrization}
	Let $\gamma: \Su\rightarrow \R^2$ be a Lipschitz curve, and let $\sigma:\Su\rightarrow \Su$ be a bi-Lipschitz homeomorphism. Then $\mathcal A(\gamma)=\mathcal A(\gamma\circ \sigma)$.  
\end{cor}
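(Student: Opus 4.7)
The plan is to reduce the statement to two manipulations already available from the preceding material: Corollary \ref{Distant0curvesHaveSameArea} when $\sigma$ preserves orientation, and an explicit change of variables by a bi-Lipschitz involution of $D$ when $\sigma$ reverses it. Since a bi-Lipschitz homeomorphism $\sigma:\Su\to\Su$ admits a strictly monotonic continuous lift, exactly one of these two situations occurs.

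In the orientation-preserving case I would observe that $\sigma^{-1}$ is itself a bi-Lipschitz homeomorphism with non-decreasing lift, hence a weakly monotonic reparametrization in the sense recalled above. Plugging it into the definition of $d_F(\gamma,\gamma\circ\sigma)$ immediately gives $d_F(\gamma,\gamma\circ\sigma)=0$, so Corollary \ref{Distant0curvesHaveSameArea} (applied with $\widehat\gamma:=\gamma\circ\sigma$) yields $\mathcal A(\gamma)=\mathcal A(\gamma\circ\sigma)$.

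In the orientation-reversing case, writing $\rho(e^{i\theta}):=e^{-i\theta}$, the map $\tau:=\sigma\circ\rho^{-1}$ is orientation-preserving bi-Lipschitz and $\gamma\circ\sigma=\gamma\circ\tau\circ\rho$; in view of the orientation-preserving case already treated, the claim reduces to the auxiliary identity $\mathcal A(\eta)=\mathcal A(\eta\circ\rho)$ for an arbitrary Lipschitz curve $\eta$. To establish this I would extend $\rho$ to the bi-Lipschitz involution $R:D\to D$, $R(x_1,x_2):=(x_1,-x_2)$, which satisfies $|\det\nabla R|\equiv 1$ and restricts to $\rho$ on $\Su$. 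For any admissible $\psi$ extending $\eta$, the composition $\psi\circ R$ extends $\eta\circ\rho$ and the change-of-variables formula gives $A(\psi\circ R)=A(\psi)$; taking the infimum on both sides and then running the same argument with $R^{-1}=R$ yields $\mathcal A(\eta\circ\rho)=\mathcal A(\eta)$. The main, though minor, obstacle is precisely this orientation-reversing step: it cannot be absorbed into Corollary \ref{Distant0curvesHaveSameArea} because Frechet distance is defined only through orientation-preserving reparametrizations, and the explicit involution $R$ circumvents the difficulty by performing the orientation change inside $D$, where the absolute Jacobian in the definition of $A$ guarantees invariance.
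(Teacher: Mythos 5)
Your proof is correct. The paper does not actually give a proof of this corollary: the preceding text only asserts that it follows from Proposition~\ref{prop:continuity} (via Corollary~\ref{Distant0curvesHaveSameArea}), but that reduction covers only the orientation-preserving case, since the Frechet distance is defined exclusively through weakly monotonic (non-decreasing lift) reparametrizations. You correctly identify this gap and close it with the reflection $R(x_1,x_2)=(x_1,-x_2)$ together with the change of variables, which is the same Jacobian-invariance mechanism the paper uses to justify \eqref{area_delta} and \eqref{area_delta+}. An alternative route, closer in spirit to what the paper already does with $\Phi$ in \eqref{Phi_def}, would be to extend $\sigma$ directly to a bi-Lipschitz self-homeomorphism of $D$ (e.g.\ by radial extension) and invoke the change of variables once, without distinguishing orientations; this avoids the case split at the cost of a mildly nontrivial bi-Lipschitz extension step. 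Either way, your two-case decomposition is clean, and the orientation-reversing step is exactly what the paper's ``one readily infers'' elides.
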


\begin{definition}[Concatenation of curves] \label{def28}{\rm
Let $\gamma_1,\gamma_2\in  C^{0,1}(\Su;\R^2)$ be two curves and assume that there is $\theta\in \Su$ such that $\gamma_1(\theta)=\gamma_2(\theta)$. We define the concatenation $\gamma_1\star\gamma_2$ as follows: If  $\theta=e^{i\alpha}$, we set
$$\gamma_1\star\gamma_2(t):=\begin{cases}
\gamma_1(e^{i2(t-\alpha)})&\text{if }t\in[\alpha,\alpha+\pi)\\
\gamma_2(e^{i2(t-\pi-\alpha)})&\text{if }t\in[\alpha+\pi,\alpha+2\pi). 
\end{cases}$$
Similarly, if $\gamma_1,\gamma_2\in  C^{0,1}(\Su;\R^2)$ are curves such that there are $\theta_1=e^{i\alpha_1},\;\theta_2=e^{i\alpha_2}\in \Su$ such that $\gamma_1(\theta_1)=\gamma_2(\theta_2)$, we can define the concatenation $\gamma$ of them simply as
$$\gamma:=\gamma_1\star \widehat \gamma_2,$$
where $\widehat \gamma_2(t)=\gamma_2(e^{i(t-\alpha_1+\alpha_2)})$, so that $\widehat \gamma_2(\theta_1)=\gamma_1(\theta_1)$. In this case, we still denote the concatenation by $\gamma_1\star\gamma_2$, when there is no risk of confusion.}
\end{definition}

\begin{lemma}\label{lem_subadd}
	Let $\gamma_1,\gamma_2\in C^{0,1}(\Su;\R^2)$ curves such that  $\gamma_1(\theta_1)=\gamma_2(\theta_2)$, for some $\theta_1,\theta_2\in \Su$. Then $\gamma:=\gamma_1\star\gamma_2$ is a closed Lipschitz curve satisfying 
	$$\mathcal A(\gamma)\leq\mathcal A(\gamma_1)+\mathcal A(\gamma_2).$$
\end{lemma}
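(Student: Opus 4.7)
The plan is to construct an explicit near-optimal competitor for $\gamma := \gamma_1\star\gamma_2$ on $D$ by gluing suitable modifications of near-optimal competitors for $\gamma_1$ and $\gamma_2$ along the horizontal diameter $H:=[-1,1]\times\{0\}$ separating the upper and lower half-disks $D^\pm := D\cap\{\pm x_2\geq 0\}$. After applying Corollary \ref{cor:area_reparametrization} I may assume $\theta_1=\theta_2=1$, so that $\gamma_1(1)=\gamma_2(1)=p$ for a common base point $p$. The naive attempt of transplanting competitors $\psi_i$ for $\gamma_i$ onto $D^\pm$ fails because their traces on $H$ will not coincide in general; the remedy is to first replace each $\psi_i$ by a competitor whose boundary datum is constantly equal to $p$ on an entire semicircle.

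Introduce the auxiliary Lipschitz curves
\[
\hat\gamma_1(e^{it}):=\begin{cases}\gamma_1(e^{i2t}),&t\in[0,\pi],\\ p,&t\in[\pi,2\pi],\end{cases}\qquad \hat\gamma_2(e^{it}):=\begin{cases} p,&t\in[0,\pi],\\ \gamma_2(e^{i2(t-\pi)}),&t\in[\pi,2\pi],\end{cases}
\]
well-defined and continuous thanks to $\gamma_i(1)=p$. The first step is to prove the folding estimate $\mathcal{A}(\hat\gamma_i)\leq \mathcal{A}(\gamma_i)$. For $i=1$, given a competitor $\phi\in C^{0,1}(D;\R^2)$ for $\gamma_1$, I precompose it with the Lipschitz self-map $R:D\to D$ defined in polar coordinates by $R(re^{it}):=re^{i\varphi(t)}$, where $\varphi(t):=2t$ on $[0,\pi]$ and $\varphi(t)\equiv 2\pi$ on $[\pi,2\pi]$. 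A short calculation gives $|\det \nabla R|=2$ on the upper half-disk (on which $R$ acts as a bi-Lipschitz bijection onto $D$) and $\det\nabla R\equiv 0$ on the lower half-disk (on which $R$ collapses everything onto the radius $[0,1]$). The change of variables formula then yields $A(\phi\circ R;D)=A(\phi;D)$, while $(\phi\circ R)\res\Su=\hat\gamma_1$ by construction; the case $i=2$ is symmetric.

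For the gluing step, given $\varepsilon>0$, I choose $\tilde\phi_i\in C^{0,1}(D;\R^2)$ with $\tilde\phi_i\res\Su=\hat\gamma_i$ and $A(\tilde\phi_i)\leq\mathcal{A}(\gamma_i)+\varepsilon$, together with bi-Lipschitz homeomorphisms $F_\pm:D\to D^\pm$ sending the upper (resp.\ lower) semicircle of $\Su$ onto the circular portion of $\partial D^\pm$ and the complementary semicircle onto $H$. Setting
\[
\psi(x):=\begin{cases}\tilde\phi_1\circ F_+^{-1}(x),&x\in D^+,\\ \tilde\phi_2\circ F_-^{-1}(x),&x\in D^-,\end{cases}
\]
both halves equal $p$ on $H$ by the defining property of $\hat\gamma_i$, so $\psi$ is Lipschitz on $D$. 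Moreover $\psi\res\Su$ agrees with $\gamma$ up to a bi-Lipschitz reparametrization, and the change of variables applied separately on each half yields
\[
A(\psi;D)=A(\tilde\phi_1;D)+A(\tilde\phi_2;D)\leq \mathcal{A}(\gamma_1)+\mathcal{A}(\gamma_2)+2\varepsilon.
\]
Invoking Corollary \ref{cor:area_reparametrization} to absorb the reparametrization and letting $\varepsilon\to 0$ completes the proof.

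The main obstacle is the folding step: since $R$ is merely Lipschitz, not bi-Lipschitz—it collapses an entire half-disk onto the one-dimensional set $[0,1]$—one must verify carefully both its Lipschitz regularity across the diameter $\{x_2=0\}$ where its gradient jumps, and confirm that $\det\nabla R\equiv 0$ precisely on the degenerate side, so that the $R$-precomposition leaves the area functional invariant. Everything else reduces to standard change of variables and the reparametrization invariance already established in Corollary \ref{cor:area_reparametrization}.
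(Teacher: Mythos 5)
Your proof is correct and rests on the same core idea as the paper's: glue two competitors along a diameter after a degenerate Lipschitz precomposition that forces the two traces to agree on the interface. The difference is mostly organizational. The paper transfers the problem to the bi-Lipschitz-equivalent diamond $\Delta$ and in one step glues $\varphi_1\circ\Phi_{\Delta^+,\Delta}$ with $\varphi_2\circ\Phi_{\Delta^-,\Delta}$, where $\Phi_{\Delta^\pm,\Delta}:\Delta^\pm\to\Delta$ is a Lipschitz surjection collapsing the common segment $[-1,1]\times\{0\}$ to a single marked vertex (so both halves are automatically constant on the interface). You instead isolate a clean intermediate estimate $\mathcal A(\hat\gamma_i)\le\mathcal A(\gamma_i)$, obtained by precomposing with the map $R$ that collapses a half-disk to a radius, and only then glue with honest bi-Lipschitz homeomorphisms $F_\pm:D\to D^\pm$. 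Your two-step version is a bit more modular and avoids the passage to $\Delta$, but $R$ and $\Phi_{\Delta^\pm,\Delta}$ are the same kind of map, so this is essentially the paper's route rearranged. One small inaccuracy in the write-up: $R|_{D^+}$ is \emph{not} a bi-Lipschitz bijection onto $D$ — its inverse $se^{i\theta}\mapsto se^{i\theta/2}$ (with $\theta\in[0,2\pi)$) is discontinuous across the slit $(0,1]\times\{0\}$, since the two boundary arcs of $D^+$ on the real axis are folded onto that slit. This does not affect the argument: the identity $A(\phi\circ R;D^+)=A(\phi;D)$ only requires $R|_{D^+}$ to be Lipschitz and a.e.\ injective, which it is.
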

\begin{proof}
	Let $T$ denote the triangle in $\R^2$ with vertices the three points $(0,1)$, $(-2,-1)$, $(2,-1)$.
	The map $\Phi_{T,\Delta}:\overline T\rightarrow\overline\Delta$ defined as
	\begin{align}
		\Phi_{T,\Delta}(x,y)=\begin{cases}
			(x_1,x_2)&\text{if }x_2\geq0,\\
			\left(x_1\frac{1+x_2}{1-x_2},x_2\right)&\text{if }x_2<0,
		\end{cases}
	\end{align}
	is Lipschitz continuous and one-to-one between the interior of $T$ and $\Delta$. Further, the whole segment with vertices $(-2,-1)$ and $(2,-1)$ is mapped to $(-1,0)$. If $\Phi_{\Delta^+,T}:\Delta^+\rightarrow T$ is the homothety $(x,y)\mapsto  (2x,2y-1)$ then $\Phi_{\Delta^+,\Delta}:=\Phi_{T,\Delta}\circ\Phi_{\Delta^+,T}$ is a Lipschitz map sending $\Delta^+$ onto $\Delta$ and the segment $[-1,1]\times\{0\}$ to $(0,-1)$. Similarly we define $\Phi_{\Delta^-,\Delta}:\Delta^-\rightarrow\Delta$.
	
	We now assume (up composing $\gamma_i$ with $\Phi$ in \eqref{Phi_def}) that $\gamma_i:\partial \Delta\rightarrow \R^2$, $i=1,2$, and moreover, without loss of generality, we suppose that $\theta_1=(0,-1)$ and $\theta_2=(0,1)$. If now $\varphi_1$ and $\varphi_2$ are Lipschitz maps such that $\varphi_i\res\partial \Delta=\gamma_i$, we define
	\begin{align}
	\varphi(x):=\begin{cases}
				\varphi_1\circ\Phi_{\Delta^+,\Delta}(x)&\text{if }x\in \overline{\Delta^+},\\
				\varphi_2\circ\Phi_{\Delta^-,\Delta}(x)&\text{if }x\in \overline{\Delta^-},
	\end{cases}
	\end{align}
	and it turns out that $\varphi$ is Lipschitz continuous on $\overline{\Delta}$, and its boundary datum is $\gamma=\gamma_1\star\gamma_2$.
	
	Furthermore, we readily infer
	$$\mathcal A(\gamma)\leq A(\varphi,\Delta)=A(\varphi_1,\Delta)+A(\varphi_2,\Delta),$$
	so the thesis follows by arbitrariness of $\varphi_i$, $i=1,2$ and \eqref{area_delta}.
\end{proof}

\begin{definition}[Null curves]\label{def:null_curves}
	{\rm	Let $\gamma:\Su\rightarrow\R^2$ be a Lipschitz curve. We say that $\gamma$ is \textit{$1$-null} if there are two distinct points $\theta_1,\theta_2\in \Su$ and an arcs homeomorphism $\sigma:\widehat{\theta_1\theta_2}\rightarrow-\widehat{\theta_1\theta_2}$ such that 
		$$\gamma(t)=\gamma(\sigma(t))\qquad \forall t\in \widehat{\theta_1\theta_2}.$$
		We say that a Lipschitz curve $\gamma:\mathbb S^1\rightarrow\R^2$ is \textit{finitely null} if it is the concatenation of finitely many $1$-null curves. 
		Finally we say that a Lipschitz curve $\gamma:\Su\rightarrow\R^2$ is \textit{null} if it is the uniform limit of finitely null curves.}
\end{definition}

\begin{lemma}\label{lem_null0}
	Let $\gamma:\Su\rightarrow\R^2$ be a Lipschitz null curve. Then $\mathcal A(\gamma)=0$.
\end{lemma}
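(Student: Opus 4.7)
The plan is to handle the three nested classes of Definition~\ref{def:null_curves} in turn, building up from $1$-null to finitely null to null curves.

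\emph{Step 1 ($1$-null case).} For a $1$-null curve the defining identity $\gamma(t)=\gamma(\sigma(t))$ forces
\[
\gamma(\Su)=\gamma(\widehat{\theta_1\theta_2})\cup\gamma(-\widehat{\theta_1\theta_2})=\gamma(\widehat{\theta_1\theta_2}),
\]
so the image is a Lipschitz image of an arc and in particular has $\mathcal L^2$-measure zero. I factor $\gamma$ through this $1$-dimensional image and then extend the factorization to $D$. Pick a bi-Lipschitz parametrization $\tilde\gamma:[0,1]\to\R^2$ of $\gamma(\widehat{\theta_1\theta_2})$; define $h:\Su\to[0,1]$ on $\widehat{\theta_1\theta_2}$ as the corresponding parameter, and on $-\widehat{\theta_1\theta_2}$ by $h(s):=h(\sigma^{-1}(s))$. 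Since $\sigma$ is orientation preserving it fixes $\theta_1,\theta_2$, so $h$ is continuous at the endpoints; $h$ is Lipschitz on each arc separately, and for points on different arcs the shorter geodesic on $\Su$ necessarily passes through $\theta_1$ or $\theta_2$, which gives the global Lipschitz estimate. By construction $\gamma=\tilde\gamma\circ h$. I then extend $h$ to a Lipschitz $H:D\to[0,1]$ by McShane's theorem and define $\psi:=\tilde\gamma\circ H$; this lies in $C^{0,1}(D;\R^2)$, satisfies $\psi\res\Su=\gamma$, and its image is contained in $\tilde\gamma([0,1])$, a Lebesgue null set. The area formula then yields $|\det\nabla\psi|=0$ almost everywhere, whence $A(\psi;D)=0$ and $\mathcal A(\gamma)=0$.

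\emph{Steps 2 and 3.} If $\gamma=\gamma_1\star\cdots\star\gamma_k$ is a concatenation of $1$-null curves, iterated application of the subadditivity Lemma~\ref{lem_subadd} together with Step~1 gives
\[
\mathcal A(\gamma)\le \sum_{i=1}^k \mathcal A(\gamma_i)=0.
\]
For a general null $\gamma$, I pick finitely null curves $\gamma_n\to\gamma$ uniformly, so $d_F(\gamma_n,\gamma)\to 0$; Proposition~\ref{prop:continuity} combined with $\mathcal A(\gamma_n)=0$ then delivers $\mathcal A(\gamma)=\lim_n\mathcal A(\gamma_n)=0$.

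The principal technical wrinkle I foresee is that Proposition~\ref{prop:continuity} requires the uniform length bound $\ell(\gamma_n)\le C$, which is not automatic from uniform convergence. I expect this to be dealt with either by reading such a bound implicitly into Definition~\ref{def:null_curves}, or by a preliminary reduction that replaces the given approximation by one of controlled length (for instance via arclength reparametrization and trimming of redundant back-and-forth segments) while preserving the finitely-null property. Aside from this normalization, the argument is a direct combination of the factorization trick in Step~1 with the subadditivity and continuity properties already established in the preliminaries.
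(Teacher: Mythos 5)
Your overall structure --- establish the $1$-null case by exhibiting an explicit zero-area competitor, pass to finitely null curves via Lemma~\ref{lem_subadd}, then to general null curves via Proposition~\ref{prop:continuity} --- is exactly the paper's. The mechanism you propose in Step~1 (factor $\gamma$ through a one-dimensional parameter and McShane-extend) is a sound alternative to the paper's construction, which transfers to the domain $\Delta$ and defines a map depending only on $x_1$; both produce a competitor with rank-one gradient.

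There is, however, a flaw in Step~1 as written. A bi-Lipschitz parametrization $\tilde\gamma:[0,1]\to\R^2$ of the \emph{image} $\gamma(\widehat{\theta_1\theta_2})$ need not exist: $\gamma$ restricted to $\widehat{\theta_1\theta_2}$ is only Lipschitz and may self-intersect, so its image can be a figure-eight or a tree-like set, not homeomorphic to an interval, and then the map $h$ you define via the corresponding parameter is not well-posed. The correct object to parametrize is the path, not the image: take a bi-Lipschitz homeomorphism $\sigma^+:[0,1]\to\widehat{\theta_1\theta_2}$, set $\tilde\gamma:=\gamma\circ\sigma^+$ (merely Lipschitz, possibly non-injective), and define $h:=(\sigma^+)^{-1}$ on $\widehat{\theta_1\theta_2}$ and $h:=(\sigma^+)^{-1}\circ\sigma^{-1}$ on $-\widehat{\theta_1\theta_2}$. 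Your geodesic-through-$\theta_1,\theta_2$ argument still yields that $h$ is globally Lipschitz, and $\psi:=\tilde\gamma\circ H$, with $H$ a McShane extension of $h$, has $\nabla\psi=\tilde\gamma'(H)\,\nabla H$ of rank at most one pointwise, so $\det\nabla\psi=0$ directly; you do not even need the measure-of-the-image argument. With this correction Step~1 is sound and the proof mirrors the paper's.

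Finally, you are right to flag the missing length bound: Proposition~\ref{prop:continuity} requires $\ell(\gamma_n)\le C$, and the paper's own proof of this lemma invokes the proposition without verifying that hypothesis, so Definition~\ref{def:null_curves} must be read as implicitly requiring a uniformly bounded-length approximating sequence. Your argument is on the same footing as the published one in this respect.
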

\begin{proof}
	\textit{Step 1:} Assume in this step that $\gamma$ is $1$-null. Hence  we can consider Lipschitz homeomorphisms $\sigma^\pm:[-1,1]\rightarrow \pm\widehat{\theta_1\theta_2}$ such that 
	$$\sigma^\pm(-1)=\theta_1,\qquad \sigma^\pm(1)=\theta_2,$$
	and such that 
	$$\gamma\circ\sigma^+(t)=\gamma\circ\sigma^-(t)\qquad\text{for all }t\in [-1,1].$$
	Indeed, once $\sigma^+$ has been defined, if $\sigma$ is the paramentrization as in Definition \ref{def:null_curves}, it is sufficient to take $\sigma^-:=\sigma\circ \sigma^+$.
	We define $\phi:\Delta\rightarrow\R^2$ as
	$$\phi(x_1,x_2)=\gamma\circ\sigma^+(x_1),$$
	which satisfies $\phi\res \partial \Delta=\gamma\circ \widetilde\sigma$, where $$
	\widetilde\sigma(x_1,x_2)=\begin{cases}
		\sigma^+(x_1)&\text{ for }(x_1,x_2)\in D_1^+\\
		\sigma^-(x_1)	&\text{ for }(x_1,x_2)\in D_1^-\\
		\theta_1&\text{ for }(x_1,x_2)=(-1,0)\\
		\theta_2&\text{ for }(x_1,x_2)=(1,0)
	\end{cases}$$
	is a bi-Lipschitz homeomorphism between $\partial \Delta$ and $\Su$.
	It follows that 
	\begin{align}
		D_{x_1} \phi(x_1,x_2)=(\gamma\circ\sigma^+)'(x_1),\qquad D_{x_2} \phi(x_1,x_2)=0,
	\end{align}
	so that $A(\phi;\Delta)=0$. As a  consequence, if $\Phi$ is the map in \eqref{Phi_def}, it follows that  
	since $\phi\circ \Phi^{-1}=\gamma\circ \widetilde\sigma\circ \Phi^{-1}$ on $\Su$, we have $d_F(\phi\circ \Phi^{-1}\res\Su, \gamma)=0$, and we infer $\mathcal A(\gamma)=0$ from \eqref{area_delta} and Corollary \ref{Distant0curvesHaveSameArea} .
%
%
%
%
%
%
%
	
	\textit{Step 2:} Let $\gamma_1:\Su\rightarrow\R^2$ be a Lipschitz curve such that $\mathcal A(\gamma_1)=0$, and let $\gamma_2$ be $1$-null and such that  there exists $\theta\in \Su$ with $\gamma_1(\theta)=\gamma_2(\theta)$. Then $\gamma:=\gamma_1\star\gamma_2$ satisfies $\mathcal A(\gamma)=0$ by Lemma \ref{lem_subadd}.
	This shows, by induction, that any finitely null curve $\gamma$ enjoys $\mathcal A(\gamma)=0$. 
	
	Finally, let $\gamma$ be a null curve and let $\gamma_n$ be finitely null curves tending uniformly to $\gamma$. Since uniform convergence implies $d_F(\gamma_n,\gamma)\rightarrow0$, the thesis follows from Proposition \ref{prop:continuity}. 
\end{proof}

\begin{prop}\label{prop:nullcurve}
	Let $\gamma\in C^{0,1}(\Su;\R^2)$ be a curve and let $\widehat \gamma\in C^{0,1}(\Su;\R^2)$ be a null curve such that $\gamma(\theta)=\widehat\gamma(\widehat\theta)$. Then 
	$$\mathcal A(\gamma\star\widehat\gamma)=\mathcal A(\gamma).$$
\end{prop}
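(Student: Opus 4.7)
My plan is to prove the two inequalities separately. The inequality $\mathcal{A}(\gamma \star \widehat{\gamma}) \leq \mathcal{A}(\gamma)$ is immediate from Lemma \ref{lem_subadd} (subadditivity) and Lemma \ref{lem_null0} (null curves have zero area):
\[
\mathcal{A}(\gamma \star \widehat{\gamma}) \leq \mathcal{A}(\gamma) + \mathcal{A}(\widehat{\gamma}) = \mathcal{A}(\gamma).
\]

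For the converse inequality $\mathcal{A}(\gamma)\leq\mathcal{A}(\gamma\star\widehat\gamma)$, I plan a gluing construction. Fix $\epsilon>0$ and take a Lipschitz extension $\psi$ of $\gamma\star\widehat\gamma$ on $D$ with $A(\psi)\leq\mathcal{A}(\gamma\star\widehat\gamma)+\epsilon$. A first observation is that the reversed curve $\widehat\gamma^{-1}$ is again null: the $1$-null relation of Definition \ref{def:null_curves} is symmetric in the two matched arcs, so it survives orientation reversal, and reversal commutes with uniform convergence. Hence Lemma \ref{lem_null0} produces a Lipschitz extension $\widehat\phi$ of $\widehat\gamma^{-1}$ on a second copy $D'$ of $D$ with $A(\widehat\phi)\leq\epsilon$. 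I then glue $D$ to $D'$ by identifying the $\widehat\gamma$-arc $A_-\subset\partial D$ with $\partial D'$ via the unique orientation-reversing identification under which $\psi$ and $\widehat\phi$ agree pointwise on the glued arcs; both endpoints of $A_-$ collapse onto the base point of $\partial D'$. On the quotient $X=(D\sqcup D')/{\sim}$ the piecewise map $\widetilde\psi$ equal to $\psi$ on the first factor and to $\widehat\phi$ on the second is Lipschitz with $A(\widetilde\psi)=A(\psi)+A(\widehat\phi)\leq\mathcal{A}(\gamma\star\widehat\gamma)+2\epsilon$, and its boundary is the $\gamma$-arc $A_+$ with its two endpoints identified---a Jordan curve carrying $\gamma$ up to Lipschitz reparametrization. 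Composing with a bi-Lipschitz parametrization $\Theta\colon D\to X$ and invoking Corollary \ref{cor:area_reparametrization}, I would then obtain $\mathcal{A}(\gamma)\leq A(\widetilde\psi\circ\Theta)=A(\widetilde\psi)\leq\mathcal{A}(\gamma\star\widehat\gamma)+2\epsilon$; letting $\epsilon\to 0$ closes the argument.

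The hardest step will be producing the bi-Lipschitz $\Theta$, since the identification creates a potential pinch where the two endpoints of $A_-$ and the base of $\partial D'$ meet, and $X$ is not a manifold there. I plan to resolve this by a small-slit regularization inside the $D$-factor: a short cut emanating from the pinch opens $X$ back into a genuine Lipschitz disk, at the cost of appending to the boundary an out-and-back excursion of $\psi$ along the slit. That excursion is $1$-null (hence null) by Definition \ref{def:null_curves}, so the slit-opened boundary is at Frechet distance $O(\text{slit length})$ from the clean reparametrization of $\gamma$, and Proposition \ref{prop:continuity} then lets me pass to the limit in the slit length to recover the bound on $\mathcal{A}(\gamma)$.
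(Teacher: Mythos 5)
Your proof of $\mathcal A(\gamma\star\widehat\gamma)\leq\mathcal A(\gamma)$ coincides with the paper's. For the reverse inequality you take a genuinely different route. The paper's Step 1 caps the $\widehat\gamma$-arc with a \emph{half-disk} $\Delta^-$ carrying the zero-Jacobian extension $\varphi^-(x_1,x_2)=\widehat\gamma\circ\widetilde\sigma(|x_1|+|x_2|)$; $\Delta^-$ and $\Delta^+$ then meet along a shared diameter, so the identification is arc-to-arc with no endpoint collapse. That construction leans on the $1$-null structure: the diameter carries the out-and-back trace of the \emph{first half} of $\widehat\gamma$, which equals all of $\widehat\gamma$ precisely because $\widehat\gamma$ is $1$-null. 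Step 2 then reduces a general null $\widehat\gamma$ to this case via $\gamma\star\widehat\gamma\star\widehat\gamma^{-1}$. You instead cap with a \emph{full} disk $D'$ carrying an $\varepsilon$-small-area extension of $\widehat\gamma^{-1}$, which forces an arc-to-circle identification and hence the pinch; in exchange you get a one-step argument that works directly for any null $\widehat\gamma$, with no $\widehat\gamma\star\widehat\gamma^{-1}$ detour.

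Your worry about the pinch is legitimate, but it resolves more simply than you suggest. Three half-disks meet at $p$ — one at each endpoint of $A_-$ in $D$ and one at the base of $\partial D'$ — glued pairwise along radii, so the total cone angle at $p$ is $3\pi$. A cone of angle $3\pi$ is bi-Lipschitz to a half-disk via the polar rescaling $(r,\phi)\mapsto(r,\phi/3)$, so a bi-Lipschitz $\Theta\colon D\to X$ exists outright and no limiting argument is required. The slit you sketch, on the other hand, does not help: a cut emanating from $p$ cannot split the vertex, and after opening it you are left with two sectors joined only at $p$, a worse non-manifold singularity. If you do prefer a regularization over the cone-angle observation, the correct move is to \emph{excise} a small cone-neighborhood of $p$ of radius $\delta$; the remainder is a genuine Lipschitz disk, its boundary trace lies at Frechet distance $O(\delta)$ from $\gamma$ with uniformly bounded length, and Proposition \ref{prop:continuity} then closes the argument as you intend.
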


\begin{proof}
\textit{Step 1:} Assume that $\widehat\gamma $ is $1$-null as in Definition \ref{def:null_curves}, and that moreover $\theta_1=\widehat\theta$.  Let $\widetilde\sigma:[-1,0]\rightarrow \widehat{\theta_1\theta_2}$ be a bi-Lipschitz homeomorphism, and let us extend it on $[-1,1]$ in order to have $\widetilde\sigma(t)=\widetilde\sigma(-t)$ for all $t\in[0,1]$. Then define the Lipschitz map $\varphi^-:\Delta^-\rightarrow \R^2$ as
$\varphi^-(x_1,x_2):= \widehat\gamma\circ\widetilde\sigma(  |x_1|+|x_2|)$; notice that $\varphi^-(x)= \widehat\gamma(\widehat\theta)=\gamma(\theta)$ for all $x\in \partial \Delta^-$ with $x_2<0$.  Thence
\begin{equation}
\nabla \varphi^-(x)=\left(\begin{array}{c   }
\frac{x_1}{|x_1|} (\widehat\gamma\circ\widetilde\sigma)'(  |x_1|+|x_2|)\\
\frac{x_2}{|x_2|} (\widehat\gamma\circ\widetilde\sigma)'(  |x_1|+|x_2|)
\end{array}\right)
\end{equation}
and thus,  being the two rows of the matrix $\nabla \varphi^-(x)$ linearly dependent,  $|\det(\nabla\varphi^-)|=0$. 

Let $\varphi^+:\overline\Delta^+\rightarrow \R^2$ be any Lipschitz map satisfying $\varphi^+(x_1,0)=\widehat\gamma\circ\widetilde\sigma(|x_1|)=\varphi^-(x_1,0)$ and $\varphi^+(x_1,x_2)=\gamma(s(x_1))$, where $s:[-1,1]\rightarrow \Su$ is any Lipschitz homomorphism satisfying $s(-1)=s(1)=\theta$. In this way the map
$$\varphi(x):=\begin{cases}
	\varphi^+(x)&\text{if }x\in \overline\Delta\cap\{x_2\geq0\}\\
	\varphi^-(x)&\text{if }x\in \overline\Delta\cap\{x_2<0\},
\end{cases}
$$
is Lipschitz from $\overline\Delta$ to $\R^2$ and $\varphi\res\partial \Delta$ is a reparametrization of $\gamma$.

Now, $\varphi^+$ has, as boundary datum $\varphi^+\res\partial\Delta^+$, a Lipschitz reparamentrization of $\gamma\star\widehat\gamma$. Thus
\begin{align}
	\mathcal A(\gamma)\leq A(\varphi;\Delta)=\int_{\Delta^+}|\det(\nabla \varphi^+)|dx=A(\varphi^+;\Delta^+).
\end{align}
So, by arbitrariness of $\varphi^+$ and from \eqref{area_delta+} we infer $\mathcal A(\gamma)\leq \mathcal A(\gamma\star\widehat\gamma)$.
The opposite inequality instead follows from Lemmas \ref{lem_subadd} and \ref{lem_null0}. The thesis is achieved in the case that $\widehat \gamma$ is $1$-null and such that $\theta_1=\widehat\theta$.

\textit{Step 2:} Let $\widehat \gamma$ be arbitrary. Then from Lemma \ref{lem_subadd} we get
$$\mathcal A(\gamma\star\widehat\gamma\star \widehat\gamma^{-1})\leq \mathcal A(\gamma\star\widehat\gamma)+\mathcal A(\widehat\gamma^{-1})= \mathcal A(\gamma\star\widehat\gamma),$$ 
where the equality follows from Lemma \ref{lem_null0}. Now, $\widehat\gamma\star \widehat\gamma^{-1}$ is $1$-null and satisfies the hypothesis of Step 1, so from the previous inequality we get $\mathcal A(\gamma)\leq  \mathcal A(\gamma\star\widehat\gamma)$. We then conclude the opposite inquality as in Step 1.
 \end{proof}

\subsection{Classes of curves} \label{sec:classes}
Let $\gamma:\partial D \rightarrow\R^2$ be a Lipschitz parametrization of a curve whose image we denote by 
\[
\Gamma:=\gamma(\Su).
\] 
We do not require that $\gamma$ is injective, so that $\Gamma$ might have self intersections. In this section we will make the assumption that $\R^2\setminus \Gamma$ is made by finitely many connected components $\{U_i\}_{i=1}^n$, namely Property (P) holds.\\

We denote by $U_0$ the unique unbounded connected component of $\R^2\setminus \Gamma$.  Moreover,  every $U_i$ with $i\geq 1$ is bounded and simply-connected.  Since $\Gamma$ is closed and $U_i$ is a connected component of $\R^2\setminus \Gamma$,  we infer that $\partial U_i\subseteq\Gamma$, for all $i$.  In particular $\mathcal H^1(\partial U_i)<\infty$. 
%

For all $i=1,\ldots,n$ we select $P_i\in U_i$ so that $\{P_1,\dots,P_n\}$ is a set of $n$ distinct points in $\R^2$. Further, for all $i=1,\ldots,n$, we choose closed balls $\overline B_r(P_i)\subset U_i$ in such a way that they are mutually disjoint. Let $P$ be a point in $\R^2\setminus (\cup_i\overline B_r(P_i))$.

\begin{definition}\label{def_puntibase}
{\rm	We say that a curve $\alpha:\Su\rightarrow \R^2\setminus \{P_1,\dots,P_n\}$ is based at $P$ if $\alpha((0,1))=P$.

Two Lipschitz curves $\alpha,\beta:\Su\rightarrow \R^2\setminus \{P_1,\dots,P_n\}$ based at $P$ are said to be homotopically equivalent if there is a Lipschitz homotopy $\Phi_{\alpha,\beta}:[0,1]\times \Su\rightarrow \R^2\setminus \{P_1,\dots,P_n\}$ such that $\Phi_{\alpha,\beta}(0,\cdot)=\alpha$, $\Phi_{\alpha,\beta}(1,\cdot)=\beta$, and $\Phi_{\alpha,\beta}(\cdot,(0,1))\equiv P$. In this case we write 
\begin{align}\label{homequi}
\alpha\equiv\beta.
\end{align}}
\end{definition}

We introduce the concept of winding number of a curve, which will be useful in the sequel.
\begin{definition}\label{def_homotopicEquivalence}
	Let $U$ be a bounded simply connected open set with Lipschitz boundary and let  $\gamma:\partial U\rightarrow\R^2$ be a Lipschitz curve, and $\Gamma:=\gamma(\partial U)$. For all $z\in \R^2\setminus \Gamma$ we introduce the winding number $\text{\rm link}(\gamma,z)\in \mathbb Z$ of $\gamma$ around $z=(z_1,z_2)$, defined by
	\begin{align}
		\text{\rm link}(\gamma,z):=\frac{1}{2\pi}\int_{\partial U}\left(\frac{\gamma_1(s)-z_1}{|\gamma(s)-z|^2}\dot\gamma_2(s)-\frac{\gamma_2(s)-z_2}{|\gamma(s)-z|^2}\dot\gamma_1(s)\right)ds.
	\end{align}
\end{definition}

Often, we will consider the winding number of curves defined on $\Su$, i.e. with $U=D$ in the previous definition.

\begin{definition}\label{def_sigma}
{\rm	For all $i=1,\dots,n$ we select a Lipschitz curve $\widetilde \sigma_i:\Su\rightarrow \R^2$ starting from $P$ and reaching a point in $\partial B_r(P_i)$; then we define $\sigma_i:=\widetilde \sigma_i\star \widehat \sigma_i\star \widetilde \sigma_i^{-1}$, where $\widehat \sigma_i$ is the costant speed parametrization of $\partial B_r(P_i)$ in counterclockwise order. The curve $\sigma_i:\Su\rightarrow\R^2$ will be a Lipschitz closed curve based at $P$ whose winding number $\textrm{link}(\sigma_i,P_j)$ around the point $P_j$ is 
	\[
	\mathrm{link}(\sigma_i,P_j)=\delta_{ij},\qquad\qquad \forall j\in \{1,\dots,n\}.
	\]
	Moreover, we assume that $\sigma_i((0,1))=P$ for all $i=1,\dots,n$ (we refer to Figures \ref{fig1:NotationAndExample} and \ref{fig2:ExamplefFiore} for a depiction of our notation and the complete scenario).  }
\end{definition}

The homotopy group of $\R^2\setminus \{P_1,\dots,P_n\}$ based at $P$ is the free group $F(n)$ on $n$ elements.  A basis for $\pi_1(\R^2\setminus \{P_1,\dots,P_n\})$ is given by the family $\{\sigma_1,\dots , \sigma_n\}$ (we recall that we always mean that the $\pi_1$ is meant to be based at $P$ even if is such dependence is not explicit).  An element $\eta$ of $F(n)$ is a string $\eta=(\eta_1,\dots,\eta_m)$ where every $\eta_i$ represents an element of the basis (or one of its inverse) and identifies the curve $\eta=\eta_1\star \ldots\star \eta_m$.   
To shortcut the notation, we will denote by 
\begin{align}\label{def_Sigma}
\Sigma(n):=\{ \sigma_1,\dots,
\sigma_n, \sigma_1^{-1},\dots, \sigma_n^{-1}\}.
\end{align}
We do not specify the dependence of $\Sigma$ on $\gamma$ since it will always be clear from the context.  Moreover, if $\eta\in \pi_1(\R^2\setminus \{P_1,\dots,P_n\})$ we will denote $(\eta_1,\dots,\eta_m)$ also by $\eta_1\dots\eta_m$, $\eta_i\in \Sigma(n)$.\\

Given a curve $\gamma\in C^{0,1}(\Su;\R^2\setminus \{P_1,\dots,P_n\})$ based at $P$, it belongs to a unique class in $\pi_1(\R^2\setminus \{P_1,\dots,P_n\})$.

\begin{definition}
Let $\gamma,\widehat \gamma\in \pi_1(\R^2\setminus \{P_1,\dots,P_n\})$ be curves based at $P$. By definition,  they belong to the same class in $\pi_1(\R^2\setminus \{P_1,\dots,P_n\})$ if and only if $\gamma\equiv \widehat \gamma$, where $\equiv$ is the relation \eqref{homequi} of Definition \ref{def_homotopicEquivalence}.
\end{definition}
Notice that if $\gamma\equiv \widehat \gamma$ and $\gamma_1\dots\gamma_m$ and $\widehat \gamma_1\dots\widehat\gamma_{\widehat m}$ are representatives of $\gamma$ and $\widehat \gamma $ in $F(n)$, then obviously $\gamma_1\dots\gamma_m=\widehat \gamma_1\dots\widehat\gamma_{\widehat m}$ in $F(n)$.

Given two Lipschitz curves $\alpha,\beta:\Su\rightarrow \R^2\setminus \{P_1,\dots,P_n\}$ based at $P$,  we can consider their concatenation $\alpha\star\beta$, according to Definition \ref{def28} where $\theta=P$.  
In such a case, denoting by $\alpha$ and $\beta$ themselves their representatives in $F(n)$, then the $\alpha\beta$ is a representative of $\alpha\star\beta$.


\begin{definition}[Generic representative]
{\rm Let $\alpha:\Su\rightarrow \R^2\setminus \{P_1,\dots,P_n\}$ be a Lipschitz curve based at $P$; if $\gamma_1\dots\gamma_{k}$ is a representative of $\alpha$, we call the representation $\gamma_1\dots\gamma_{k}$ of $\alpha$ a \textit{word} representing $\alpha$. The identity in $F(n)$ is called the {\it null word}. We also call a \textit{generic representative} of $\alpha$ any word of the form $\gamma_{1+a}\dots\gamma_{k+a}$  for all $a\in \mathbb Z$ (where $i+a$ is considered $\textrm{mod}(k)$), obtained from a representative $\gamma_1\dots\gamma_{k}$ of $\alpha$. The family of generic representative of $\alpha$ is noted as $$\jump{\alpha}:=\{\gamma=\gamma_1\dots\gamma_{k}:\gamma\text{ is a generic representative of }\alpha\}.$$}
\end{definition}
\begin{remark}\label{rem_coniugazione}
Notice that if $\gamma\in \jump{\alpha}$, we also have $\overline\gamma \gamma\overline\gamma^{-1}\in \jump{\alpha}$ for any $\overline\gamma\in \pi_1(\R^2\setminus \{P_1,\dots,P_n\})$; this follows easily by definition of generic representative. For this reason the notion of generic representative of $\gamma$ coincides with the conjugate class of $\gamma$ in $\pi_1(\R^2\setminus \{P_1,\dots,P_n\})$.
\end{remark}

Assume now $\alpha:\Su\rightarrow \R^2\setminus \{P_1,\dots,P_n\}$ is not based at $P$. Let $\widehat \beta:[0,1]\rightarrow  \R^2\setminus \{P_1,\dots,P_n\}$ be a Lipschitz curve so that $\widehat \beta(1)=\alpha((1,0))$, and $\widehat \beta(0)=P$. Up to reparamentrization on $\Su$, the curve $\widehat \alpha:=\widehat\beta^{-1}\star\widehat \beta$ is a $1$-null curve, and  $$ \widehat\beta\star\alpha\star\widehat \beta^{-1}$$ is based at $P$. We claim that the class $\jump{ \widehat\beta\star\alpha\star\widehat \beta^{-1}}$ does not depend on the choice of $\widehat \beta$. Indeed, if $\widetilde \beta$ is another Lipschitz curve such that $\widetilde \beta(1)=\alpha((1,0))$, $\widetilde \beta(0)=P$ then,  setting $\eta:=\widetilde\beta\star\widehat \beta^{-1}$,  we have
\[
\widetilde\beta\star\alpha\star\widetilde \beta^{-1} \equiv\widetilde\beta\star\widehat\alpha\star\alpha\star\widehat\alpha^{-1}\star\widetilde \beta^{-1}\equiv \eta\star \widehat\beta\star\alpha\star\widehat \beta^{-1}\star\eta^{-1}\equiv \widehat\beta\star\alpha\star\widehat \beta^{-1}  
\]
where we used Remark \ref{rem_coniugazione} to infer the last equivalence.  
Thanks to this fact the following definition is well-posed.

\begin{definition}\label{def_genclas}
	{\rm Let $\alpha:\Su\rightarrow \R^2\setminus \{P_1,\dots,P_n\}$ be a Lipschitz curve; we define the {\it generic class of $\alpha$} (or {\it conjugate class}),  denoted by $\jump{\alpha}$,  as the class of generic representatives  of $\beta \star\alpha\star \beta^{-1}$,  where $\beta:[0,1]\rightarrow  \R^2\setminus \{P_1,\dots,P_n\}$ is any $1$-null Lipschitz curve   such that $ \beta(1)=\alpha((1,0))$ and $ \beta(0)=P$.}
\end{definition}

\begin{remark}
	{Notice that, from the discussion preceeding Definition \ref{def_genclas}, the notion of generic class actually extends the notion of generic representative class to the case of curves not based at $P$. For this reason we have used the same symbol.  Furthermore, it is well-known that this class is invariant under free homotopy (i.e., without a base point). We recall this fact in the following theorem for the reader convenience.}
\end{remark}
\begin{lemma}\label{teo_2.18}
Let $\alpha,\beta:\Su\rightarrow \R^2\setminus \{P_1,\dots,P_n\}$ be Lipschitz curves and assume there is a Lipschitz homotopy $\Phi_{\alpha,\beta}:[0,1]\times \Su\rightarrow \R^2\setminus \{P_1,\dots,P_n\}$ such that $\Phi_{\alpha,\beta}(0,\cdot)=\alpha$ and $\Phi_{\alpha,\beta}(1,\cdot)=\beta$. 
Then $\alpha$ and $\beta$ have the same conjugate class, i.e. 
$$\jump{\alpha}=\jump{\beta}.$$
\end{lemma}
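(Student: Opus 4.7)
The plan is to reduce the statement to the classical fact from algebraic topology that a free homotopy between two loops produces a based homotopy, up to conjugation by the path traced by the base point during the free homotopy. I will use the homotopy $\Phi_{\alpha,\beta}$ to construct that tracking path, and then apply the well-posedness of Definition \ref{def_genclas}.

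First, fix the base point $\theta_0 := (1,0) \in \Su$ and consider the Lipschitz path
\[
\delta : [0,1] \longrightarrow \R^2\setminus \{P_1,\dots,P_n\}, \qquad \delta(s) := \Phi_{\alpha,\beta}(s,\theta_0).
\]
This joins $\alpha(\theta_0)$ to $\beta(\theta_0)$, and stays in $\R^2\setminus \{P_1,\dots,P_n\}$ because $\Phi_{\alpha,\beta}$ does.

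The central step is to show that $\alpha$ is homotopic to $\delta \star \beta \star \delta^{-1}$ as Lipschitz loops based at $\alpha(\theta_0)$. I construct the based homotopy explicitly from $\Phi_{\alpha,\beta}$. At intermediate time $s \in [0,1]$, the deformed loop (after a piecewise-linear Lipschitz reparametrization in $\Su$) is
\[
\delta|_{[0,s]} \;\star\; \Phi_{\alpha,\beta}(s,\cdot) \;\star\; (\delta|_{[0,s]})^{-1},
\]
i.e., first travel along $\delta$ from $\alpha(\theta_0)$ to $\delta(s)$, then traverse the free-homotopy loop at time $s$ (which is based at $\delta(s)$), then return along $\delta^{-1}$. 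At $s=0$ this degenerates to $\alpha$ and at $s=1$ it equals $\delta \star \beta \star \delta^{-1}$. Because $\Phi_{\alpha,\beta}$ avoids $\{P_1,\dots,P_n\}$, so does every loop of this family; and since $\delta$ and $\Phi_{\alpha,\beta}$ are Lipschitz, the resulting homotopy on $[0,1]\times \Su$ is Lipschitz after standard reparametrization.

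Now pick any Lipschitz path $\widehat{\beta}_\alpha : [0,1] \to \R^2\setminus\{P_1,\dots,P_n\}$ with $\widehat{\beta}_\alpha(0) = P$ and $\widehat{\beta}_\alpha(1) = \alpha(\theta_0)$, which exists because the punctured plane is path-connected. By Definition \ref{def_genclas} and the previous step,
\[
\jump{\alpha} = \jump{\widehat{\beta}_\alpha \star \alpha \star \widehat{\beta}_\alpha^{-1}} = \jump{\widehat{\beta}_\alpha \star \delta \star \beta \star \delta^{-1} \star \widehat{\beta}_\alpha^{-1}} = \jump{(\widehat{\beta}_\alpha \star \delta) \star \beta \star (\widehat{\beta}_\alpha \star \delta)^{-1}},
\]
where the middle equality uses based-homotopy invariance of the conjugate class once a loop is based at $P$, and the last is merely a reparametrization. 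Since $\widehat{\beta}_\alpha \star \delta$ is a Lipschitz path from $P$ to $\beta(\theta_0)$, the right-hand side is by definition $\jump{\beta}$, where we crucially invoke the well-posedness of Definition \ref{def_genclas} (independence of the connecting path), which was established in the discussion immediately preceding that definition via conjugation and Remark \ref{rem_coniugazione}.

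The main obstacle is the construction of the Lipschitz based homotopy from $\Phi_{\alpha,\beta}$: one must carve the cylinder $[0,1]\times \Su$ into three regions (two thin triangular sectors near $\theta_0$ carrying the forward and return trips along $\delta$, and a central region carrying the reparametrized loop $\Phi_{\alpha,\beta}(s,\cdot)$) and verify that the gluing is Lipschitz and takes values in the punctured plane. All three pieces are Lipschitz by construction, and compatibility along the common boundaries follows from the definition of $\delta$ as the trace of the free homotopy at $\theta_0$.
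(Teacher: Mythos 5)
Your proof is correct and takes essentially the same approach as the paper: both use the base-point trace $\delta(s)=\Phi_{\alpha,\beta}(s,(1,0))$ to turn the free homotopy into a based one and then conjugate back to the base point $P$ via the well-posedness of Definition~\ref{def_genclas}. The paper simply packages the conjugation into the single explicit homotopy $\Phi^P_{\alpha,\beta}$ at $P$ (using the $1$-null curve $\widehat\alpha$), whereas you first produce the based homotopy at $\alpha(\theta_0)$ and then invoke path-independence of the connecting path; the two are the same argument with different bookkeeping.
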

\begin{proof}
	Let
$\widehat \alpha:=\widehat\eta^{-1}\star\widehat \eta:[0,1]\rightarrow \R^2\setminus \{P_1,\dots,P_n\}$ be a Lipschitz $1$-null curve with $\widehat \eta:[0,1]\rightarrow  \R^2\setminus \{P_1,\dots,P_n\}$  such that $\widehat \eta(1)=\alpha((1,0))$ and $\widehat \eta(0)=P$.
We now define
$\widetilde \Phi:[0,1]\times [0,1]\rightarrow \R^2$ as 
$$\widetilde \Phi(t,s)=\Phi_{\alpha,\beta}\big((t-s)\vee0,(1,0)\big),$$
in such a way that $\widetilde \Phi(0,s)=\Phi_{\alpha,\beta}\big(0,(1,0)\big)=\alpha((1,0))$  and $\widetilde \Phi(1,s)=\Phi_{\alpha,\beta}\big(1-s,(1,0)\big)$ for all $s\in[0,1]$. We finally define $\Phi_{\alpha,\beta}^P:[0,1]\times[0,2\pi]\rightarrow \R^2\setminus \{P_1,\dots,P_n\}$ as
$$\Phi_{\alpha,\beta}^P(t,\theta):=\begin{cases}
	\Phi_{\alpha,\beta}(t,e^{i4\theta})&\text{if }\theta\in[0,\frac{\pi}{2}]\\
	\widetilde\Phi(t,\frac{2}{\pi}(\theta-\frac{\pi}{2}))&\text{if }\theta\in[\frac{\pi}{2},\pi]\\
	\widehat \alpha(\frac{2}{\pi}(\theta-\pi))&\text{if }\theta\in[\pi,\frac{3\pi}{2}]\\
	\widetilde\Phi(t,1-\frac{2}{\pi}(\theta-\frac{3\pi}{2}))&\text{if }\theta\in[\frac{3\pi}{2},2\pi],\\
\end{cases}$$
which turns is a Lipschitz homotopy\footnote{Where we have identified $[0,2\pi)$ with $\Su$.} between $\Phi_{\alpha,\beta}^P(0,\theta)=\widehat \eta^{-1} \star\alpha\star \widehat \eta $ and $\Phi_{\alpha,\beta}^P(1,\theta)=\widehat \gamma\star\beta$ (cf with Figure \ref{fig:Prop}); Here $\widehat \gamma:=\Phi_{\alpha,\beta}\big(1-\cdot,(1,0)\big)\star\widehat \alpha\star\Phi_{\alpha,\beta}^{-1}\big(1-\cdot,(1,0)\big)$ being a Lipschitz $1$-null curve connecting $P$ to $\beta((0,1))$. Hence the thesis follows from Definition \ref{def_genclas}.
\end{proof}

\begin{figure}[t!]
\begin{center}
\includegraphics[scale=0.7]{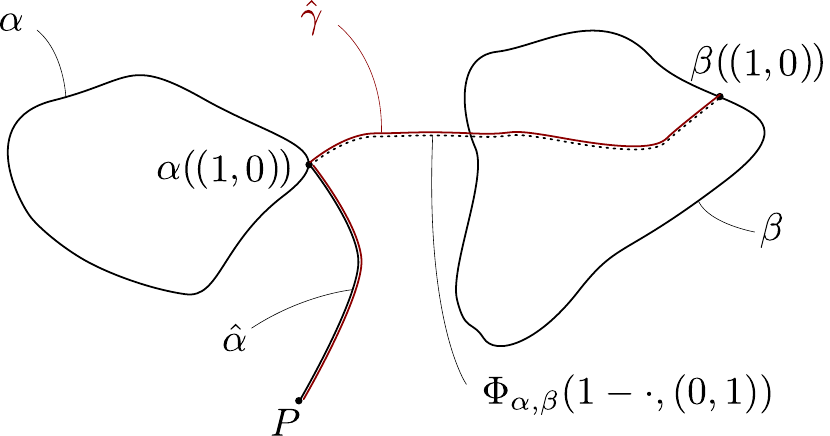}
\caption{A depiction of the situation in the proof of Lemma \ref{teo_2.18}. The main issue is to link $\alpha$,  $\beta$ to the same base point $P$.  To do that, the homotopy $\Phi_{\alpha,\beta}$ is exploited to produce the dotted curve $\Phi_{\alpha,\beta}\big(1-\cdot,(1,0)\big)$, which is then concatenated to $\hat{\alpha}$ to obtain $\hat{\gamma}$ (in red).}\label{fig:Prop}
\end{center}
\end{figure}

\begin{lemma}\label{teo_Ageneric}
Let $\gamma\in C^{0,1}(\Su;\R^2\setminus \{P_1,\dots,P_n\})$ and let $\beta\in C^{0,1}([0,1];\R^2\setminus \{P_1,\dots,P_n\})$ be a curve such that $\beta(0)=P$ and $\beta(1)\in \gamma(\Su)$. Then  $\mathcal A(\gamma)=\mathcal A(\beta\star\gamma\star\beta^{-1})$.
\end{lemma}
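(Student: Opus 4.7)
The plan is to reduce the statement to Proposition \ref{prop:nullcurve} combined with the reparametrization invariance given by Corollary \ref{cor:area_reparametrization}. The key observation is that the loop
\[
\widehat{\gamma}:=\beta^{-1}\star\beta,
\]
parametrized as a Lipschitz map from $\Su$ to $\R^2$, is $1$-null in the sense of Definition \ref{def:null_curves}: if we choose the parametrization so that the two semicircular arcs of $\Su$ separated by the preimages of $\beta(1)$ carry the image of $\beta$ traversed in opposite directions, then the reflection $e^{is}\mapsto e^{i(2\pi-s)}$ serves as the required arcs-homeomorphism identifying $\widehat{\theta_1\theta_2}$ with $-\widehat{\theta_1\theta_2}$ along which $\widehat\gamma$ takes equal values. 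In particular $\widehat\gamma$ is null.

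Since $\widehat\gamma$ is a loop based at $\beta(1)$ and by assumption $\beta(1)\in\gamma(\Su)$, there exists $\theta\in\Su$ with $\gamma(\theta)=\beta(1)=\widehat\gamma(\widehat\theta)$ for some $\widehat\theta\in\Su$. Hence the hypotheses of Proposition \ref{prop:nullcurve} are met, and I obtain
\[
\mathcal A\bigl(\gamma\star(\beta^{-1}\star\beta)\bigr)=\mathcal A(\gamma).
\]

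The final step is to identify $\beta\star\gamma\star\beta^{-1}$ with $\gamma\star\beta^{-1}\star\beta$ up to reparametrization. Both loops trace the same concatenated path $\beta\cup\gamma\cup\beta^{-1}$ in the plane; they differ only by the choice of base point on $\Su$ (one starts at $P$, the other at $\beta(1)$). A cyclic shift of the angular parameter on $\Su$ is a bi-Lipschitz self-homeomorphism of $\Su$, so Corollary \ref{cor:area_reparametrization} yields
\[
\mathcal A(\beta\star\gamma\star\beta^{-1})=\mathcal A(\gamma\star\beta^{-1}\star\beta),
\]
and combining with the previous identity gives the claim.

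The only genuine obstacle is the last bookkeeping step: Definition \ref{def28} fixes the base point via the parameter $\alpha$ with $\theta=e^{i\alpha}$, and concatenation is associative only up to reparametrization. One therefore has to write down explicitly the bi-Lipschitz map of $\Su$ onto itself that rotates the parameter interval so that the composite $\beta\star(\gamma\star\beta^{-1})$ on $\Su$ coincides with $(\gamma\star\beta^{-1})\star\beta$ shifted cyclically, and invoke Corollary \ref{cor:area_reparametrization}. Once this reparametrization is in place, the chain of equalities is immediate.
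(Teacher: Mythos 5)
Your proof is correct and follows essentially the same route as the paper: rewrite $\beta\star\gamma\star\beta^{-1}$ as a bi-Lipschitz reparametrization (a rotation of $\Su$) of $\gamma\star(\beta^{-1}\star\beta)$, observe that $\beta^{-1}\star\beta$ is $1$-null, and invoke Proposition \ref{prop:nullcurve}. You supply slightly more detail than the paper (which dispatches the reparametrization in a footnote) on both the arcs-homeomorphism witnessing that $\beta^{-1}\star\beta$ is $1$-null and the cyclic-shift bookkeeping, but the argument is the same.
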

\begin{proof}
Notice that, using a  re-parametrization\footnote{This can be simply a rotation around $0$.} $\sigma:\Su\rightarrow \Su$, we see that $$\mathcal A(\beta\star\gamma\star\beta^{-1})=\mathcal A(\gamma\star\beta^{-1}\star\beta).$$
Hence, since $\beta^{-1}\star\beta$ is a $1$-null curve, the thesis readily follows from Proposition \ref{prop:nullcurve}.
\end{proof}

\subsection{Degree, multiplicity and area formula}

Let $U\subset\R^2$ be a bounded open set with Lipschitz boundary.
Given $u\in C^{0,1}(U;\R^2)$, we denote by $\mathcal R_u\subseteq\Om$ the set of regular points of $u$, namely the set of Lebesgue points $x$ of $\nabla u$. 
\begin{definition}
	Let $u\in C^{0,1}(U;\R^2)$ and let $A\subseteq U$ be a measurable set. For all $y\in \R^2$ the degree of $u$ on $A$ at $y$ is defined as
	$$\text{\rm deg}(u,A,y):=\sum_{x\in u^{-1}(y)\cap \mathcal R_u\cap A}\text{\rm sign}(\det\nabla u(x)),$$
	whenever the sum on the right-hand side exists.
\end{definition}

It is well-known (see, e.g., \cite{GMS}) that if $\det\nabla u\in L^1(U)$ then for a.e. $y\in \R^2$ the sum on the right-hand side is finite, $\text{\rm deg}(u,U,y)$ belongs to $ L^1(\R^2)$ and satisfies
\begin{align}\label{int_det_deg}
	\int_A\det\nabla udx=\int_{\R^2}\text{\rm deg}(u,A,y)dy,
\end{align}
for all measurable sets $A\subseteq U$. More generally, for all measurable $\varphi:\R^2\rightarrow \R$ one has
\begin{align}\label{int_det_deg2}
	\int_A\varphi(u(x))\det\nabla u(x)dx=\int_{\R^2}\varphi(y)\text{\rm deg}(u,A,y)dy.
\end{align}
Further, if $u$ is regular enough some other important properties of the degree are satisfied. Specifically, we summarize some of them:
\begin{itemize}
	\item[(D1)] If $u\in  C^{0,1}(\overline U;\R^2)$, then $\text{\rm deg}(u,U,\cdot)$ is locally constant on  $\R^2\setminus u(\partial U)$. Moreover, if $U_0\subset(\R^2\setminus u(\partial U))$ is the unbounded connected component of $\R^2\setminus u(\partial U)$ , then $\text{\rm deg}(u,U,\cdot)=0$ on $U_0$;
	\item[(D2)] If $u,v\in  C^{0,1}(\overline U;\R^2)$ and $u=v$ on $\partial U$, then 
	\begin{align}\label{equ_degree}
		\text{\rm deg}(u,U,y)=\text{\rm deg}(v,U,y),
	\end{align} 
	for a.e. $y\in \R^2$;
	\item[(D3)] If $u,v\in  C^{0,1}(\overline U;\R^2)$, and $\Phi:U\times[0,1]\rightarrow\R^2$ is an homotopy such that $\Phi\in C^{0,1}(\overline U\times[0,1];\R^2)$ with $\Phi(\cdot,0)=u(\cdot)$, $\Phi(\cdot,1)=v(\cdot)$, then  \eqref{equ_degree} holds for a.e. $y\in \R^2\setminus \Phi(\partial U\times[0,1])$.
\end{itemize}

We introduce the notion of degree for $\Su$-valued maps:
\begin{definition}
	{\rm	Let $w\in C^{0,1}(\partial U;\Su)$ where $U$ is a simply-connected bounded open set with $\partial U$ Lipschitz. Then we define the degree of $u$ on $\partial U$ as 
		\begin{align}
			\text{\rm deg}_{\partial U}(w):=\frac{1}{2\pi}\int_{\partial U}\left(w_1\frac{\partial w_2}{\partial s}-w_2\frac{\partial w_1}{\partial s}\right)ds=\frac{1}{2\pi}\int_{\partial U}\left(-w_2,w_1\right)\cdot\frac{\partial w}{\partial s}ds.
	\end{align} 
	where $\frac{\partial w}{\partial s}$ is the derivative computed tangentially  to $\partial U$.}
\end{definition}

\begin{remark}\label{rem_dominio}
	Let $U\subset \R^2$ be a simply connected and bounded open set with Lipschitz boundary, and let $\sigma:\Su\rightarrow \partial U$ be a Lipschitz parametrization of $\partial U$. 
	Let also $w\in C^{0,1}(\partial U;\Su)$, so that $v:=w\circ \sigma\in C^{0,1}(\Su;\Su)$; hence, by the change of variable formula one has
	\begin{align}
		\text{\rm deg}_{\Su}(v)&=\frac{1}{2\pi}\int_{\Su}\left(v_1\frac{\partial v_2}{\partial s}-v_2\frac{\partial v_1}{\partial s}\right)ds\nonumber\\
		&=\frac{1}{2\pi}\int_{\Su}\big(-w_2(\sigma(s)),w_1(\sigma(s))\big)\cdot \left(\frac{\partial w}{\partial \sigma}(\sigma(s))\frac{\partial \sigma}{\partial s}(s)\right)ds\nonumber\\
		&=\frac{1}{2\pi}\int_{\partial U}\big(-w_2,w_1\big)\cdot \frac{\partial w}{\partial \sigma}d\sigma=	\text{\rm deg}_{\partial U}(w).
	\end{align}
\end{remark}
An easy computation shows the following fact:

\begin{lemma}\label{lem_210}
	Let $\gamma:\Su\rightarrow \R^2$ be a Lipschitz curve and $z\in \R^2\setminus \Gamma$ (where $\Gamma=\gamma(\Su)$), then 
	\begin{align}\label{deg=wind}
		\text{\rm link}(\gamma,z)=\text{\rm deg}_{\Su}\left(\frac{\gamma-z}{|\gamma-z|}\right).
	\end{align}
\end{lemma}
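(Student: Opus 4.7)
The statement is essentially a direct computation: both sides are integrals of explicit expressions involving $\gamma$, $z$, and their derivatives, so the plan is to manipulate the integrand of $\text{\rm deg}_{\Su}(w)$ with $w:=(\gamma-z)/|\gamma-z|$ into the integrand of $\text{\rm link}(\gamma,z)$.

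First I would check that $w$ is an admissible map for the definition of $\text{\rm deg}_{\Su}$. Since $z\in \R^2\setminus \Gamma$ and $\Gamma$ is compact, we have $|\gamma(s)-z|\geq \mathrm{dist}(z,\Gamma)>0$ for every $s\in \Su$, so $w$ is Lipschitz from $\Su$ to $\Su$.

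Next, setting $\rho(s):=|\gamma(s)-z|$, the quotient rule gives
\begin{equation*}
\frac{\partial w_i}{\partial s}(s)=\frac{\dot\gamma_i(s)}{\rho(s)}-\frac{(\gamma_i(s)-z_i)\dot\rho(s)}{\rho(s)^2},\qquad i=1,2.
\end{equation*}
Substituting into $w_1\partial_s w_2-w_2\partial_s w_1$, the two terms proportional to $\dot\rho$ (which take the form $(\gamma_1-z_1)(\gamma_2-z_2)\dot\rho/\rho^3$ and its mirror) cancel exactly, leaving
\begin{equation*}
w_1\frac{\partial w_2}{\partial s}-w_2\frac{\partial w_1}{\partial s}=\frac{(\gamma_1-z_1)\dot\gamma_2-(\gamma_2-z_2)\dot\gamma_1}{\rho^2}.
\end{equation*}
Integrating over $\Su$ and dividing by $2\pi$, the right-hand side is precisely the definition of $\text{\rm link}(\gamma,z)$ (with $U=D$), while the left-hand side is the definition of $\text{\rm deg}_{\Su}(w)$; this yields \eqref{deg=wind}.

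There is no real obstacle here: the only subtlety worth remarking is the justification that the cancellation of the $\dot\rho$ terms is legitimate almost everywhere on $\Su$, which follows from the Lipschitz regularity of $\gamma$ (and hence of $\rho$, bounded away from zero), so that all derivatives exist in the a.e.\ sense and the pointwise algebraic identity integrates to the stated equality.
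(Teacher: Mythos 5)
Your proof is correct and is precisely the ``easy computation'' the paper alludes to but does not write out: the paper simply states the lemma after the remark ``An easy computation shows the following fact'' with no proof environment. Your quotient-rule expansion, the cancellation of the $\dot\rho$ terms, and the remark on a.e.\ differentiability of the Lipschitz functions $\gamma$ and $\rho$ (with $\rho$ bounded away from zero) complete the argument exactly as intended.
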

\begin{remark}\label{rem_dominio2}
	As a consequence of Remark \ref{rem_dominio}, the previous lemma implies the following fact: 
	If $U$ is a Lipschitz bounded and simply connected open set and $\gamma:\partial U\rightarrow \R^2$, we choose a  Lipschitz parametrization $\sigma:\Su\rightarrow \partial U$, and infer
	\begin{align*}
		\text{\rm link}(\gamma\circ \sigma,z)=	\text{\rm deg}_{\Su}\left(\frac{\gamma\circ \sigma-z}{|\gamma\circ \sigma-z|}\right)=	\text{\rm deg}_{\partial U}\left(\frac{\gamma-z}{|\gamma-z|}\right)=	\text{\rm link}(\gamma,z).
	\end{align*} 
\end{remark}

Let now $\gamma:\Su\rightarrow\R^2$ be a Lipschitz curve, and let $A$ be a bounded connected component of $\R^2\setminus \Gamma$; let also $P\in A$ be arbitrary.
Assume that $u:D\rightarrow \R^2$ is a Lipschitz map with $u=\gamma$ on $\Su$; since the map $\gamma(\cdot)-P:\Su\rightarrow \R^2$ never vanishes, there is a Lipschitz homotopy $\Phi:\Su\times [0,1]\rightarrow \R^2$ such that
$$\Phi(\cdot,0)=\gamma(\cdot)-P,\qquad\Phi(\cdot,1)=\frac{\gamma(\cdot)-P}{|\gamma(\cdot)-P|}\qquad 0\notin\Phi(\Su\times[0,1]).$$
By extending $\Phi$ to $\overline D\times [0,1]$ in a Lipschitz way and with $\overline \Phi(\cdot,0)=u(\cdot)-P$, we see that the map $v:=\overline \Phi(\cdot,1)$, whatever the extension is, satisfies  
\begin{align}\label{eq_degrees}
	\text{\rm deg}(u(\cdot)-P,D,y)=\text{\rm deg}(v(\cdot),D,y)\qquad \text{for a.e. }y\in \R^2\setminus \Phi(\Su\times[0,1]),
\end{align}
by (D3). Furthermore, since $v(\Su)\subseteq \Su$, $\text{\rm deg}(v(\cdot),D,y)\equiv m$ is constant on $D$ (for some $m\in \mathbb Z$); we infer that there is a neighborhood $N$ of $0$ where \eqref{eq_degrees} holds for a.e. $y\in N$, and so, as $\text{\rm deg}(u(\cdot)-P,D,y)=\text{\rm deg}(u(\cdot),D,y+P)$, we see that $\text{\rm deg}(u,D,\cdot)=m$ in the neighborhood $N+P$ of $P$. In particular,  $\text{\rm deg}(u,D,\cdot)=m$ a.e. in $A$ (the connected component of $\R^2\setminus \Gamma$ containing $P$).
Finally, from Lemma \ref{lem_210}, we infer  
\begin{align*}
	\pi\text{\rm link}(\gamma,P)=&\pi\text{\rm deg}_{\Su}(v) 	 =  \int_D\det(\nabla v)dx=\int_{D}\text{\rm deg}(v,D,y)dy=m|D|=m\pi,
\end{align*}
where in the second equality we have used Stokes Theorem,  and in the third one equation \eqref{int_det_deg}. We conclude that 
\begin{align}\label{link=deg}
	\text{\rm link}(\gamma,y)=m=\text{\rm deg}(u,D,y),\qquad\qquad\text{for a.e. }y\in A.
\end{align}

The previous argument leads one to the following:
\begin{lemma}\label{lemma_linkdeg_gen}
	Let $U\subset\R^2$ be a simply-connected and bounded open set with Lipschitz boundary, let $\gamma:\partial U\rightarrow\R^2$ be a Lipschitz curve, and let $u:U\rightarrow \R^2$ be a Lipschitz map such that $u=\gamma$ on $\partial U$. Then 
	\begin{align}
		\text{\rm link}(\gamma,z)=\text{\rm deg}(u,U,z),\qquad\qquad\text{for a.e. }z\in \R^2.
	\end{align}
\end{lemma}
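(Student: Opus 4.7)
\emph{Proof plan.} My plan is to adapt the argument that immediately precedes the statement (carried out for $U = D$) to a general bounded, simply-connected, Lipschitz domain $U$. The observation is that that argument uses only Stokes' theorem, the degree--determinant identity \eqref{int_det_deg}, and properties (D1)--(D3) of the degree, all of which remain available on any bounded Lipschitz domain. So the disk-case proof should transplant almost verbatim with $D$ replaced by $U$.

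First I reduce to a pointwise argument. Fix $z \in \R^2 \setminus \Gamma$, where $\Gamma := \gamma(\partial U)$. If $z$ lies in the unique unbounded connected component of $\R^2 \setminus \Gamma$, then $\text{\rm link}(\gamma, z) = 0$ by standard winding-number theory, and $\text{\rm deg}(u, U, z) = 0$ by (D1) applied to $u$, since $u(\partial U) = \Gamma$ and $z$ lies in the unbounded component of $\R^2 \setminus u(\partial U)$; hence both sides vanish. Otherwise $z$ belongs to some bounded connected component $A$ of $\R^2 \setminus \Gamma$; I pick $P \in A$ and mimic the preceding computation, constructing a Lipschitz homotopy $\Phi : \partial U \times [0,1] \to \R^2 \setminus \{0\}$ from $\gamma - P$ to $(\gamma - P)/|\gamma - P|$, extending it to a Lipschitz $\overline{\Phi} : \overline{U} \times [0,1] \to \R^2$ with $\overline{\Phi}(\cdot, 0) = u - P$, and setting $v := \overline{\Phi}(\cdot, 1)$. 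Property (D3) then yields $\text{\rm deg}(u(\cdot) - P, U, \cdot) = \text{\rm deg}(v, U, \cdot)$ a.e. in a neighborhood of $0$.

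The core computation mirrors the disk case. Since $v(\partial U) \subseteq \Su$, property (D1) forces $\text{\rm deg}(v, U, \cdot)$ to equal some constant $m \in \mathbb{Z}$ on $\text{\rm int}(D)$ and $0$ on $\R^2 \setminus \overline{D}$. On the one hand, Stokes' theorem gives $\int_U \det(\nabla v)\, dx = \pi \, \text{\rm deg}_{\partial U}(v|_{\partial U})$; on the other, \eqref{int_det_deg} combined with the above description of $\text{\rm deg}(v, U, \cdot)$ yields $\int_U \det(\nabla v)\, dx = m\pi$. Equating the two, $m = \text{\rm deg}_{\partial U}(v|_{\partial U})$, and by Remark \ref{rem_dominio2} this equals $\text{\rm link}(\gamma, P)$. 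Translating, $\text{\rm deg}(u, U, y) = \text{\rm link}(\gamma, P)$ for a.e. $y$ in a neighborhood of $P$.

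Finally, $\text{\rm deg}(u, U, \cdot)$ is locally constant on $\R^2 \setminus u(\partial U) = \R^2 \setminus \Gamma$ by (D1), while $\text{\rm link}(\gamma, \cdot)$ is likewise locally constant on $\R^2 \setminus \Gamma$ (it is integer-valued and continuous in $z$ away from $\Gamma$). Both are therefore constant on $A$, so the agreement established at $P$ propagates to a.e. point of $A$; letting $A$ range over all bounded components, and recalling the unbounded-component case, gives the claim on a.e. $z \in \R^2$. I do not foresee any real obstacle: the only technical care needed is in invoking Stokes' theorem and \eqref{int_det_deg} for Lipschitz $v$ on the Lipschitz domain $U$, but both are classical at that regularity.
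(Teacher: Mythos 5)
Your proof is correct, but it takes a genuinely different route from the paper's. The paper proves the lemma by \emph{reduction to the disk}: it fixes a Lipschitz parametrization $\sigma:\Su\to\partial U$, extends it to a Lipschitz $\phi:D\to\R^2$, notes that $u\circ\phi$ has boundary datum $\gamma\circ\sigma$ so the already-established disk identity \eqref{link=deg} gives $\text{\rm link}(\gamma,z)=\text{\rm link}(\gamma\circ\sigma,z)=\deg(u\circ\phi,D,z)$ a.e., and then verifies $\deg(u\circ\phi,D,\cdot)=\deg(u,U,\cdot)$ a.e.\ by a change-of-variables computation using \eqref{int_det_deg2} together with $\deg(\phi,D,\cdot)=\chi_U$. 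You instead \emph{transplant the disk argument verbatim to $U$}: you normalize $u-P$ via a homotopy to a $\Su$-valued boundary map $v$, apply (D1), (D3), Stokes and \eqref{int_det_deg} on $U$ to get $m=\deg_{\partial U}(v|_{\partial U})=\text{\rm link}(\gamma,P)$, and propagate by local constancy of both sides on $\R^2\setminus\Gamma$ (with the trivial treatment of the unbounded component). Both are valid. Your version is somewhat more direct and avoids proving the degree pullback identity $\deg(u\circ\phi,D,\cdot)=\deg(u,U,\cdot)$; the cost is that you re-invoke Stokes and the degree machinery on a general Lipschitz domain, which is harmless here since (D1)--(D3) and \eqref{int_det_deg} are already stated for such domains, but the paper prefers to confine those steps to the disk where they have been checked and only use on $U$ the cited algebraic properties of the degree and the change-of-variables formula.
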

\begin{proof}
	Let $\sigma:\Su\rightarrow \partial U$ be a Lipschitz parametrization preserving orientation. From Remark \ref{rem_dominio2}
	\begin{align}
		\text{\rm link}(\gamma,z)=	\text{\rm link}(\gamma\circ \sigma,z)\qquad\text{ for a.e. }z\in \R^2.
	\end{align}
	Let $\phi:D\rightarrow \R^2$ be any Lipschitz extension of $\phi\res\Su=\sigma:\Su\rightarrow \partial U$. Hence, from \eqref{link=deg} we infer
	\begin{align}\label{224}
		\text{\rm link}(\gamma,z)=\text{\rm link}(u\circ\phi\res\Su,z)=\text{\rm deg}(u\circ \phi,D,z),\qquad\qquad\text{for a.e. }z\in \R^2.
	\end{align}
	Now, again by \eqref{link=deg} applied to $\sigma$ and $\phi$, one easily sees that 
	$$\text{\rm deg}(\phi,D,y)=\text{\rm link}(\sigma,y)=\chi_U(y)\qquad\qquad \text{for a.e. }y\in \R^2.$$
	Therefore, for all measurable maps $\varphi:\R^2\rightarrow \R$ we have, from \eqref{int_det_deg2},
	\begin{align*}
		\int_{\R^2}&\varphi(z)\text{\rm deg}(u\circ \phi,D,z)dz=
		\int_D\varphi(u(\phi(x)))\det(\nabla (u\circ\phi)(x))dx\\
		&=\int_D\varphi(u(\phi(x)))\det(\nabla u(\phi(x)))\det(\nabla \phi(x))dx=\int_{\R^2}\varphi(u(y))\det(\nabla u(y))\text{\rm deg}(\phi,D,y)dy\\
		&=\int_{U}\varphi(u(y))\det(\nabla u(y))dy=\int_{\R^2}\varphi(z)\text{\rm deg}(u,U,z)dz,
	\end{align*}
	from which we conclude $\text{\rm deg}(u\circ \phi,D,z)=\text{\rm deg}(u,U,z)$ for a.e. $z\in\R^2$. The thesis then follows from \eqref{224}.
\end{proof}

%
%
%
%

For a Lipschitz function $u:U\rightarrow \R^2$ and $z\in \R^2$ we denote by
\begin{align}\label{multiplicity}
	\mathrm{mul}(u,U,z):=\#(\{x\in U \ | \ u(x)=z\} ),
\end{align}
the multiplicity (by $u$) of $z$ on $U$.
From the area formula \cite[Theorem 8.9]{maggi2012sets} we have,  for all Lipschitz maps $u:U\rightarrow\R^2$,  that
\begin{equation}\label{eqn:area}
	\int_U |\det(\nabla u)|\d x = \int_{\R^2}\mathrm{mul}(u,U,z) \d z.
\end{equation}
We recall that, given $u:U\rightarrow\R^2$ Lipschitz, we denote by $\mathcal R_u\subset U$ the set of regular points for $u$. It is well-known that  $U\setminus \mathcal R_u$ is a negligible set with respect to the Lebesgue measure \cite{GMS}, and noting by \begin{align}\label{nu}
	N_u:=u(U\setminus \mathcal R_u),
\end{align} also $\mathcal L^2(N_u)=0$, since $u$ is Lipschitz. We also denote by $F_u\subset \R^2$ the set defined as
\begin{align}\label{fu}
	F_u:=\{z\in \R^2:u^{-1}(z) \text{ is a set with finite cardinality}\}.
\end{align}
Again, thanks to area formula \eqref{eqn:area}, we get that $\R^2\setminus F_u$ is $\mathcal L^2$ neglibible. We finally denote 
\begin{align}\label{eu}
	E_u:=F_u\setminus N_u.
\end{align}

\subsection{Main result}\label{sbsct:MainResult}

In order to introduce and prove our first main result we define the following operation that we call \textit{injection}.  Given an element of $\sigma_i\in \Sigma(n)$ in the  basis of the free group $F(n)$, it will be called an \textit{i-monoid} any object of the type $\beta  \sigma_i \beta^{-1}$ for $\beta\in F(n)$.  In other terms, an $i$-monoid is a conjugated of either $\sigma_i$ or $\sigma_i^{-1}$.

\begin{definition}\label{def_inj}
{\rm	Let $\gamma\in F(n)$  be a word.  Fix $(k_1,\ldots,k_n) \in \N^n$ an $n$-tuple of natural numbers.  We say that the curve $\gamma'\in F(n)$ is a \textit{$(k_1,\ldots,k_n)$-injection in $\gamma$} if the word  $\gamma'$ can be obtained by inserting $k_i$ times an $i$-monoid  into the word  $\gamma$ (for all $i=1,\ldots,n$).}
\end{definition}

For instance,  given $\gamma=\sigma_1\sigma_2\in F(3)$ then $\gamma'=\sigma_1\sigma_3\sigma_2$ is a $(0,0,1)$-injection in $\gamma$.  The curve $\gamma''=\sigma_1$ instead is a $(0,1,0)$-injection of $\sigma_2$ in $\gamma$ (we inserted $\sigma_2^{-1}$ into the word of $\gamma$).  But it is also a $(0,3,0)$-injection if we suitably insert $\sigma_2, \sigma_2^{-1},\sigma_2^{-1}$ in the word representing $\gamma$.  So there is not a unique way for seeing a curve $\gamma'$ as an injection in $\gamma$.  The null word is a $(1,1,0)$-injection of $\gamma$ (as well as a $(1+2h,1+2h,2h)$-injection for all $h\in \N$).

Morerover, the curve $\sigma_1\sigma_3\sigma_2^{-1}$ is a $(0,0,1)$-injection in $\gamma$, as it can be obtained by inserting the monoid $\sigma_2^{-1}\sigma_3\sigma_2$ at the end of the word representing $\gamma$; finally $\sigma_1\sigma_2\sigma_1^{-1}\sigma_2^{-1}\in F(2)$ is a $(1,0)$-injection in $\sigma_1$, as it is obtained from $\sigma_1$ by inserting the $1$-monoid $\sigma_2\sigma_1^{-1}\sigma_2^{-1}$.

\begin{remark}\label{rmk:sumofinj}
If $\gamma''$ is a $(k_1,\ldots,k_n)$-injection of $\gamma'$ and $\gamma'$ is a $(p_1,\ldots,p_n)$-injection of $\gamma$ then it is immediate that $\gamma''$ is also a $(k_1+p_1,\ldots,k_n+p_n)$-injection of $\gamma$.
\end{remark}

\begin{remark}\label{rem_attheend}
We also remark that the notion of injection in Definition \ref{def_inj} is equivalent to the following:  the curve $\gamma'$ is a \textit{$(k_1,\ldots,k_n)$-injection in $\gamma$} if the word representing $\gamma'$ can be obtained by inserting $k_i$ times a $i$-monoid \textit{at the end} of  the word representing $\gamma$ (for all $i=1,\ldots,n$). Indeed, if $\gamma'$ is obtained from $\gamma=\gamma_1\dots\gamma_k$ by inserting in it the $i$-monoid $\beta\sigma_i\beta^{-1}$, then it holds
$$\gamma'=\gamma_1\dots\gamma_h\beta\sigma_i\beta^{-1}\gamma_{h+1}\dots\gamma_k,$$
for some $h\in\{1,\dots,k\}$. On the other hand, by setting $\eta= (\beta^{-1}\gamma_{h+1}\dots\gamma_k)^{-1}$, we see that $\gamma'$ can also be obtained from $\gamma$ as
$$\gamma'=\gamma\eta\sigma_i\eta^{-1}.$$
\end{remark}

We can now define the main object of our concerns. Let $\gamma:\Su\rightarrow \R^2$ be a fixed curve satisfying Property (P). We recall that 
$\Gamma:=\gamma(\Su),$
and that the points $P_i$ are chosen in any bounded connected components $U_i$ of $\R^2\setminus \Gamma$ (which are $n$ components, $n\in \mathbb N$).

\begin{remark}\label{rem_deppoints}
Notice that the group $\pi_1(\R^2\setminus \{P_1,\dots,P_n\})$ and the equivalence class of $\gamma$ in it do not depend on the specific choice of the $P_i\in U_i$.
\end{remark}

 We define the following family of natural numbers.
\begin{equation}
\mathrm{Ad}(\gamma):=\left\{(k_1,\ldots,k_n)\in \N^n \ | \ \textit{the null word is a $(k_1,\ldots,k_n)$-injection of $\gamma$} \right\}.
\end{equation}
From Remark \ref{rem_deppoints} the set $\mathrm{Ad}(\gamma)$ depends only on the equivalence class of $\gamma$ in $\pi_1(\R^2\setminus \{P_1,\dots,P_n\})$ and hence not on the choice of the points $P_i$'s.  

\begin{remark}\label{rmk:NoDepOnGen} Notice also that,  if we change the set of generator of $\pi_1(\R^2\setminus \{P_1,\dots,P_n\})$ from $\sigma_i$ to some $\bar{\sigma}_i\equiv\sigma_i$ in $\pi_1$ we do now alter the set $\mathrm{Ad}(\gamma)$. 
\end{remark}

Remark \ref{rem_deppoints} and \ref{rmk:NoDepOnGen} ensures that the set is well defined and depends only on intrinsic properties of the curve $\gamma$ and not on its representation.  That being established our main Theorem is now the following. 
\begin{theorem}\label{teo_main}
Let $\gamma:\Su \rightarrow \R^2$ be a Lipschitz curve satisfying hypothesis (P).  Then
\begin{align}\label{eq_main}
\mathcal{A}(\gamma)=\min\left\{\left.\sum_{i=1}^n k_i |U_i|  \ \ \right| \ \ (k_1,\ldots,k_n)\in \mathrm{Ad}(\gamma) \right\}
\end{align}
where $ |U_i| $ is the standard Lebesgue measure of the connected component $U_i$ of $\R^2\setminus \gamma(\Su)$.
\end{theorem}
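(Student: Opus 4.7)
The plan is to prove the equality by establishing both inequalities separately. The upper bound $\mathcal{A}(\gamma) \leq \sum_i k_i |U_i|$ for every $(k_1,\ldots,k_n) \in \mathrm{Ad}(\gamma)$ will be constructive, relying on the subadditivity and null-curve machinery already developed (Lemma \ref{lem_subadd}, Proposition \ref{prop:nullcurve}, Lemma \ref{lem_null0}, Lemma \ref{teo_Ageneric}). The matching lower bound will rest on a key topological statement, announced in the introduction, relating the multiplicities of any Lipschitz extension of $\gamma$ to the combinatorics encoded in $\mathrm{Ad}(\gamma)$. As a byproduct, $\mathrm{Ad}(\gamma)$ will be seen to be nonempty, so that the minimum in \eqref{eq_main} is indeed attained.

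For the upper bound, fix $(k_1,\ldots,k_n) \in \mathrm{Ad}(\gamma)$. By Remark \ref{rem_attheend}, the null word is obtained from $\gamma$ by concatenating at its end a sequence $\mu_1 \cdots \mu_N$ of monoids, with exactly $k_i$ of them being $i$-monoids. Hence in $F(n)$ one has $\gamma = \mu_N^{-1} \cdots \mu_1^{-1}$, a concatenation of $\sum_i k_i$ monoids (inverses of $i$-monoids being again $i$-monoids). I would represent each factor $\mu_\ell^{-1}$, of type $j$ say, by a geometrically concrete Lipschitz loop consisting of a connecting path $\beta'_\ell$ from $P$ to a fixed point on $\partial U_j$, followed by a Lipschitz parametrization of $\partial U_j$ with the appropriate orientation, followed by $(\beta'_\ell)^{-1}$. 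The Riemann mapping theorem applied to the simply connected bounded set $U_j$ provides a Lipschitz map of the closed disc onto $\overline{U_j}$ whose area equals exactly $|U_j|$; the connecting pieces $\beta'_\ell \star (\beta'_\ell)^{-1}$ are null curves contributing no area (Lemma \ref{lem_null0}). Concatenating these fillings and using subadditivity (Lemma \ref{lem_subadd}), together with Lemma \ref{teo_Ageneric} and Proposition \ref{prop:nullcurve} to move freely between $\gamma$ and its homotopic representative as concatenation of such simple loops, yields $\mathcal{A}(\gamma) \leq \sum_{i=1}^n k_i |U_i|$.

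For the lower bound, let $u \in C^{0,1}(D;\R^2)$ with $u|_{\Su} = \gamma$. By the area formula \eqref{eqn:area},
\begin{align*}
\int_D |\det(\nabla u)|\d x \;=\; \int_{\R^2} \mathrm{mul}(u,D,z)\,\d z \;\geq\; \sum_{i=1}^n \int_{U_i} \mathrm{mul}(u,D,z)\,\d z.
\end{align*}
Since $\mathrm{mul}(u,D,\cdot)$ takes values in $\N$, its essential infimum $\underline{m}_i$ on each $U_i$ is attained on a positive-measure subset of $U_i \cap E_u$; pick such $z_i$. The crucial step is to prove the statement hinted at in the introduction: \emph{for every choice of regular values $z_i \in U_i \cap E_u$, the tuple $(\#u^{-1}(z_1),\ldots,\#u^{-1}(z_n))$ lies in $\mathrm{Ad}(\gamma)$}. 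Granting this, $(\underline{m}_1,\ldots,\underline{m}_n) \in \mathrm{Ad}(\gamma)$ and
\begin{align*}
\int_D |\det(\nabla u)|\d x \;\geq\; \sum_{i=1}^n \underline{m}_i |U_i| \;\geq\; \min\left\{\sum_{i=1}^n k_i|U_i|\; :\; (k_1,\ldots,k_n)\in\mathrm{Ad}(\gamma)\right\},
\end{align*}
and passing to the infimum over $u$ concludes the lower bound and the nonemptiness of $\mathrm{Ad}(\gamma)$. To prove the crucial statement I would excise from $D$ pairwise disjoint small closed balls $\overline{B}_\epsilon(x)$ around every preimage $x \in \bigcup_i u^{-1}(z_i)$, obtaining a domain $D_\epsilon$ whose boundary is $\Su$ together with $\sum_i \#u^{-1}(z_i)$ small circles. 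Regularity of $z_i$ ensures that, for $\epsilon$ small, $u$ sends each such circle around $x \in u^{-1}(z_i)$ to a loop of winding $\mathrm{sign}(\det\nabla u(x)) = \pm 1$ about $z_i$ and zero about the other $z_j$'s. After connecting this small loop back to $P$ via a path inside $u(D_\epsilon) \subseteq \R^2 \setminus \{z_1,\ldots,z_n\}$, it represents a conjugate of $\sigma_i^{\pm 1}$, i.e.\ an $i$-monoid. Since $u|_{\partial D_\epsilon}$ is nullhomotopic in $\R^2 \setminus \{z_1,\ldots,z_n\}$, writing $\partial D_\epsilon$ as a concatenation of $\gamma$ with these small-circle loops joined by connecting paths in $D_\epsilon$, and using Lemma \ref{teo_2.18} to legitimize the free-homotopy bookkeeping, yields in $F(n)$ an identity $\text{null} = \gamma \cdot \mu_1 \cdots \mu_N$ in which exactly $\#u^{-1}(z_i)$ of the factors are $i$-monoids; that is precisely the definition of $\mathrm{Ad}(\gamma)$.

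The main obstacle is the topological bookkeeping in this crucial step: tracking the orientation and conjugacy class of each small-circle loop once it is connected to the basepoint through non-unique paths in $D_\epsilon$, and showing that the resulting word has the required combinatorial structure independently of those choices and indeed matches Definition \ref{def_inj}. A secondary, more concrete, difficulty lies in the constructive upper bound: realizing each abstract monoid $\beta\sigma_i^{\pm}\beta^{-1}$ as a Lipschitz loop that is the boundary of a Riemann-map filling of $U_i$ with area exactly $|U_i|$ requires an isotopy inside $\R^2 \setminus \{P_j : j \neq i\}$ together with a careful insertion of null connecting curves in the sense of Definition \ref{def:null_curves}.
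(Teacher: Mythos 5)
Your overall strategy mirrors the paper's: split into lower and upper bounds, reduce the lower bound to the combinatorial claim that the tuple of preimage counts $(\#u^{-1}(z_1),\ldots,\#u^{-1}(z_n))$ at generic $z_i\in U_i$ always lies in $\mathrm{Ad}(\gamma)$, and prove the upper bound constructively by writing $\gamma$ as a product of monoids via Remark \ref{rem_attheend} and invoking subadditivity and the exact area of Jordan fillings. The upper bound is essentially the paper's Proposition \ref{prop:upperbound}; one small imprecision is that the Riemann map of $U_j$ need not be Lipschitz up to $\partial U_j$, so one should cite Proposition \ref{Riemann_plateau} (which obtains $\mathcal A(\tau_j)=|U_j|$ by a rescaling limit) rather than claim a Lipschitz filling outright.

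Where you genuinely depart from the paper is in the proof of the crucial combinatorial lemma. The paper (Lemma \ref{lem:Crucial}) sweeps a shrinking homotopy $\Phi(t,\cdot)$ from $\Su$ down to a tiny disc $\tilde D$ avoiding all preimages, designed so that the moving boundary crosses the preimages one at a time; each crossing is analyzed via Corollary \ref{cor2.35} and Lemma \ref{lem_2.33} and contributes a single monoid, so that the word for $\gamma$ is built sequentially, one injection per crossing. You instead excise all the preimages at once, observe that $u$ restricted to the punctured domain $D_\epsilon$ is nullhomotopic in $\R^2\setminus\{z_1,\ldots,z_n\}$, and use the free-group relation coming from $\pi_1(D_\epsilon)$ together with local degree computations to show each boundary circle maps to a conjugate of $\sigma_i^{\pm1}$, reading off the whole word in parallel. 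Both routes encode the same topological fact (the boundary loop of a punctured disc is a product of small loops around the punctures); the paper's sequential version is somewhat more hands-on and self-contained given its accumulated machinery, while your parallel excision is arguably cleaner at the conceptual level but requires a bit more bookkeeping about orientations of boundary circles and about the choice of connecting paths in $D_\epsilon$ (which you correctly flag as the main technical burden, corresponding to what Lemma \ref{lem_2.33} and Corollary \ref{cor2.35} do in the paper). Both proofs are valid.
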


\subsection{Multiplicity and winding numbers}
In this section we describe how the degree theory relates with the classes of curves introduced in Section \ref{sec:classes}. This is necessary in order to prepare to the proof of Theorem \ref{teo_main}. 

We recall that $\gamma:\Su\rightarrow \R^2$ is a fixed curve satisfying (P) and that points $P_i$ have been selected according to Definition \ref{def_puntibase}.

\begin{lemma}\label{lem233}
	Let $U\subset\R^2$ be a bounded and simply connected domain  and let $\varphi:\Su\rightarrow \partial U$ a Lipschitz parametrization of $\partial U$. Let $u:U\rightarrow \R^2$ be a Lipschitz map, and assume that  $u\circ \varphi:\Su\rightarrow \R^2$ is a curve in $\R^2\setminus \{P_1,\dots,P_n\}$ with
	\begin{align}
		\mathrm{mul}(u,U,P_j)=0\qquad \qquad \forall j=1,\dots,n.
	\end{align}
	Then  $\jump{u\circ \varphi}=\jump{1}$. 
\end{lemma}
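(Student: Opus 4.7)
The plan is to produce a Lipschitz homotopy in $\R^2\setminus\{P_1,\dots,P_n\}$ between $u\circ\varphi$ and a constant curve; Lemma \ref{teo_2.18} will then give $\jump{u\circ\varphi}=\jump{1}$, since a constant curve manifestly has trivial conjugate class. The crucial observation is that under the hypotheses $u(\overline U)$ already avoids the punctures: the multiplicity assumption forces $u(U)\cap\{P_1,\dots,P_n\}=\emptyset$, while from $u\circ\varphi(\Su)\subset\R^2\setminus\{P_1,\dots,P_n\}$ and the fact that $\varphi$ surjects onto $\partial U$ we obtain $u(\partial U)\subset\R^2\setminus\{P_1,\dots,P_n\}$ as well. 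Hence $u(\overline U)\subseteq\R^2\setminus\{P_1,\dots,P_n\}$, and any homotopy of $\varphi$ through $\overline U$ will, after composition with $u$, stay outside the puncture set.

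To realize this idea, I would first construct a Lipschitz extension $\Psi:\overline D\to\overline U$ of $\varphi$, i.e.\ with $\Psi|_{\Su}=\varphi$. Using $\Psi$, define
\begin{equation*}
H:\Su\times[0,1]\to\R^2,\qquad H(\theta,t):=u\bigl(\Psi((1-t)\theta)\bigr).
\end{equation*}
Then $H$ is Lipschitz as a composition of Lipschitz maps, $H(\cdot,0)=u\circ\varphi$, and $H(\cdot,1)\equiv u(\Psi(0))$ is constant. For $(\theta,t)\in\Su\times[0,1]$ the point $\Psi((1-t)\theta)$ belongs to $\overline U=U\cup\partial U$: if it lies in $U$, then its $u$-image avoids every $P_j$ by the multiplicity hypothesis, while if it lies in $\partial U$, then it equals $\varphi(\theta')$ for some $\theta'\in\Su$ and its $u$-image is $(u\circ\varphi)(\theta')\notin\{P_1,\dots,P_n\}$. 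Thus $H$ takes values in $\R^2\setminus\{P_1,\dots,P_n\}$, and Lemma \ref{teo_2.18} concludes the proof.

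The main obstacle is the construction of the Lipschitz extension $\Psi$. Since $\partial U$ is a Lipschitz curve (being the image of $\Su$ under the Lipschitz map $\varphi$), one can invoke the standard fact that a bounded simply connected planar domain bounded by a Lipschitz Jordan curve is bi-Lipschitz homeomorphic to $\overline D$; composing such a chart with a bi-Lipschitz self-homeomorphism of $\Su$ matching the two boundary parametrizations yields $\Psi$. Alternatively, one can avoid the exact matching at the boundary by remarking that a Lipschitz reparametrization of $u\circ\varphi$ is Lipschitz-homotopic to $u\circ\varphi$ itself (again via Lemma \ref{teo_2.18}, applied to the straight-line homotopy between the two parametrizations on $\partial U$), so it suffices to produce \emph{any} Lipschitz $\Psi:\overline D\to\overline U$ with $\Psi(\Su)=\partial U$ and then reparametrize.
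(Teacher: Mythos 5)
Your argument is essentially the paper's own proof: build a Lipschitz homotopy inside $\overline U$ from $\varphi$ to a constant, push it forward by $u$, and invoke Lemma \ref{teo_2.18}, the key point being that $u(\overline U)$ avoids $\{P_1,\dots,P_n\}$ (by the multiplicity hypothesis on $U$ and the hypothesis on $u\circ\varphi$ for $\partial U$), so the pushed-forward homotopy stays in $\R^2\setminus\{P_1,\dots,P_n\}$. You spell out this avoidance observation and the construction of the homotopy via a Lipschitz extension $\Psi:\overline D\to\overline U$ somewhat more explicitly than the paper, which simply asserts the homotopy's existence ``by hypothesis on $U$,'' but there is no difference in substance.
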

\begin{proof}
	By hypothesis on $U$ we can build a Lipschitz homothopy $\Phi:[0,1]\times \Su\rightarrow \overline U$ such that $\Phi(0,\cdot)=\varphi(\cdot)$  and $\phi(1,\cdot)$ is a constant point in $U$. The thesis then follows by applying Lemma \ref{teo_2.18} with $\alpha=u\circ \Phi(0,\cdot)$ and $\beta=u\circ\Phi(1,\cdot)$, the last loop belonging to $\jump{1}$.
\end{proof}

\begin{lemma}\label{lem_2.33}
	Let $U\subset\R^2$ and $\varphi$ be as in Lemma \ref{lem233}. Let $u:U\rightarrow \R^2$ be a Lipschitz map such that for all $j=1,\dots,n$, it holds $P_j\in E_u$, and assume that  $u\circ \varphi:\Su\rightarrow \R^2$ is a curve in $\R^2\setminus \{P_1,\dots,P_n\}$ satisfying 
	\begin{align}\label{mult_delta}
		\mathrm{mul}(u,U,P_j)=\delta_{ij}\qquad \qquad \forall j=1,\dots,n,
	\end{align}
for some $i\in\{1,\dots,n\}$.
	Then either $u\circ\varphi\in \jump{\sigma_i}$ or $u\circ\varphi\in \jump{\sigma_i^{-1}}$. 
\end{lemma}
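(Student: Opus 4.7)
The idea is to localize the curve $u\circ\varphi$ near the unique preimage of $P_i$ and then identify its conjugacy class via a winding-number computation. Since $\mul(u,U,P_i)=1$ there is a single point $x_0\in U$ with $u(x_0)=P_i$; as $P_i\in E_u=F_u\setminus N_u$, necessarily $x_0\in\mathcal R_u$. By continuity of $u$, choose $\delta>0$ so small that $\overline{B_\delta(x_0)}\subset U$ and $u(\overline{B_\delta(x_0)})\subset B_\rho(P_i)$ for some $\rho>0$ with $\overline{B_\rho(P_i)}\cap\{P_j\}_{j\neq i}=\emptyset$. Because $\mul(u,U,P_j)=0$ for $j\neq i$ and $u^{-1}(P_i)\cap U=\{x_0\}\subset B_\delta(x_0)$, the restriction of $u$ to $U\setminus B_\delta(x_0)$ takes values in $\R^2\setminus\{P_1,\dots,P_n\}$.

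Next, exploiting that $U$ is simply connected with Lipschitz boundary, I would construct a Lipschitz homotopy $H:[0,1]\times\Su\to U\setminus B_\delta(x_0)$ from $\varphi$ to a Lipschitz parametrization $\phi:\Su\to\partial B_\delta(x_0)$ preserving counterclockwise orientation (a standard annular retraction, since $U\setminus B_\delta(x_0)$ is a topological annulus). Composing with $u$ produces a Lipschitz homotopy $u\circ H:[0,1]\times\Su\to\R^2\setminus\{P_1,\dots,P_n\}$ between $u\circ\varphi$ and $u\circ\phi$, and Lemma \ref{teo_2.18} then gives $\jump{u\circ\varphi}=\jump{u\circ\phi}$.

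It remains to identify $\jump{u\circ\phi}$. The curve $u\circ\phi$ lies entirely in the punctured disk $B_\rho(P_i)\setminus\{P_i\}$, whose fundamental group is $\mathbb{Z}$; by the choice of $B_\rho(P_i)$ and by Definition \ref{def_sigma}, the inclusion $B_\rho(P_i)\setminus\{P_i\}\hookrightarrow\R^2\setminus\{P_1,\dots,P_n\}$ sends a positively oriented generator of this $\mathbb{Z}$ into the conjugacy class $\jump{\sigma_i}$. Hence $\jump{u\circ\phi}=\jump{\sigma_i^k}$, where $k\in\mathbb{Z}$ is the winding number of $u\circ\phi$ around $P_i$. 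Applying Lemma \ref{lemma_linkdeg_gen} to $u$ on $B_\delta(x_0)$ yields
\[
k=\text{\rm link}(u\circ\phi,P_i)=\text{\rm deg}(u,B_\delta(x_0),P_i)=\text{\rm sign}(\det\nabla u(x_0)),
\]
since $x_0$ is the unique regular preimage of $P_i$ inside $B_\delta(x_0)$.

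The main obstacle is to exclude $k=0$, i.e., the degenerate possibility $\det\nabla u(x_0)=0$. If this could occur compatibly with $\mul(u,U,P_i)=1$, then $u\circ\varphi$ would be null-homotopic in $\R^2\setminus\{P_1,\dots,P_n\}$ rather than conjugate to $\sigma_i^{\pm1}$. Ruling this out---for instance by noting that $P_i$ may be taken to be a regular value of $u$ inside the open component $U_i$, which holds for a.e.\ choice of $P_i\in U_i$ by the Lipschitz version of Sard's theorem---forces $k\in\{-1,+1\}$, and thus either $u\circ\varphi\in\jump{\sigma_i}$ or $u\circ\varphi\in\jump{\sigma_i^{-1}}$, as required.
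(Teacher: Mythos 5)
Your argument --- retract $\varphi$ through $U\setminus\overline{B_\delta(x_0)}$ onto a small circle around $x_0$ and read off the conjugacy class from the winding of $u\circ\phi$ around $P_i$ --- is essentially the route the paper takes: it contracts $\varphi$ toward the single preimage $u^{-1}(P_i)$, stops just short, lands in $B_r(P_i)\setminus\{P_i\}$, and normalizes onto $\partial B_r(P_i)$. The geometric construction is the same, differing only in whether one stops at a small circle around $x_0$ or just before reaching $x_0$.

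Where you (and the paper) need care is the claim $|k|=1$. Your chain $k=\text{\rm link}(u\circ\phi,P_i)=\text{\rm deg}(u,B_\delta(x_0),P_i)=\text{\rm sign}(\det\nabla u(x_0))$ applies Lemma~\ref{lemma_linkdeg_gen} pointwise at $P_i$, but that lemma identifies $\text{\rm link}$ with the pointwise degree sum only for a.e.\ $z$, and the fixed $P_i$ may lie in the exceptional set. The winding number $k$ is the \emph{topological} degree --- equal to the a.e.\ pointwise degree for $y$ near $P_i$, not to the value of the sum formula at $P_i$ itself --- and it need not agree with $\text{\rm sign}(\det\nabla u(x_0))$ when the Jacobian degenerates. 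The obstruction is therefore not only $k=0$: for $u(z)=z^2$ the origin has a single (Lebesgue-point) preimage with vanishing Jacobian, yet the boundary curve winds twice, so $k=2\neq\text{\rm sign}(\det\nabla u(x_0))=0$. You are right that something must be excluded, but your diagnosis of the possible failure is too narrow, and your proposed remedy --- re-choosing $P_i$ by Sard --- would change the lemma, since the hypotheses $\mathrm{mul}(u,U,P_i)=1$ and $P_i\in E_u$ are tied to the given $P_i$ and may break after re-choosing. The paper's proof handles this by asserting $|\text{\rm deg}(u,U,P_j)|=\delta_{ij}$ directly from the multiplicity hypothesis, which quietly presumes the nondegeneracy of $\det\nabla u$ at the preimage (or a stronger reading of $\mathcal R_u$/$E_u$ than mere Lebesgue points of $\nabla u$); so both proofs hinge on the same point, and neither fully resolves it, but you are at least explicit that a nondegeneracy step is required.
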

Here, we recall that  $\sigma_i$'s are the loops of the basis $\Sigma(n)$ defined in \eqref{def_Sigma}. 
\begin{proof}
	From \eqref{mult_delta} it follows that 
	$$|\mathrm{deg}(u,U,P_j)|=\delta_{ij},$$
	and hence, from Lemma \ref{lemma_linkdeg_gen} it follows that 
	\begin{align}\label{link_hat}
		\left|\text{{\rm deg}}_{\partial U}\left(\frac{u-P_j}{|u-P_j|}\right)\right|=|\text{\rm link}(u\res \partial U,P_j)|=\delta_{ij}\qquad \qquad \forall j=1,\dots,n.
	\end{align}
	By hypothesis, we can built a Lipschitz homotopy $\phi:[0,1]\times \Su\rightarrow \overline U$ such that $\phi(0,\cdot)=\varphi(\cdot)$, and $\phi(1,\cdot)\equiv u^{-1}(P_i)$. Further, we can design $\phi$ in such a way that $  P_i) \notin\phi([0,1),\Su)$; in particular,  $u\circ\phi$ is an homotopy between $u\circ\varphi$ and the constant $P_i$ and for $\overline t<1$ large enough $u\circ \phi(\overline t,\cdot)$ is a Lipschitz curve in $B_r(P_i)$ never passing by $P_i$. We can then normalize it, and finally find a Lipschitz homotopy between $u\res\partial U$ and a curve $\beta:\Su\rightarrow \partial B_r(P_i)$. Now, from \eqref{link_hat} we infer that $\text{{\rm deg}}_{\partial B_r(P_i)}(\beta)=\pm 1$, and then there exists a homotopy between $\beta$ and either $\widehat \sigma_i$ or $\widehat \sigma_i^{-1}$ (where $\widehat \sigma_i$ is the constant speed counterclockwise parametrization of $\partial B_r(P_i)$ as in Definition \ref{def_sigma}). 
	
	Eventually, combining the previous homotopies, we can build a homotopy between $u\circ \varphi$ and either $\widehat \sigma_i$ or $\widehat \sigma_i^{-1}$; this implies, thanks to Lemma \ref{teo_2.18}, that either  $\jump{u\circ\varphi}=\jump{\sigma_i}$ or $\jump{u\circ\varphi}=\jump{\sigma_i^{-1}}$, that is the thesis.
\end{proof}

\begin{prop}\label{tecnical}
	Let $\nu,\eta\in C^{0,1}(\Su;\R^2\setminus \{P_1,\dots,P_n\})$ be such that $\gamma(\theta)=\eta(\theta')$ for some $\theta,\theta'\in \Su$. Then there exists a word $\beta\in F(n)$ such that 
	\begin{align}
		\jump{\nu\star\eta}=\jump{\nu\beta\eta\beta^{-1}}.
	\end{align}
\end{prop}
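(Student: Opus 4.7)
The plan is to express the concatenation $\nu\star\eta$ as a product in $F(n)$ of conjugates of the canonical representatives of $\nu$ and $\eta$ — obtained by bringing both loops to a common base via a single auxiliary path — and then to rearrange this product into the target shape $\nu\beta\eta\beta^{-1}$ by a single conjugation, which preserves the generic class by Remark \ref{rem_coniugazione}. Denote by $p:=\nu(\theta)=\eta(\theta')$ the common meeting point of the two loops.

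First I would reparametrize $\nu$ and $\eta$ on $\Su$ so that both send $(1,0)$ to $p$; call the resulting loops $\nu_p$ and $\eta_p$. Since these reparametrizations are free Lipschitz homotopies, Lemma \ref{teo_2.18} gives $\jump{\nu_p}=\jump{\nu}$ and $\jump{\eta_p}=\jump{\eta}$, and up to a rotation of $\Su$ the concatenation $\nu\star\eta$ coincides with $\nu_p\star\eta_p$. Then I would pick any Lipschitz path $\mu:[0,1]\to\R^2\setminus\{P_1,\dots,P_n\}$ from $P$ to $p$. By Definition \ref{def_genclas} applied with $\mu$ in the role of the auxiliary path, the based loops $\mu\star\nu_p\star\mu^{-1}$, $\mu\star\eta_p\star\mu^{-1}$ and $\mu\star(\nu_p\star\eta_p)\star\mu^{-1}$ represent respectively the conjugate classes $\jump{\nu}$, $\jump{\eta}$ and $\jump{\nu\star\eta}$. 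Hence there exist $\beta_1,\beta_2\in F(n)$ for which, as elements of $F(n)$,
\[
\mu\star\nu_p\star\mu^{-1}=\beta_1\nu\beta_1^{-1},\qquad \mu\star\eta_p\star\mu^{-1}=\beta_2\eta\beta_2^{-1}.
\]
Inserting the null segment $\mu^{-1}\star\mu$ between $\nu_p$ and $\eta_p$ (a homotopy rel.\ $P$) yields
\[
\mu\star(\nu_p\star\eta_p)\star\mu^{-1}
=(\mu\star\nu_p\star\mu^{-1})\cdot(\mu\star\eta_p\star\mu^{-1})
=\beta_1\nu\beta_1^{-1}\beta_2\eta\beta_2^{-1}
\]
in $F(n)$. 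Setting $\beta:=\beta_1^{-1}\beta_2$ and conjugating the whole word by $\beta_1^{-1}$ — which by Remark \ref{rem_coniugazione} does not alter the generic class — gives
\[
\jump{\nu\star\eta}=\jump{\beta_1\nu\beta_1^{-1}\beta_2\eta\beta_2^{-1}}=\jump{\nu\beta\eta\beta^{-1}},
\]
the desired identity.

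The main obstacle will be the first identification $\mu\star\nu_p\star\mu^{-1}=\beta_1\nu\beta_1^{-1}$ (and the analogue for $\eta_p$) in $F(n)$. This is a standard change-of-basepoint computation: the path $\mu$ runs from $P$ to $\nu_p((1,0))=p$, so by Definition \ref{def_genclas} the loop $\mu\star\nu_p\star\mu^{-1}$ is a based representative of $\jump{\nu_p}$, and Lemma \ref{teo_2.18} (free-homotopy invariance of the conjugate class) then gives $\jump{\nu_p}=\jump{\nu}$; hence the loop must be a conjugate of the canonical word $\nu$ in $F(n)$. Once this identification is secured, the remainder of the argument is purely algebraic manipulation in the free group combined with the conjugation-invariance of the generic class.
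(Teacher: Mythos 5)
Your proof is correct and follows essentially the same strategy as the paper's: change the basepoint to $P$ via an auxiliary path to the common intersection point $p$, read off the concatenation as a product in $F(n)$, and invoke conjugation-invariance of the generic class (Remark \ref{rem_coniugazione}). The only cosmetic difference is bookkeeping — you use a single path $\mu$ and two conjugating elements $\beta_1,\beta_2$ to absorb the ambiguity in the chosen word representatives, whereas the paper uses two paths $\alpha,\widehat\alpha$ and takes $\beta$ to be the word of the based loop $\alpha^{-1}\star\widehat\alpha$.
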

\begin{proof}
	Let $\alpha,\widehat \alpha:[0,\pi]\rightarrow \R^2\setminus \{P_1,\dots,P_n\}$ be two Lipschitz paths connecting $\nu(\theta)$ with $P$, in such a way that  the curves $\alpha^{-1}\star\nu\star\alpha$ and $\widehat \alpha^{-1}\overline\eta\star\widehat \alpha$ are based at $P$. In this way  $$\jump{\nu}=\jump{ \alpha^{-1}\star\nu\star\alpha},\qquad \jump{\eta}=\jump{\widehat \alpha^{-1}\star\eta\star\widehat\alpha}.$$
	Now, $\alpha^{-1}\star\widehat \alpha$ is based at $P$, so noting by $\beta$ a word representing it, we have
	$$\beta\eta\beta^{-1}\equiv(\alpha^{-1}\star\widehat \alpha)\star (\widehat \alpha^{-1}\star\eta\star\widehat\alpha)\star(\alpha^{-1}\star\widehat \alpha)^{-1}=\alpha^{-1}\star\eta\star\alpha.$$
	Hence
	$$\nu\beta\eta\beta^{-1}\equiv (\alpha^{-1}\star\nu\star\alpha)\star(\alpha^{-1}\star\eta\star\alpha)=\alpha^{-1}\star \nu\star \eta\star\alpha\in \jump{\nu\star\eta}.$$
\end{proof}

\begin{cor}\label{cor2.35}
	Assume that $\nu,\zeta\in C^{0,1}(\Su;\R^2\setminus \{P_1,\dots,P_n\})$ are such that there is a Lipschitz homothopy $\Psi:[0,1]\times \Su\rightarrow\R^2$ with $\Psi(0,\cdot)=\nu(\cdot)$ and $\Psi(1,\cdot)=\zeta(\cdot)$, and such that $\Psi(\cdot,\theta)\in \R^2\setminus \{P_1,\dots,P_n\}$ for some $\theta\in \Su$, then there is a word $\beta\in F(n)$ such that 
	\begin{align}\label{237}
		\jump{\zeta}=\jump{\nu\beta\eta\beta^{-1}},
	\end{align}
	where $\eta$ is a word representing\footnote{Here we are identifying again $[0,1]$ with $\Su$ when concatenating $\nu$ and $\eta$ with $\Psi(\cdot,\theta)$ } $\eta:=\nu^{-1}\star\Psi(\cdot,\theta) \star\zeta\star\Psi(\cdot,\theta)^{-1}$ (cf with Figure \ref{fig3:Corollary}).
\end{cor}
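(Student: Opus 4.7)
The plan is to reduce the corollary to Proposition~\ref{tecnical} by first exhibiting a free Lipschitz homotopy between $\nu\star\eta$ and $\zeta$ that avoids the punctures.

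First I would observe that $\eta:=\nu^{-1}\star\Psi(\cdot,\theta)\star\zeta\star\Psi(\cdot,\theta)^{-1}$ is a closed Lipschitz loop based at $\nu(\theta)$, entirely contained in $\R^2\setminus\{P_1,\dots,P_n\}$: indeed each of its four constituent pieces lies in the complement of the punctures (the seam $\Psi(\cdot,\theta)$ by hypothesis, and $\nu,\zeta$ by assumption). In particular $\eta$ has a well-defined generic class in the sense of Definition~\ref{def_genclas} and hence a representing word in $F(n)$. Since $\nu$ and $\eta$ both pass through $\nu(\theta)$, Proposition~\ref{tecnical} produces a word $\beta\in F(n)$ with
\[
\jump{\nu\star\eta}=\jump{\nu\beta\eta\beta^{-1}}.
\]

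Next I would show that $\jump{\nu\star\eta}=\jump{\zeta}$. Unwinding the definition of $\eta$,
\[
\nu\star\eta=\nu\star\nu^{-1}\star\Psi(\cdot,\theta)\star\zeta\star\Psi(\cdot,\theta)^{-1}.
\]
The subcurve $\nu\star\nu^{-1}$ is $1$-null in the sense of Definition~\ref{def:null_curves}, and admits a standard Lipschitz homotopy rel endpoints contracting it to the constant path at $\nu(\theta)$, whose image sits on the trace of $\nu$ and therefore in $\R^2\setminus\{P_1,\dots,P_n\}$. Concatenating with the identity on the remaining factors yields a free Lipschitz homotopy, avoiding the punctures, from $\nu\star\eta$ to $\Psi(\cdot,\theta)\star\zeta\star\Psi(\cdot,\theta)^{-1}$, and Lemma~\ref{teo_2.18} gives
\[
\jump{\nu\star\eta}=\jump{\Psi(\cdot,\theta)\star\zeta\star\Psi(\cdot,\theta)^{-1}}.
\]
The right-hand side is the path-conjugate of $\zeta$ by $\Psi(\cdot,\theta)$; shrinking $\Psi(\cdot,\theta)$ to its endpoint along its own reparametrizations furnishes another free Lipschitz homotopy, still supported in $\R^2\setminus\{P_1,\dots,P_n\}$, between $\Psi(\cdot,\theta)\star\zeta\star\Psi(\cdot,\theta)^{-1}$ and $\zeta$. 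A second application of Lemma~\ref{teo_2.18} then gives $\jump{\Psi(\cdot,\theta)\star\zeta\star\Psi(\cdot,\theta)^{-1}}=\jump{\zeta}$, and chaining the three identities proves \eqref{237}.

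The main obstacle is a bookkeeping issue: each free homotopy used above must be realised by an explicit Lipschitz map avoiding every $P_i$, in order to legitimately invoke Lemma~\ref{teo_2.18}. The contraction of $\nu\star\nu^{-1}$ is arranged to stay on the image of $\nu$, while the conjugation homotopy only sweeps along reparametrizations of the seam $\Psi(\cdot,\theta)$, which already lies in the complement of $\{P_1,\dots,P_n\}$ by hypothesis; both reductions are standard but need to be written out with care to ensure Lipschitz regularity in both variables.
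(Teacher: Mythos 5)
Your proposal is correct and follows essentially the same route as the paper: both establish $\jump{\nu\star\eta}=\jump{\zeta}$ (the paper states it directly as a membership in $\jump{\Psi(\cdot,\theta)\star\zeta\star\Psi(\cdot,\theta)^{-1}}=\jump{\zeta}$, while you unpack it into two explicit free Lipschitz homotopies and invoke Lemma~\ref{teo_2.18}), and both then conclude via Proposition~\ref{tecnical}. You apply Proposition~\ref{tecnical} first and chain the identities afterwards, but this is only a cosmetic reordering; your version simply makes explicit the homotopies and the $\R^2\setminus\{P_1,\dots,P_n\}$ containment that the paper leaves to the reader.
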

\begin{proof}
	Observe that
	\[
	\nu\star \eta=\nu\star \nu^{-1}\star\Psi(\cdot,\theta) \star\zeta\star\Psi(\cdot,\theta)^{-1}\in\jump{\Psi(\cdot,\theta) \star\zeta\star\Psi(\cdot,\theta)^{-1}}= \jump{\zeta}.
	\]
Then $\jump{\nu\star \eta}=\jump{\zeta}$.  Now Proposition \ref{tecnical} immediately implies the thesis.
\end{proof}
\begin{figure}
\begin{center}
\includegraphics[scale=0.6]{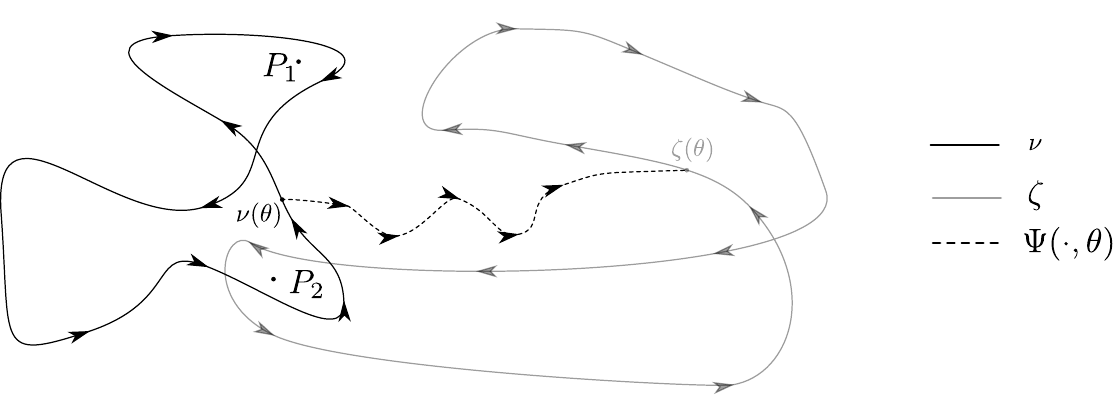}
\caption{A depiction of the construction of $\eta$ in Corollary \ref{cor2.35} in the case $n=2$: the function $\Psi$ identifies a curve $\Psi(\cdot,\theta):[0,1]\rightarrow\R^2\setminus \{P_1,P_2\}$ (the dotted line in the figure) connecting $\nu$ (the solid dark line) to $\zeta$ (the solid grey line) with starting and ending points $\Psi(0,\theta)=\nu(\theta)$,  $\Psi(1,\theta)=\zeta(\theta)$. The curve $\eta$ is then the suitable concatenation of $\nu$, $\zeta$ and $\Psi(\cdot,\theta)$ and their inverses.}\label{fig3:Corollary}
\end{center}
\end{figure}
\section{Proof of Theorem \ref{teo_main}}\label{sct:PfMain}

\subsection{Lower bound}\label{sbsct:LB}
In this section we show that 
$\mathcal A(\gamma)$ is larger or equal to the right-hand side of \eqref{eq_main}.

Let $u:D\rightarrow\R^2$ be a Lipschitz continuous map with $u\Big{|}_{\Su}=\gamma$.
We introduce the set
\[
\mathrm{Ad}(u):=\{(\#(u^{-1}(z_1)) ,  \ldots, \#(u^{-1}(z_n) )\in \mathbb{N}^n \ | \ z_i\in U_i\cap E_u\},
\]
where $E_u$ is defined in \eqref{eu} and we set $\#(\varnothing .  )=0$.
Then the following is in force:
\begin{lemma}\label{lem:Crucial}
For all $u:D\rightarrow\R^2$ Lipschitz functions with $u\Big{|}_{\Su}=\gamma$ it holds
\[
\mathrm{Ad}(u)\subset \mathrm{Ad}(\gamma).
\]
Moreover there exists an extremant $(k_1^*,\ldots,k_n^*)\in \mathrm{Ad}(u)$ such that for all $(k_1,\ldots,k_n)\in \mathrm{Ad}(u)$ it holds
\[
k_i^*\leq k_i \ \ \text{for all $i=1,\ldots,n$}.
\]
\end{lemma}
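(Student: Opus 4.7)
The plan is a ``Swiss cheese'' construction. Given a tuple $(N_1,\ldots,N_n)=(\#(u^{-1}(z_1)),\ldots,\#(u^{-1}(z_n)))\in \mathrm{Ad}(u)$, the goal is to exhibit the null word as an $(N_1,\ldots,N_n)$-injection of $\gamma$ by cutting small disks around each preimage $x_{i,j}$ of each $z_i$ and analyzing $u$ on the resulting punctured domain.

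Since each $U_i$ has positive Lebesgue measure while $E_u$ has full measure in $\R^2$ (by Rademacher's theorem, the area formula, and Lipschitz regularity), one first picks $z_i\in U_i\cap E_u$, so that $u^{-1}(z_i)=\{x_{i,1},\ldots,x_{i,N_i}\}$ is finite. Using continuity of $u$ and the finiteness of each $u^{-1}(z_l)$, one selects pairwise disjoint open disks $B_{i,j}\subset D$ around each $x_{i,j}$ small enough that $u^{-1}(z_i)\cap\overline{B_{i,j}}=\{x_{i,j}\}$ and $z_l\notin u(\overline{B_{i,j}})$ for every $l\ne i$. This gives $u(\partial B_{i,j})\subset \R^2\setminus\{z_1,\ldots,z_n\}$ and $\mul(u,B_{i,j},z_l)=\delta_{il}$. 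Since each $z_l$ lies in the same connected component of $\R^2\setminus\Gamma$ as $P_l$, an ambient isotopy inside $U_l$ moving $z_l$ to $P_l$ induces a canonical isomorphism between $\pi_1(\R^2\setminus\{z_1,\ldots,z_n\})$ and $\pi_1(\R^2\setminus\{P_1,\ldots,P_n\})$ sending the generator encircling $z_l$ to $\sigma_l$. Lemma \ref{lem_2.33} then gives $\jump{u|_{\partial B_{i,j}}}\in\{\jump{\sigma_i},\jump{\sigma_i^{-1}}\}$, so each boundary loop freely represents an $i$-monoid.

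Setting $D':=D\setminus\bigcup_{i,j}\overline{B_{i,j}}$, a planar domain with $N:=\sum_i N_i$ holes, a standard van-Kampen-type argument shows that $\Su$ is homotopic in $D'$ to a concatenation of the inner boundaries $\partial B_{i,j}$, each connected to a common base point $q\in D'$ by a fixed arc. Pushing this homotopy forward through $u\colon D'\to \R^2\setminus\{z_1,\ldots,z_n\}$, and using Lemma \ref{teo_2.18} to pass to generic classes, yields $\jump{\gamma}=\jump{m_1 m_2\cdots m_N}$ in $\pi_1(\R^2\setminus\{z_1,\ldots,z_n\})$, where $m_1,\ldots,m_N$ is a sequence of monoids containing exactly $N_i$ of type $i$ for each $i$. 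Reading this as $\gamma\equiv m_1\cdots m_N$ in $F(n)$ and right-multiplying by $m_N^{-1}\cdots m_1^{-1}$ (each factor again an $i$-monoid of the same type), Remark \ref{rem_attheend} displays the null word as an $(N_1,\ldots,N_n)$-injection of $\gamma$. Hence $\mathrm{Ad}(u)\subset \mathrm{Ad}(\gamma)$.

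For the minimal extremant, set $k_i^*:=\min\{\#(u^{-1}(z))\colon z\in U_i\cap E_u\}$, which is attained as a minimum of non-negative integers at some $z_i^*\in U_i\cap E_u$. Since in the definition of $\mathrm{Ad}(u)$ the points $z_i$ vary independently in their respective components, the tuple $(k_1^*,\ldots,k_n^*)$ lies in $\mathrm{Ad}(u)$ and satisfies $k_i^*\le k_i$ for every $(k_1,\ldots,k_n)\in \mathrm{Ad}(u)$. The main obstacle I anticipate is the Swiss cheese/van-Kampen step: realizing $\Su$ as an explicit product of loops around the holes with the correct orientations and connecting arcs, and transferring this relation cleanly between based and free homotopy classes via Lemma \ref{teo_2.18}, so that the resulting word in $F(n)$ has exactly the structure required to read off the claimed $(N_1,\ldots,N_n)$-injection.
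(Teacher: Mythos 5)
Your proof is correct, and it reaches the conclusion by a genuinely different technical route than the paper. Both arguments share the same core ingredients: pick $z_i\in U_i\cap E_u$ so that the fibers are finite, and use Lemma \ref{lem_2.33} to recognize the restriction of $u$ to a small loop encircling a single preimage point as an $i$-monoid. Where you differ is in how you assemble this local information into the global relation. The paper performs a \emph{progressive retraction}: it shrinks $\Su$ to a small disk $\tilde D$ avoiding all preimages, sweeps across one preimage point at a time, and at each crossing invokes Corollary \ref{cor2.35} to show that the generic class of $u$ restricted to the sweeping curve changes by multiplication with one monoid; iterating until the curve reaches $\partial\tilde D$ (whose image is null) produces the injection step by step. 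You instead remove \emph{all} disks $\overline{B_{i,j}}$ at once and invoke the standard description of $\pi_1$ of the $N$-holed disk---namely that $\partial D$ is, up to conjugation, a product of one loop per hole---then push this forward through $u$ to obtain $\jump{\gamma}=\jump{m_1\cdots m_N}$ directly. Your route avoids Corollary \ref{cor2.35} entirely and is arguably more transparent topologically, at the cost of outsourcing the combinatorics to the ``van Kampen'' fact about holed planar domains rather than building it up inductively as the paper does. Two minor points you should nail down if you were to write this out: (i) $\jump{\gamma}=\jump{m_1\cdots m_N}$ means the two words are conjugate in $F(n)$ (generic classes coincide with conjugacy classes, Remark \ref{rem_coniugazione}); writing $\gamma=\tilde\gamma(m_1\cdots m_N)\tilde\gamma^{-1}$ and right-multiplying by $\tilde\gamma m_N^{-1}\cdots m_1^{-1}\tilde\gamma^{-1}$ gives the null word as an $(N_1,\dots,N_n)$-injection of $\gamma$, since conjugates of $i$-monoids are again $i$-monoids; and (ii) the ambient-isotopy identification between $\pi_1(\R^2\setminus\{z_i\})$ and $\pi_1(\R^2\setminus\{P_i\})$ is exactly the content of Remark \ref{rem_deppoints} in the paper, so you are on safe ground there. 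The extremant part of your argument is identical to the paper's.
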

We show immediately how to derive the following Proposition starting from Lemma \ref{lem:Crucial}. 
\begin{prop}\label{prop:lowerbound}
For all Lipschitz curves $\gamma$ satisfying hypothesis (P) it holds
\[
\mathcal{A}(\gamma)\geq \min\left\{\left.\sum_{i=1}^n k_i |U_i|  \ \ \right| \ \ (k_1,\ldots,k_n)\in \mathrm{Ad}(\gamma) \right\}.
\]
\end{prop}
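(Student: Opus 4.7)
The plan is to take an arbitrary Lipschitz competitor $u:D\to\R^2$ with $u|_{\Su}=\gamma$ and bound $A(u)$ from below by $\sum_{i=1}^n k_i|U_i|$ for some $(k_1,\ldots,k_n)\in \mathrm{Ad}(\gamma)$; passing to the infimum over such $u$ then yields the desired estimate on $\A(\gamma)$. All the heavy lifting sits inside Lemma \ref{lem:Crucial}; once that is accepted, Proposition \ref{prop:lowerbound} becomes a short consequence of the area formula.

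First I would apply \eqref{eqn:area} and restrict the integral to the bounded components of $\R^2\setminus\Gamma$, discarding the contribution of $U_0$ and of the negligible set $\Gamma$:
\[
A(u)=\int_{\R^2}\mathrm{mul}(u,D,z)\,dz \;\geq\; \sum_{i=1}^n \int_{U_i}\#(u^{-1}(z))\,dz.
\]
Since $N_u$ has measure zero and $\R^2\setminus F_u$ is $\mathcal{L}^2$-negligible (cf.\ \eqref{nu}--\eqref{eu}), for a.e.\ $z\in U_i$ we have $z\in U_i\cap E_u$, so $\#(u^{-1}(z))$ is a genuine natural number. In particular, by fixing such a representative $z$ in $U_i$ and arbitrary points $z_j\in U_j\cap E_u$ for $j\neq i$, we exhibit a tuple of $\mathrm{Ad}(u)$ whose $i$-th entry equals $\#(u^{-1}(z))$.

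Next, I would invoke the second part of Lemma \ref{lem:Crucial}, producing a componentwise minimal extremant $(k_1^*,\ldots,k_n^*)\in \mathrm{Ad}(u)$. By the previous step, $k_i^*\leq \#(u^{-1}(z))$ for a.e.\ $z\in U_i$, hence
\[
\int_{U_i}\#(u^{-1}(z))\,dz\geq k_i^*|U_i|,
\]
and summing over $i=1,\ldots,n$ yields $A(u)\geq \sum_{i=1}^n k_i^*|U_i|$. Since the first part of Lemma \ref{lem:Crucial} gives $(k_1^*,\ldots,k_n^*)\in \mathrm{Ad}(u)\subset \mathrm{Ad}(\gamma)$, the right-hand side is bounded below by the minimum in \eqref{eq_main}. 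Taking the infimum over all Lipschitz $u$ with $u|_{\Su}=\gamma$ concludes.

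The main (and essentially only) obstacle is concentrated in Lemma \ref{lem:Crucial}, which asserts that for every admissible $u$ the multiplicity tuples it realizes already correspond to injections turning $\gamma$ into the null word; this is the retraction mechanism described informally after Theorem \ref{teo1_intro} and needs the machinery of Lemmas \ref{lem233}--\ref{lem_2.33} and Corollary \ref{cor2.35}. Once that lemma is granted, the present proposition is essentially a bookkeeping application of the area formula together with the componentwise minimality of $(k_1^*,\ldots,k_n^*)$.
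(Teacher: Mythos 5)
Your proposal is correct and follows the same approach as the paper: apply the area formula, invoke the componentwise-minimal extremant from Lemma \ref{lem:Crucial} (together with $\mathrm{Ad}(u)\subset\mathrm{Ad}(\gamma)$), and pass to the infimum over admissible $u$. You are in fact a touch more careful than the paper's own displayed computation, which writes $\int_D|\det\nabla u|\,dx=\sum_{i=1}^n\int_{U_i}\mathrm{mul}(u,D,z)\,dz$ where only the inequality $\geq$ is justified after discarding the $U_0$ contribution — the $\geq$ you use is what is needed and is the correct statement.
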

\begin{proof}
Let $u:D\rightarrow\R^2$ be a Lipschitz function with $u\Big{|}_{\Su}=\gamma$.  Let $(k_1^*,\ldots,k_n^*)\in \mathrm{Ad}(u)$ be the extremant.  Then $(k_1^*,\ldots,k_n^*)\in \mathrm{Ad}(\gamma)$ and $\mul(u,D,z)\geq k_i^*$ for all $z\in U_i$.  Therefore,  by the area formula
\begin{align*}
A(u)=&\int_D |\det (\nabla u)|\d x= \sum_{i=1}^n \int_{U_i} \mul(u,D,z)\d z\\
&\geq \sum_{i=1}^n k_i^* |U_i|\geq \min\left\{\left.\sum_{i=1}^n k_i |U_i|  \ \ \right| \ \ (k_1,\ldots,k_n)\in \mathrm{Ad}(\gamma) \right\}.
\end{align*}
\end{proof}
We now focus in the proof of Lemma \ref{lem:Crucial}.
\begin{proof}[Proof of Lemma \ref{lem:Crucial}]
Fix a family $(z_1,\ldots,z_n)\in U_1\times \ldots \times U_n$ such that for all $i=1,\dots,n$ we have $z_i\in E_u$.  Let $k_i:=\#(u^{-1}(z_i))$ and set $u^{-1}(z_i):=\{x_1^i, \ldots x_{k_i}^i\}$.   Let $\tilde{D}\subset D$ be a small disc contained in $D$ such that $ u^{-1}(z_i) \cap \tilde{D}=\varnothing .  $ for all $i=1,\ldots,n$.  Consider  an homothopy $\Phi:[0,1]\times \Su\rightarrow D$ between $\Su$ and $\partial \tilde{D}$.  Call $E_t\subset D$ the set bounded conneced component of $\R^2\setminus \Phi(t,\Su)$ (so that $E_0=D$ and $E_1=\tilde{D}$).  We design $\Phi$ so that $E_{t'}\subset E_t$ for all $t'\geq t$ and
\begin{align*}
 (1)& \ \Phi(t,\Su) \subset D\setminus \tilde{D} \ \ \ \text{for all $t\in (0,1)$};\\
(2) &\ \# (u^{-1}(z_i)\cap  \Phi(t,\Su))\leq 1\ \ \  \text{for all $i=1,\ldots,n$,  $t\in (0,1)$};\\
(3)& \ \text{if $u^{-1}(z_i)\cap  \Phi(t,\Su)\neq \varnothing .  $} \ \ \Rightarrow \ \ u^{-1}(z_j)\cap  \Phi(t,\Su)= \varnothing .   \ \text{for all $j\neq i$}.
\end{align*}
Let 
\begin{align*}
\tau:=\sup\left\{t\in [0,1] \ \left| \ \left(\bigcup_{i=1}^n u^{-1}(z_i) \right) \subset E_t \right. \right\}.
\end{align*}
\begin{figure}
\begin{center}
 \includegraphics[scale=0.76]{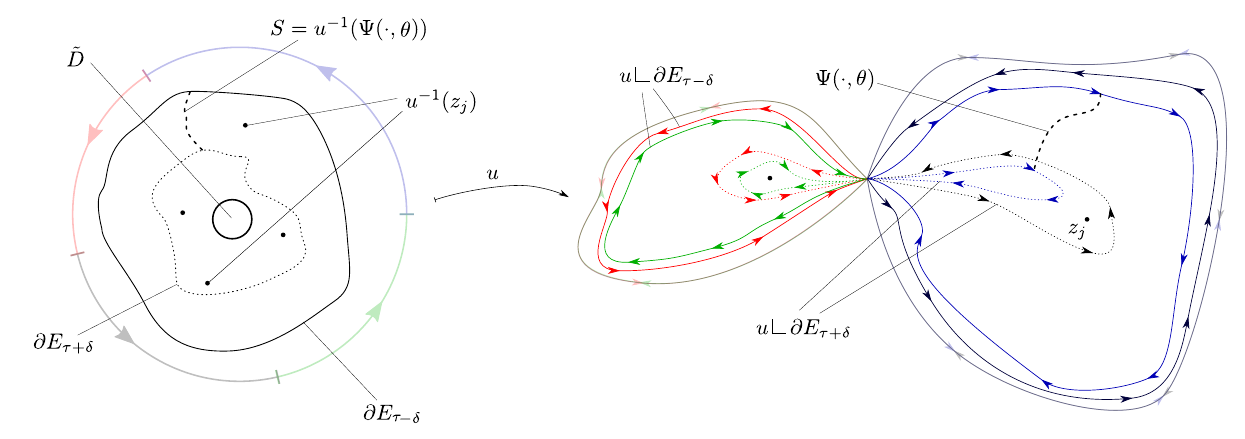}
\caption{A depiction of the argument in the proof of the lower bound,  Lemma \ref{lem:Crucial},  applied to the case of Figure \ref{fig:Example}.  Notice that $\tau$ is the first time in which $\partial E_{\tau}=\Phi(\tau,\Su)$ (not depicted here) hits $u^{-1}(z_j)$ and thus, by construction of $\Phi$: $\#(u^{-1}(z_j)\cap (E_{\tau-\delta}\setminus E_{\tau+\delta}))=1$ for some $\delta>0$.}\label{fir:LemCrucial}
\end{center}
\end{figure}
The way we have designed $\Phi$ (condition $(2)-(3)$) implies that,  for $t=\tau$,  there is a unique 
\[
x\in\left(\bigcup_{i=1}^n u^{-1}(z_i) \right)\cap \Phi(\Su,\tau).
\]  
So let $x\in u^{-1}(z_j) $ be such a unique point (for some $j$).  By continuity of the homothopy for some small $\delta>0$ we have $x\in E_{\tau-s}\setminus E_{\tau+s}$ (cf with Figure \ref{fir:LemCrucial}) for all $s<\delta$ and
\[
\left(\bigcup_{i=1}^n u^{-1}(z_i) \right) \setminus \{x\} \subset E_t \ \ \text{for $\tau<t<\tau+\delta$}.
\]
Call $\gamma_1:=u\res\partial E_{\tau+\delta}=u\res \Phi(\Su,\tau+\delta)$.  First we note that $u\res \partial E_{\tau-\delta}\in \jump{\gamma}$ by Lemma \ref{teo_2.18}.  We now want to apply Corollary \ref{cor2.35} with $\nu=u\res \partial E_{\tau-\delta}$ and $\zeta=\gamma_1$.  Observe that $u\res \partial E_{\tau-\delta}$ and $\gamma_1$ are linked by the Lipscthitz homotopy $\Psi:[0,1]\times \Su\rightarrow \R^2$ defined as 
\[
\Psi(t, \xi):=u\left(\Phi\left(t(\tau+\delta)+(1-t)(\tau-\delta),\xi\right)\right).
\] 
Clearly, by how $\Phi$ has been designed,  we can find $\theta\in \Su$ such that $\Psi(t,\theta)\in \R^2\setminus \{z_1,\ldots,z_n)$ for all $t\in [0,1]$.   Still by construction of $\Phi$ we can choose $\theta$ to be also such that
\[
S:=\{u^{-1}(\Psi(t,\theta)) \ | \ t\in[0,1]\}=\{\Phi(t(\tau+\delta)+(1-t)(\tau-\delta),\theta) \ | \ t\in [0,1]\}
\]
is a Lipscthitz curve inside $E_{\tau-\delta}\setminus E_{\tau+\delta}$.  Then we can apply Corollary \ref{cor2.35} to deduce that 
$$\jump{\gamma_1}=\jump{\gamma\beta\eta\beta^{-1}}$$
with \footnote{Up to the usual identification between $[0,1]$ with $\Su$.} $\eta:=\nu^{-1}\star\Psi(\cdot,\theta) \star\zeta\star\Psi(\cdot,\theta)^{-1}$.  Set now $U:=(E_{\tau+\delta}\setminus E_{\tau-\delta})\setminus S$ and observe that 
\begin{equation}
\mathrm{mul}(u,U,p)=\left\{
\begin{array}{ll}
1 & \ \text{if $p=z_j$}\\
0 & \ \text{if $p=z_i$ for $i\neq j$}
\end{array}\right.
\end{equation}
and since
$\eta=u\res \partial U$,  by applying Lemma \ref{lem_2.33} we conclude that  $\eta\in \jump{\sigma_j}$  or $\eta\in \jump{\sigma_j^{-1}}$.  Then it holds that either 
\[
\jump{\gamma_1}=\jump{\gamma\beta\sigma_j\beta^{-1}} \ \ \ \text{or} \ \ \ \ \jump{\gamma_1}=\jump{\gamma\beta\sigma_j^{-1}\beta^{-1}}.
\]
In particular $\gamma_1$ is a $\mathbf{e}_j$-injection of $\gamma$.  \\

By restarting the process on $\gamma_1$ we move to the next point $y\in \left(\bigcup_{i=1}^n u^{-1}(z_i) \right) \setminus \{x\} $.  Say $y\in u^{-1}(z_k)$.  By repeating this argument we produce a $\gamma_2$ which is a $\mathbf{e}_k$-injection of $\gamma_1$.  By Remark \ref{rmk:sumofinj} then $\gamma_2$ is a $\mathbf{e}_k+\mathbf{e}_j$ injection of $\gamma$.  We keep repeating this argument until we reach $\partial \tilde{D}$.  Since each new curve produced by trepassing a counterimage gives an injection,  and since $u(\partial \tilde{D})\equiv 1$ it follows that the null curve is exactly a $(k_1,\ldots,k_n)$-injection of $\gamma$.  This yields immediately that $(k_1,\ldots,k_n)\in \mathrm{Ad}(\gamma)$.\\
\smallskip
The extremant is now easily constructed by selecting for each $i=1,\ldots,n$
\[
k_i^*:=\mathrm{arg}\min\{\#(u^{-1}(z_i)) \ | \ z_i\in U_i\}.
\]
\end{proof}

Here it is worth observing that the same proof applies and leads to the following Corollary of Proposition \ref{prop:lowerbound}, which is valid for any Lipschitz curve (not necessarily satisfying hypothesis (P)):

\begin{cor}\label{cor:lowerbound}
Let $\gamma:\Su\rightarrow \R^2$ be a Lipschitz curve and let $U_i$, $i=1,\dots,m$ be some (not necessarily all) of the bounded connected components of $\R^2\setminus \Gamma$. Then 
\[
\mathcal{A}(\gamma)\geq \min\left\{\left.\sum_{i=1}^m k_i |U_i|  \ \ \right| \ \ (k_1,\ldots,k_m)\in \mathrm{Ad}(\gamma_{(m)}) \right\},
\]
where $\gamma_{(m)}\in F(m)$ is a representative of $\gamma$ in the homotopy group $\pi_1(\R^2\setminus \{P_1,\dots,P_m\})$, $P_i\in U_i$, $i=1,\dots,m$.
\end{cor}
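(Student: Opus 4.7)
The plan is to adapt the proof of Proposition \ref{prop:lowerbound} essentially verbatim, observing that nothing in the argument of Lemma \ref{lem:Crucial} required the points $\{P_1,\dots,P_n\}$ to exhaust all bounded components of $\R^2\setminus\Gamma$: the argument is purely local in the punctured plane $\R^2\setminus\{P_1,\dots,P_n\}$, and the only features of this space that play a role are $(i)$ its fundamental group $F(n)$ and $(ii)$ the fact that each $P_i$ lies inside a bounded component $U_i$ of $\R^2\setminus \Gamma$, so that $\gamma$ misses the chosen points. All of these are equally available when only $m$ components are selected.

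Concretely, fix a Lipschitz map $u:D\to\R^2$ with $u|_\Su=\gamma$. For each $i=1,\dots,m$, by Remark \ref{rem_deppoints} we may assume $P_i\in U_i\cap E_u$ (replacing the original $P_i$ by a regular value of $u$ in $U_i$ without altering the class of $\gamma_{(m)}$ in $\pi_1(\R^2\setminus\{P_1,\dots,P_m\})$). Set $k_i:=\#(u^{-1}(P_i))$. The next step is to reproduce Lemma \ref{lem:Crucial} verbatim with $n$ replaced by $m$: choose a small disk $\tilde D\subset D$ disjoint from $\bigcup_{i=1}^m u^{-1}(P_i)$ and design a homotopy $\Phi:[0,1]\times\Su\to D$ between $\Su$ and $\partial \tilde D$ crossing the preimages one at a time. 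At each crossing of a point $x\in u^{-1}(P_j)$, Corollary \ref{cor2.35} and Lemma \ref{lem_2.33}—now interpreted in the free group $F(m)$—produce an $\mathbf{e}_j$-injection of the current representative, and Remark \ref{rmk:sumofinj} accumulates these contributions across all crossings. Since the terminal curve $u|_{\partial \tilde D}$ is null-homotopic in $\R^2\setminus\{P_1,\dots,P_m\}$ (the disk $\tilde D$ being simply connected with image disjoint from all chosen $P_i$'s), we conclude that $(k_1,\dots,k_m)\in \mathrm{Ad}(\gamma_{(m)})$.

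Finally, selecting for each $i$ an extremant $z_i\in U_i\cap E_u$ that minimizes $\#(u^{-1}(z_i))$ and relabelling $P_i:=z_i$, we obtain $(k_1^*,\dots,k_m^*)\in \mathrm{Ad}(\gamma_{(m)})$ with $\mul(u,D,z)\geq k_i^*$ for a.e.\ $z\in U_i$. The area formula \eqref{eqn:area} then gives
\begin{align*}
A(u)=\int_{\R^2}\mul(u,D,z)\,\d z\geq \sum_{i=1}^m\int_{U_i}\mul(u,D,z)\,\d z\geq\sum_{i=1}^m k_i^*|U_i|\geq \min\left\{\sum_{i=1}^m k_i|U_i| : (k_1,\dots,k_m)\in\mathrm{Ad}(\gamma_{(m)})\right\},
\end{align*}
and passing to the infimum over admissible $u$ yields the conclusion. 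The main potential obstacle is making sure that when $\R^2\setminus\Gamma$ has infinitely many bounded components the crossing-by-crossing homotopy $\Phi$ can still be chosen to avoid preimages of all $m$ selected points and cross them one at a time; but this is an entirely local construction on $D$, identical to that in Lemma \ref{lem:Crucial}, and the extra untracked components of $\R^2\setminus \Gamma$ play no role in the argument.
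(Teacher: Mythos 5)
Your proof is correct and matches the paper's approach: the paper gives no separate proof of Corollary \ref{cor:lowerbound}, only the remark preceding the statement that ``the same proof applies,'' and your argument makes that remark precise by checking that the homotopy construction in Lemma \ref{lem:Crucial} is entirely local in $\R^2\setminus\{P_1,\dots,P_m\}$ and never uses that the chosen points exhaust the bounded components of $\R^2\setminus\Gamma$.
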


\subsection{Upper bound}\label{sbsct:UB}
We are left with proving that $\leq $ holds in \eqref{eq_main}.  To do that we need to choose a precise sets of generator of $\pi_1(\R^2\setminus \{P_1,\ldots,P_n\})$: the curves supported on the boundaries of the connected component of $\R^2\setminus\Gamma$.  Thus Remark \ref{rmk:NoDepOnGen} becomes relevant in what follows.\\

Let $\gamma:\Su\rightarrow \R^2$ be a curve as in Theorem \ref{teo_main}; for all bounded connected components $U_i\subset\R^2\setminus \Gamma$, $i=1,\dots,n$, we observe that $U_i$ is simply connected and $\mathcal H^1(\partial U_i)<+\infty$, as $\partial U_i\subseteq\Gamma$. We choose a Lipschitz curve $ \tau_i$ parametrizing in counterclockwise order $\partial U_i$, and compose it with a $1$-null curve $\omega_i$ based at $P$ and reaching $\partial U_i$. The obtained curve $\overline \sigma_i:=\overline \tau_i\star\omega_i$ is based at $P$ and satisfies
$$\text{\rm link}(\overline\sigma_i ,P_j)=\text{\rm link}( \tau_i,P_j)=\delta_{ij}\qquad \qquad \forall j=1,\dots,n.$$
Up to change the $1$-null curve connecting $\partial U_i$ to $P$ we can always assume that $\overline \sigma_i $ is homotetic to $\sigma_i$ in Definition \ref{def_sigma} and in particular $\overline \sigma_i\equiv\sigma_i$ in $\pi_1(\R^2\setminus\{P_1,\dots,P_n\})$.  It is convenient to introduce the new basis
\begin{align}\label{def_Sigma2}
	\overline\Sigma(n):=\{\overline \sigma_1,\dots,\overline \sigma_n,\overline \sigma_1^{-1},\dots,\overline \sigma_n^{-1}\}.
\end{align}
Let now assume that $\Gamma\subset\R^2$ is a Jordan curve and let $S$ be the unique bounded connected component of $\R^2\setminus \Gamma$. Then the existence of a classical solution to the Plateau problem tells us that there is a map $u:D\rightarrow \R^2$ which belongs to $H^1(\text{{\rm int}}(D);\R^2)\cap C^0(D;\R^2)$,  which is harmonic,  conformal in $ \text{{\rm int}}(D)$ and minimizes the area functional $A$.  Furthermore,  the classical theory of currents implies that the integral current $u_\sharp\jump{D}$ coincides with the integration over $S$. In particular one infers $A(u)\geq |u_\sharp\jump{D}|=|S|$. On the other hand, if $\overline u:D\rightarrow \overline S$ is the Riemann map, namely a biholomorphic bijection  between $D$ and $\overline S$, then $A(\overline u)=|S|$. Hence, $\overline u\in H^1(\text{{\rm int}}(D);\R^2)\cap C^0(D;\R^2)$ is a solution to the Plateau problem. This argument leads us to the following:

\begin{prop}\label{Riemann_plateau}
	Let $\gamma \in  C^{0,1} (\Su;\R^2)$ be an injective closed curve in $\R^2$, and let $S$ denote the unique bounded connected component of $\R^2\setminus \Gamma$. Then
\[
\mathcal A(\gamma)=|S|.
\]
\end{prop}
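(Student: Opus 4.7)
The plan is to prove the two inequalities $\mathcal{A}(\gamma)\geq |S|$ and $\mathcal{A}(\gamma)\leq |S|$ separately.

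For the lower bound, I would invoke Corollary \ref{cor:lowerbound} with $m=1$. Since $\gamma$ is injective, $\R^2\setminus\Gamma$ has exactly one bounded connected component $S$, and the classical winding-number computation for Jordan curves yields $\mathrm{link}(\gamma,P_1)=\pm 1$ for any $P_1\in S$; equivalently, the representative $\gamma_{(1)}$ in $\pi_1(\R^2\setminus\{P_1\})\cong\mathbb{Z}$ equals $\sigma_1^{\pm 1}$. The null word is obtained from $\sigma_1^{\pm 1}$ by a single insertion of its inverse, so $(1)\in \mathrm{Ad}(\gamma_{(1)})$, while $(0)\notin \mathrm{Ad}(\gamma_{(1)})$ since a $(0)$-injection would require $\gamma_{(1)}$ itself to be the null word, contradicting $\mathrm{link}(\gamma,P_1)\neq 0$. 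Corollary \ref{cor:lowerbound} then gives $\mathcal{A}(\gamma)\geq |S|$. Alternatively, a direct argument proceeds by observing that for any admissible Lipschitz $u$, Lemma \ref{lemma_linkdeg_gen} forces $|\mathrm{deg}(u,D,z)|=1$ on $S$, hence $\mul(u,D,z)\geq 1$ on $S$, and the area formula \eqref{eqn:area} gives $A(u)\geq |S|$.

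For the upper bound, the plan is an approximation argument. I would approximate $\gamma$ by polygonal Jordan curves $\gamma_n$ with $d_F(\gamma_n,\gamma)\to 0$, uniformly bounded lengths $\ell(\gamma_n)\leq \ell(\gamma)$, and enclosed areas $|S_n|\to |S|$. Such curves can be built by inscribing polygons through sufficiently dense samples of $\gamma$, choosing the sampling to preserve simplicity and using Hausdorff continuity of the enclosed area. Each $\overline{S_n}$ can be triangulated into finitely many triangles $T_1,\dots,T_{k_n}$ with vertices in $\gamma_n(\Su)$; subdividing $D$ into matching sectors $D_1,\dots,D_{k_n}$ through the boundary vertices and mapping each sector piecewise-affinely onto the corresponding triangle produces a Lipschitz $u_n:D\to\overline{S_n}$ with $u_n|_{\Su}=\gamma_n$ (up to reparametrization, which is harmless by Corollary \ref{cor:area_reparametrization}) and
\[
A(u_n)=\sum_{j=1}^{k_n}|T_j|=|S_n|.
\]
Consequently $\mathcal{A}(\gamma_n)\leq |S_n|$, and passing to the limit via Proposition \ref{prop:continuity} yields $\mathcal{A}(\gamma)=\lim_n \mathcal{A}(\gamma_n)\leq \lim_n|S_n|=|S|$.

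The principal technical difficulty is the constructive step in the upper bound: producing a piecewise-affine admissible map of exact area $|S_n|$ for a general simple polygon. For a star-shaped polygon this is immediate via a fan triangulation from an interior vertex, but general simple polygons require a less trivial triangulation (for example via ear-clipping), and one must orient each affine piece so that $\det\nabla u_n$ does not change sign within a triangle, securing $\int_{D_j}|\det\nabla u_n|\,dx=|T_j|$. A cleaner alternative is to approximate $\gamma$ by $C^{1,\alpha}$ Jordan curves, where Kellogg--Warschawski regularity provides a $C^{1,\alpha}$ (hence Lipschitz) extension of the Riemann map $\overline{u}_n:\overline D\to \overline{S_n}$ satisfying $A(\overline u_n)=\int_D |\overline u_n'|^2\,dx=|S_n|$ by conformality; both routes are joined by Proposition \ref{prop:continuity} to the same conclusion.
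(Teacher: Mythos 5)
Your lower bound is correct and in fact more self-contained than the paper's: you invoke only the internal degree machinery (Lemma \ref{lemma_linkdeg_gen} plus the area formula, or equivalently Corollary \ref{cor:lowerbound} with $m=1$), whereas the paper appeals to the classical existence of a Plateau minimizer $u\in H^1\cap C^0$ and the currents-theoretic fact that $u_\sharp\jump{D}$ is integration over $S$ in order to obtain $\mathcal{A}(\gamma)\geq A(u)=|S|$.

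For the upper bound you take a genuinely different route. The paper keeps the same Plateau minimizer $u$ (harmonic, hence smooth in the interior, and continuous on $\overline D$), forms the Lipschitz rescalings $u_n(x):=u\left(\tfrac{n-1}{n}x\right)$, and concludes from $A(u_n)\to|S|$, $u_n\res\Su\to\gamma$ uniformly, and Proposition \ref{prop:continuity}; this sidesteps boundary regularity and polygonal approximation entirely, at the cost of invoking Plateau/Riemann-map theory. Your route replaces that machinery with explicit piecewise-affine competitors, which is appealingly elementary, but it has a genuine gap: the claim that inscribed polygons through dense samples of a Lipschitz Jordan curve can be ``chosen to preserve simplicity'' is asserted, not proved. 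Mesh-fineness alone does not prevent non-adjacent chords of a wiggly Jordan curve from crossing; one needs either a quantitative argument through the modulus of continuity of $\gamma^{-1}$ or a citation to a classical Jordan--Schoenflies-type polygonal approximation result. The alternative $C^{1,\alpha}$ route inherits the same gap, since mollification of an injective curve need not be injective, so producing $C^{1,\alpha}$ Jordan approximants with controlled length and Frechet distance again requires a separate argument. By contrast, the triangulation-matching step you flag as the ``principal technical difficulty'' is actually unproblematic: apply the same combinatorial set of diagonals to a convex polygon inscribed in $D$ (any maximal set of pairwise non-crossing diagonals of a simple polygon remains non-crossing when drawn on a convex polygon with the same cyclic vertex order), and since the resulting piecewise-affine map is a bijection onto $\overline{S_n}$ the identity $A(u_n)=\sum_j|T_j|=|S_n|$ holds automatically, irrespective of orientations.
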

\begin{proof}
	Let $u\in H^1(\text{{\rm int}}(D);\R^2)\cap C^0(D;\R^2)$ be a solution to the Plateau problem for the boundary $\Gamma=\gamma(\Su)$, so that, from the previous discussion, we have $A(u)=|S|$. Furthermore,  since $u$ is the minimizer on a larger class than just Lipscthitz maps,  trivially we have $\mathcal A(\gamma)\geq A(u)$.  Let us prove the opposite inequality,  and  consider the maps $u_n:D\rightarrow \R^2$ defined as 
\[
u_n(x):=u\left( \frac{n-1 }{n}x \right)\qquad \qquad \forall  x\in D,
\]
	for all $n\geq2$. Also, set $\gamma_n:\Su\rightarrow \R^2$ as $\gamma_n=u_n\res \Su$. Therefore, by  change of variable,  
	\[A(u_n)=\int_D|\det(\nabla u_n)|dx=\int_{B_{1-\frac1n}(0)}|\det(\nabla u)|dx\rightarrow A(u)=|S|,
	\]
	as $n\rightarrow\infty$. On the other hand, $A(u_n)\geq \mathcal A(\gamma_n)$, and since $\gamma_n\rightarrow \gamma$ uniformly, thus in the Frechet metric, we conclude
	$$\mathcal A(\gamma)=\lim_{n\rightarrow \infty}\mathcal A(\gamma_n)\leq \lim_{n\rightarrow \infty}A(u_n)=|S|,$$
	by Proposition \ref{prop:continuity}.
\end{proof}
We are now in a position to prove the upper bound:
\begin{prop}\label{prop:upperbound}
	For all $\gamma\in F(n)$ it holds
	\[
	\mathcal{A}(\gamma)\leq \min\left\{\left.\sum_{i=1}^n k_i |U_i|  \ \ \right| \ \ (k_1,\ldots,k_n)\in \mathrm{Ad}(\gamma) \right\}
	\]
\end{prop}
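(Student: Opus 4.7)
The plan is to reduce the computation of $\mathcal{A}(\gamma)$ to that of a concatenation of conjugates of the basis loops $\overline{\sigma}_i^{\pm 1}$, and use the concrete identity $\mathcal{A}(\overline{\sigma}_i)=|U_i|$ together with subadditivity and invariance under null/conjugation modifications. The main tools will be Lemma \ref{lem_subadd} (subadditivity), Lemma \ref{teo_Ageneric} (conjugation invariance), Proposition \ref{prop:nullcurve} (null-addition invariance), and Proposition \ref{Riemann_plateau} (Plateau problem for Jordan curves).

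First I would establish the area of the distinguished basis loops: since $\overline{\sigma}_i=\overline{\tau}_i\star\omega_i$ with $\omega_i$ being $1$-null and $\overline{\tau}_i$ a Jordan parametrization of $\partial U_i$ (the boundary of the bounded simply connected open set $U_i$), Proposition \ref{prop:nullcurve} followed by Proposition \ref{Riemann_plateau} gives
\[
\mathcal{A}(\overline{\sigma}_i)=\mathcal{A}(\overline{\tau}_i)=|U_i|,
\]
and the analogous equality for $\overline{\sigma}_i^{-1}$. Next, fix $(k_1,\ldots,k_n)\in\mathrm{Ad}(\gamma)$. By Remark \ref{rem_attheend} and Remark \ref{rmk:NoDepOnGen}, working in the basis $\overline{\Sigma}(n)$, I obtain words $\beta_1,\ldots,\beta_N\in F(n)$, signs $\epsilon_l\in\{\pm 1\}$, and indices $i_l$ with $N=\sum_i k_i$ and each $i$ appearing exactly $k_i$ times, such that in $\pi_1(\R^2\setminus\{P_1,\ldots,P_n\})$
\[
\gamma\star M_1\star\cdots\star M_N\equiv 1,\qquad M_l:=\beta_l\star\overline{\sigma}_{i_l}^{\epsilon_l}\star\beta_l^{-1}.
\]
Since $M_l^{-1}$ is itself a monoid (a conjugate of $\overline{\sigma}_{i_l}^{\mp 1}$), Lemma \ref{teo_Ageneric} gives $\mathcal{A}(M_l^{-1})=\mathcal{A}(\overline{\sigma}_{i_l}^{\mp 1})=|U_{i_l}|$.

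The central step is to promote the algebraic identity $\gamma\star M_1\star\cdots\star M_N\equiv 1$ in $\pi_1$ to a geometric \textit{null} curve in the sense of Definition \ref{def:null_curves}. I would argue this inductively on the length of the free-group reduction: each elementary cancellation $\overline{\sigma}_i^{\pm 1}\,\overline{\sigma}_i^{\mp 1}\to 1$ in the reduced word corresponds, at the level of the Lipschitz curve, to a $1$-null back-and-forth trace of the conjugated loop $\beta\star\overline{\sigma}_i\star\beta^{-1}$. By freely choosing the representatives $\beta_l$, each such pair can be realized as a retracing, and Proposition \ref{prop:nullcurve} ensures that adjoining or removing it does not alter $\mathcal{A}$. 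Iterating until the word is fully reduced produces a finitely null (hence null) curve, and Lemma \ref{lem_null0} then gives $\mathcal{A}(\gamma\star M_1\star\cdots\star M_N)=0$.

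Finally, combining all ingredients via Lemma \ref{lem_subadd} and Proposition \ref{prop:nullcurve} applied iteratively to the $1$-null pairs $M_l\star M_l^{-1}$:
\[
\mathcal{A}(\gamma)=\mathcal{A}\bigl((\gamma\star M_1\star\cdots\star M_N)\star M_N^{-1}\star\cdots\star M_1^{-1}\bigr)\leq 0+\sum_{l=1}^N \mathcal{A}(M_l^{-1})=\sum_{i=1}^n k_i|U_i|,
\]
and minimizing over $(k_1,\ldots,k_n)\in\mathrm{Ad}(\gamma)$ yields the claimed upper bound. The hardest part is precisely the null-reduction step in the middle paragraph: while $\gamma\star M_1\star\cdots\star M_N$ is algebraically trivial in $\pi_1$, null-homotopy does not automatically imply geometric nullity in the sense of Definition \ref{def:null_curves}. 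The freedom to choose the conjugating words $\beta_l$ must be exploited carefully so that every free-group cancellation genuinely corresponds to a Lipschitz $1$-null retracing attached to the curve, which is the key technical ingredient allowing Proposition \ref{prop:nullcurve} to be iterated without losing area.
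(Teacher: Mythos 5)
Your proposal is correct and takes essentially the same route as the paper: both convert the injection datum $(k_1,\ldots,k_n)\in\mathrm{Ad}(\gamma)$ into the free-group identity $\gamma\cdot\prod_l M_l=1$ with $M_l$ conjugates of the generators $\overline\sigma_{i_l}^{\pm1}\in\overline\Sigma(n)$, then apply Lemma \ref{lem_subadd}, Lemma \ref{teo_Ageneric}, and Proposition \ref{Riemann_plateau} to bound $\mathcal A(\gamma)$ by $\sum_l\mathcal A(M_l^{-1})=\sum_i k_i|U_i|$. The only difference is expository: you explicitly isolate and justify the claim $\mathcal A(\gamma\star M_1\star\cdots\star M_N)=0$ by the $1$-null retracing argument, whereas the paper passes directly from the $F(n)$-identity $\gamma=(\beta_K\alpha_K\beta_K^{-1})^{-1}\cdots(\beta_1\alpha_1\beta_1^{-1})^{-1}$ to the subadditivity estimate, leaving implicit that free reduction corresponds to deletion of $1$-null pieces and hence preserves $\mathcal A$; your version makes that tacit use of Proposition \ref{prop:nullcurve} and Lemma \ref{lem_null0} visible.
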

\begin{proof}
%
Let $(k_1,\ldots,k_n)\in \mathrm{Ad}(\gamma)$ so  there is a $(k_1,\ldots,k_n)$-injection  of $\gamma$ which is equivalent to the null word.  In particular, by definition of injection and by Remark \ref{rem_attheend}, we can find a sequence of monoids $\beta_j\alpha_j\beta_j^{-1}$, with $j=1,\dots,K$, $K:=k_1+\ldots+k_n$, and $\alpha_j\in \overline\Sigma(n)$, such that 
$$1=\gamma(\beta_1\alpha_1\beta_1^{-1})(\beta_2\alpha_2\beta_2^{-1})\dots (\beta_K\alpha_K\beta_K^{-1})\qquad \text{ in }F(n).$$
This implies 
$$\gamma=(\beta_K\alpha_K\beta_K^{-1})^{-1}\dots (\beta_2\alpha_2\beta_2^{-1})^{-1}(\beta_1\alpha_1\beta_1^{-1})^{-1}.$$
Hence from Lemma \ref{lem_subadd} it follows
$$\mathcal A(\gamma)\leq \sum_{j=1}^K\mathcal A((\beta_j\alpha_j\beta_j^{-1})^{-1})=\sum_{j=1}^K\mathcal A(\alpha_j^{-1})=\sum_{i=1}^n k_i|U_i|.$$
In the first equality we have used Lemma \ref{teo_Ageneric}, and in the latter,  Proposition \ref{Riemann_plateau}.  The thesis follows by the arbitrariness of $(k_1,\ldots,k_n)\in \mathrm{Ad}(\gamma)$.
\end{proof}

\section{The case of infinitely many connected components of $\R^2\setminus \Gamma$}\label{sct:Cnn}
In this section we generalize Theorem \ref{teo_main} to any Lipschitz curve $\gamma:\Su\rightarrow \R^2$.
\medskip

For such a general Lipschitz curve, setting as usual $\Gamma:=\gamma(\Su)$, we know that $\R^2\setminus \Gamma$ consists of at most countable many open connected components. Let us call $(U_i)_{i\geq 0}$ these connected components, which are bounded (and simply connected) unless $i=0$, corresponding to $U_0$, the unique unbounded connected component. For all $i\geq1$ we pick a point $P_i\in U_i$. \\

For all $N>0$ we consider the homotopy group $\pi_1(\R^2\setminus \{P_1,\dots,P_N\})=F(N)$; the curve $\gamma$ has an indecomposable representation in $F(N)$, for all $N>0$, that we denote by $\gamma_{(N)}$. 
We denote by 
\begin{align}
	\mathcal A^N(\gamma):=\min\left\{\left.\sum_{i=1}^N k_i |U_i|  \ \ \right| \ \ (k_1,\ldots,k_N)\in \mathrm{Ad}\left(\gamma_{(N)}\right) \right\}.
\end{align}
The following theorem then holds:

\begin{theorem}\label{teo_main2}
	Let $\gamma:\Su \rightarrow \R^2$ be a Lipschitz curve, let $\tau:\mathbb N\rightarrow \mathbb N$ be a bijection, and let $\gamma_{(N)}^\tau\in F(N)$ represent $\gamma$ in $\pi_1(\R^2\setminus \{P_{\tau(1)},\dots,P_{\tau(N)}\})$. Then 
\begin{equation}
\lim_{N\rightarrow \infty}\mathcal A^N(\gamma)=\lim_{N\rightarrow \infty}\left(\min\left\{\left.\sum_{i=1}^N k_{\tau(i)} |U_{\tau(i)}|  \ \ \right| \ \ \begin{array}{c}
(k_1,\ldots,k_N)\in \mathrm{Ad}\left(\gamma_{(N)}^\tau\right)
\end{array} \right\}\right),
\end{equation}
and moreover
\begin{align}\label{eq_main2}
	\mathcal{A}(\gamma)=\lim_{N\rightarrow \infty}\mathcal A^N(\gamma).
\end{align}
\end{theorem}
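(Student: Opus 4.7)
The plan proceeds in three stages, two of which are straightforward and one which is the technical heart of the argument.

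First, I would show that $(\mathcal A^N(\gamma))_N$ is non-decreasing by exploiting the natural quotient homomorphism $q:F(N+1)\to F(N)$ sending $\sigma_{N+1}\mapsto 1$ and fixing the remaining generators. Given $(k_1,\dots,k_{N+1})\in\mathrm{Ad}(\gamma_{(N+1)})$, applying $q$ to a witnessing $(k_1,\dots,k_{N+1})$-injection of $\gamma_{(N+1)}$ into the null word yields a $(k_1,\dots,k_N)$-injection of $q(\gamma_{(N+1)})=\gamma_{(N)}$ into the null word of $F(N)$, since every $(N+1)$-monoid vanishes under $q$. Hence $(k_1,\dots,k_N)\in\mathrm{Ad}(\gamma_{(N)})$, and $\sum_{i\leq N+1}k_i|U_i|\geq\sum_{i\leq N}k_i|U_i|$ yields $\mathcal A^N(\gamma)\leq\mathcal A^{N+1}(\gamma)$. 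Combined with Corollary \ref{cor:lowerbound}, giving $\mathcal A^N(\gamma)\leq\mathcal A(\gamma)$ for every $N$, the limit $L := \lim_N \mathcal A^N(\gamma)$ exists and $L \leq \mathcal A(\gamma)$.

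The main work is the reverse inequality $\mathcal A(\gamma) \leq L$, which I would establish via Fréchet-continuous approximation by curves satisfying property (P). Fix $\varepsilon > 0$; since $\sum_{i\geq 1}|U_i| < \infty$, choose $N=N(\varepsilon)$ with $\sum_{i>N}|U_i| < \varepsilon$ and $L - \mathcal A^N(\gamma) < \varepsilon$. Construct a Lipschitz curve $\gamma_\varepsilon$ with property (P) such that (a) $d_F(\gamma_\varepsilon,\gamma)<\varepsilon$; (b) for each $j\leq N$, $P_j$ lies in a bounded component $V_j^\varepsilon$ of $\R^2\setminus\gamma_\varepsilon(\Su)$ with $|V_j^\varepsilon|\to|U_j|$; (c) the remaining bounded components $V_{N+1}^\varepsilon,\dots,V_{M_\varepsilon}^\varepsilon$ have total area at most $2\varepsilon$; (d) each of these extra components is encircled by $\gamma_\varepsilon$ at most $C$ times (equivalently, the number of occurrences of $\sigma_j^{\pm 1}$ in $\gamma_\varepsilon^{(M_\varepsilon)}$ is bounded by $C$ for every $j > N$); (e) $\gamma_\varepsilon$ is homotopic to $\gamma$ in $\R^2\setminus\{P_1,\dots,P_N\}$, so $\gamma_\varepsilon^{(N)} = \gamma_{(N)}$ in $F(N)$. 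This is obtained by a piecewise-linear approximation at a small enough scale, followed by local surgeries near $\partial U_i$ for $i>N$ merging each small $U_i$ with an adjacent larger component, performed away from the fixed $P_j$'s. Picking $(k_1^\flat,\dots,k_N^\flat)\in\mathrm{Ad}(\gamma_{(N)})$ attaining $\mathcal A^N(\gamma)$, by (e) it lies also in $\mathrm{Ad}(\gamma_\varepsilon^{(N)})$; performing the corresponding monoid injections on $\gamma_\varepsilon^{(M_\varepsilon)}\in F(M_\varepsilon)$ produces a word $w_\varepsilon\in\ker(q)$ whose count of $\sigma_j^{\pm 1}$ for $j>N$ is bounded by $C$ per $j$, so $w_\varepsilon$ is killed by at most $C$ further $j$-monoid injections for each $j>N$. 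Theorem \ref{teo_main} then gives
\[
\mathcal A(\gamma_\varepsilon) \leq \sum_{j=1}^N k_j^\flat|V_j^\varepsilon| + C\sum_{j>N}|V_j^\varepsilon| \leq \mathcal A^N(\gamma) + o_\varepsilon(1) + 2C\varepsilon \leq L + (1+2C)\varepsilon + o_\varepsilon(1),
\]
with $o_\varepsilon(1)\to 0$ by (b). Combining (a) with Proposition \ref{prop:continuity} gives $\mathcal A(\gamma_\varepsilon)\to\mathcal A(\gamma)$, whence $\mathcal A(\gamma)\leq L$ upon letting $\varepsilon\to 0$.

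The reordering invariance follows immediately since the above argument applied to any bijection $\tau$ yields $\lim_N \mathcal A^N_\tau(\gamma) = \mathcal A(\gamma)$, a quantity intrinsic to $\gamma$ and independent of $\tau$. The chief technical obstacle is the explicit construction of $\gamma_\varepsilon$ satisfying (a)--(e) simultaneously: one needs a geometric surgery eliminating each small $U_i$ without spawning high-winding or large-area new components and without altering the homotopy class in $\R^2\setminus\{P_1,\dots,P_N\}$. This combined geometric and combinatorial control is where the real work lies.
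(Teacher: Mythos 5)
Your overall strategy---monotonicity of $\mathcal A^N$ via the quotient map $F(N+1)\to F(N)$, the lower bound from Corollary \ref{cor:lowerbound}, and then an approximation of $\gamma$ by a curve satisfying (P)---matches the skeleton of the paper's proof (Step~1 is essentially identical, and the paper's Step~2 also begins with Lemma \ref{lem_approxcurves}). The reordering-invariance observation is fine. However, the load-bearing step, namely the construction of $\gamma_\varepsilon$ satisfying conditions (a)--(e), is where your argument breaks down and where it diverges from the paper's actual mechanism.

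The decisive issue is condition (d). You require that the number of occurrences of $\sigma_j^{\pm1}$ in $\gamma_\varepsilon^{(M_\varepsilon)}$ be uniformly bounded by a constant $C$ for all small components $j>N$, and your final estimate $C\sum_{j>N}|V_j^\varepsilon|\leq 2C\varepsilon$ needs $C$ to be independent of $\varepsilon$ (or at most $o(1/\varepsilon)$). This is not achievable in general: take $\gamma$ to contain a Lipschitz spiral (e.g.\ $r(t)=t^2$, $\omega(t)=1/t$); the winding numbers of $\gamma$---and of any $\varepsilon$-close curve $\gamma_\varepsilon$---around the small nested annular components blow up as the components shrink, so any $\gamma_\varepsilon$ within Fr\'echet distance $\varepsilon$ of $\gamma$ inherits occurrence counts growing like $1/\varepsilon$. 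Moreover the parenthetical equivalence you state in (d) is false: ``encircled at most $C$ times'' is the winding number, a signed sum that can be $0$ while the occurrence count is arbitrarily large; the lifting argument you sketch genuinely needs the occurrence count, not the winding number. The paper sidesteps this entire obstruction by never imposing a pointwise bound on winding or occurrence counts. Instead it builds a Lipschitz homotopy $\Psi_\varepsilon$ from $\gamma$ to $\gamma_\varepsilon$ (affine interpolation) whose \emph{integrated} multiplicity $\int_{\R^2}\mathrm{mul}(\Psi_\varepsilon,\cdot)\,dz$ is $\leq C\varepsilon$ (the swept area is small), and then chooses probing points $R_{ij}\in W_{ij}=U_i\cap V_j$ where $\mathrm{mul}(\Psi_\varepsilon,\cdot)$ is minimal so that the \emph{area-weighted} sum $\sum_{i,j}m_{ij}|W_{ij}|\leq C\varepsilon$ holds. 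This weighted control is exactly what one can prove; the unweighted bound in your (d) is not. Two dedicated lemmas then carry out the combinatorics: Lemma \ref{Rem_34} transports an admissible tuple for $\gamma$ to one for $\gamma_\varepsilon$ by adding the homotopy multiplicities $\mathrm{mul}(\Psi_\varepsilon,\cdot,P_i)$, and Proposition \ref{prop:crucial} projects an admissible tuple for a refined point set $\mathcal R$ (one point in each $W_{ij}$) down to one for the coarse components $V_j$ by taking minima over $i$.

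A secondary but genuine gap is condition (b). There is no canonical identification of the components $V_j^\varepsilon$ of $\R^2\setminus\Gamma_\varepsilon$ with those of $\R^2\setminus\Gamma$: the approximation can split a component $U_j$ into pieces, merge several $U_i$'s into one $V_j^\varepsilon$, or create new pieces straddling several $U_i$'s. The paper handles this head-on by working with the intersections $W_{ij}=U_i\cap V_j$ and never assuming any bijection between the two families of components; your proposal relies on $|V_j^\varepsilon|\to|U_j|$, which is not generally meaningful.

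Finally, your combinatorial claim that ``$w_\varepsilon$ is killed by at most $C$ further $j$-monoid injections for each $j>N$'' is in fact correct once one lifts the $i$-monoid injections ($i\leq N$) through the obvious section $F(N)\hookrightarrow F(M_\varepsilon)$ (so that no new $\sigma_j$ with $j>N$ appears) and then invokes the Reidemeister--Schreier description of $\ker q$ as freely generated by conjugates $t\sigma_j t^{-1}$, $t\in F(N)$, $j>N$. But this only converts the (unprovable) occurrence bound into a (needed) injection bound; it does not rescue the argument, because the occurrence bound itself is where the construction fails.
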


Before proving this, we anticipate the following approximation Lemma:

\begin{lemma}\label{lem_approxcurves}
	Let $\gamma:\Su\rightarrow \R^2$ be a Lipschitz curve; then for all $\varepsilon>0$ there exists a smooth curve  $\gamma_\varepsilon:\Su\rightarrow \R^2$ such that, noting $\Gamma_\varepsilon:=\gamma_\varepsilon(\Su)$, one has 
	\begin{itemize}
		\item[(a)] $\R^2\setminus \Gamma_\varepsilon$ consists of finitely many connected components;
		\item[(b)] $\|\gamma_\varepsilon-\gamma\|_{L^\infty(\Su;\R^2)}\leq \varepsilon$;
		\item[(c)] $\int_{\Su}|\dot\gamma_\varepsilon(t)|dt\leq\int_{\Su}|\dot\gamma(t)|dt+\varepsilon$. 
	\end{itemize}
\end{lemma}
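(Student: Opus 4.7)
The plan is to first approximate $\gamma$ in sup norm by a polygonal (piecewise affine) curve $\hat\gamma_N$, whose image is a finite union of line segments and hence has only finitely many complementary components, and then round the corners of $\hat\gamma_N$ to produce a $C^\infty$ curve $\gamma_\varepsilon$ while preserving the finiteness of the complement as well as the uniform and length bounds.

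Identifying $\Su$ with $[0,2\pi)$, I would pick $N$ equispaced nodes $\theta_0<\dots<\theta_N=\theta_0+2\pi$ and define $\hat\gamma_N$ to be affine on each $[\theta_{j-1},\theta_j]$ and to agree with $\gamma$ at the nodes. Lipschitz continuity of $\gamma$ yields $\|\hat\gamma_N-\gamma\|_{\infty} \leq 2\pi\,\mathrm{Lip}(\gamma)/N \leq \varepsilon/2$ for $N$ large, and on each subinterval the straight segment is the shortest path, so $\int_{\Su}|\dot{\hat\gamma}_N|\,dt \leq \int_{\Su}|\dot\gamma|\,dt$. Moreover $\hat\Gamma_N:=\hat\gamma_N(\Su)$ is a finite union of line segments, hence a finite planar graph whose complement has only finitely many connected components (by Euler's formula, say).

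For the smoothing, around each node $\theta_j$ I would choose a parameter interval $I_j=(\theta_j-\delta_j,\theta_j+\delta_j)$ and a convex open neighborhood $V_j$ of the vertex $\hat\gamma_N(\theta_j)$, with $\delta_j$ small enough that: the $I_j$ are pairwise disjoint in $\Su$; each $\hat\gamma_N(I_j)$ lies inside the corresponding $V_j$; and the $V_j$ are pairwise disjoint and disjoint from the union of segments of $\hat\Gamma_N$ not incident to $\hat\gamma_N(\theta_j)$. To guarantee the existence of such $V_j$, I would first make an arbitrarily small preliminary perturbation of the vertex values $\gamma(\theta_j)$ to put them in general position, so that no vertex lies on any non-adjacent segment of $\hat\Gamma_N$; this affects (b) and (c) only by a negligible amount. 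On each $I_j$ I would then replace the polygonal corner by a $C^\infty$ arc joining the two adjacent linear pieces with matching values and all derivatives at the endpoints of $I_j$ (a standard bump-function construction), chosen to remain in $V_j$ and to have length bounded by that of the original polygonal corner plus $\varepsilon/(2N)$. Conditions (b) and (c) then follow by the triangle inequality and the additivity of length.

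The main point to verify is (a). Outside $\bigcup_j V_j$ the image $\Gamma_\varepsilon$ coincides with $\hat\Gamma_N$, while inside each $V_j$ the polygonal portion (two half-segments meeting at the vertex) and the smoothed portion are both simple arcs in $V_j$ with the same two endpoints on $\partial V_j$, each splitting the convex set $V_j$ into two regions. Hence the complement $\R^2\setminus \Gamma_\varepsilon$ has the same (finite) number of connected components as $\R^2\setminus \hat\Gamma_N$. The main obstacle is the topological argument of this last step, which relies essentially on the genuinely disjoint neighborhoods $V_j$ described above; in particular, on the preliminary perturbation placing the vertices in general position so that no vertex of $\hat\gamma_N$ accidentally lies on a non-adjacent segment of the polygonal image.
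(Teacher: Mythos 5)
Your proposal is correct, but it follows a genuinely different route from the paper. The paper first mollifies $\gamma$ to obtain a smooth $\eta$, then applies the area formula to the two-variable difference quotient $\delta(t,s)=\bigl(\eta(t)-\eta(s)\bigr)/(s-t)$ to pick a small vector $\lambda$ with $\delta^{-1}(\lambda)$ finite; the tilted curve $\zeta(t)=\eta(t)+\lambda t$ then has only finitely many self-intersections, and after closing it up, the finiteness of complementary components is proved by an induction on the number of self-intersections (each innermost loop adds at most one region). You instead pass through a piecewise-affine interpolant $\hat\gamma_N$: its image is a finite union of segments, so the complement is a finite planar-graph complement (Euler's formula gives the count for free), and you then round the corners inside small disjoint convex neighborhoods after a general-position perturbation. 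Both arguments are sound; the trade-off is that the paper's approach avoids the topological delicacy of corner-rounding at the price of invoking the area formula and the self-intersection induction, whereas yours is more elementary on the analytic side but needs the final ``the rounded arc and the corner give homeomorphic complements'' step, which, as you rightly flag, is the crux. That step is not quite as automatic as your ``Hence'' suggests: the clean justification is that two simple arcs in a closed convex disk $\overline V_j$ with the same two boundary endpoints and interiors inside $V_j$ are ambient isotopic rel $\partial V_j$ (a Schoenflies-type statement), so composing these isotopies and extending by the identity outside $\bigcup_j V_j$ gives a homeomorphism of $\R^2$ carrying $\hat\Gamma_N$ onto $\Gamma_\varepsilon$, and hence their complements have the same (finite) number of components. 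With that made explicit, and with the routine checks that the rounded arc can be taken simple, contained in $V_j$, disjoint from the two segment stubs except at its endpoints, and of length not exceeding that of the corner (a blend $-\phi(\theta_j-t)c_1u_1+\phi(t-\theta_j)c_2u_2$ with a monotone $C^\infty$ cutoff $\phi$ does all of this), your argument is complete and gives (a), (b), (c).
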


\begin{proof}
	By mollification, for all $\varepsilon>0$ we can find a smooth curve $\eta:\Su\rightarrow \R^2$ so that $$\|\eta-\gamma\|_{L^\infty(\Su;\R^2)}\leq \varepsilon/2\qquad \text{and }\qquad \int_{\Su}|\dot\eta(t)|dt\leq\int_{\Su}|\dot\gamma(t)|dt+\varepsilon/2.$$
	For all $t,s\in [0,2\pi)$, with $t\neq s$, we define (we identify $\Su$ with $[0,2\pi)$)
	$$\delta_j(t,s)=\frac{\eta_j(t)-\eta_j(s)}{s-t},\qquad j=1,2.$$
	Since $\eta$ is smooth, it turns out that $\delta_j$ extends to the set $\{t=s\}$ and $\nabla \delta_j\in L^\infty([0,2\pi)\times[0,2\pi),\R^2)$; in particular
	$\int_{[0,2\pi)^2}|\det\nabla \delta(t,s)|dtds$ is finite and by area formula $\text{{\rm mul}}(\delta,[0,2\pi)^2,\cdot)\in L^1(\R^2)$. This implies that $\text{{\rm mul}}(\delta,[0,2\pi)^2,\lambda)$ is finite for a.e. $\lambda\in \R^2$. We can then select $\lambda\in \R^2\setminus \{0\} $ arbitrarily close to $0$ in such a way that $\delta^{-1}(\lambda)$ is finite; specifically, we can choose $|\lambda|<\varepsilon/16\pi$ so that, defining
	$\zeta(t):=\eta(t)+\lambda t,$ for  $t\in[0,2\pi),$
	it turns out 
	$$\|\zeta-\eta\|_{L^\infty(\Su;\R^2)}\leq \varepsilon/4\qquad \text{and }\qquad \int_{\Su}|\dot\zeta(t)|dt\leq\int_{\Su}|\dot\eta(t)|dt+\varepsilon/4.$$
	Let us now show that the number of self-intersections of $\zeta$ is finite. Indeed, assume that $\zeta(t)=\zeta(s)$, then by definition it follows $\eta(t)-\eta(s)=\lambda(s-t)$. Hence $\delta(t,s)=\lambda$ and $(t,s)\in \delta^{-1}(\lambda)$. Therefore the number of couples $(t,s)$ for which $\zeta(t)=\zeta(s)$ is finite. 
	
	Eventually, we design a path $\widetilde\zeta$ from $\zeta(2\pi)$ to $\zeta(0)$ in such a way that its length is smaller than $\varepsilon/4$, it intersects the image of $\zeta$ at most finitely many times, and $\gamma_\varepsilon:=\zeta\star\widetilde \zeta$ is smooth and satisfies (b) and (c). 
	
	It remains to show that $\R^2\setminus \Gamma_\varepsilon$ decomposes in finitely many connected components. We do that by induction on the number
	$$N_\varepsilon:=\#\{(t,s)\in [0,2\pi)^2:t< s,\;\gamma_\varepsilon(t)=\gamma_\varepsilon(s)\}.$$ 
	If $N_\varepsilon=0$ then $\gamma_\varepsilon$ parametrizes a Jordan curve and $\R^2\setminus \Gamma_\varepsilon$ consists of two connected components.
	Assume the thesis is true for $N_\varepsilon\leq N$, for some $N\in \mathbb N$, and consider the case $N_\varepsilon=N+1$. Take a couple $(t,s)$ such that $s-t$ is minimal among all such couples, and so the curve $\gamma_\varepsilon\res [t,s)$ is injective. The curve $\gamma_\varepsilon\res [0,t)\star\gamma_\varepsilon\res[s,2\pi)$ is a closed (Lipschitz) loop whose corresponding number $N_\varepsilon$ is smaller than $N+1$, so its image $\widehat \Gamma_\varepsilon$ divides $\R^2$ in finitely many connected components. Now, by hypothesis on $\gamma_\varepsilon$, the set
	$$\{r\in [t,s):\gamma_\varepsilon(r)\in \widehat \Gamma_\varepsilon\}=\{r_1,r_2,\dots,r_{M_\varepsilon}\}$$
	is finite. So, assuming $r_i<r_{i+1}$, $\forall i=1,\dots,M_\varepsilon-1$ in the preceding expression, 
	every curve $\gamma_\varepsilon\res [r_i,r_{i+1})$ is contained in a given connected component of $\R^2\setminus \widehat \Gamma_\varepsilon$, and divides it into two components. Since this is true for every $i=1,\dots, M_\varepsilon-1$, the number of connected components of $\R^2\setminus \Gamma_\varepsilon$ remains bounded, and item (a) is achieved.
\end{proof}
We also need the following 

\begin{prop}\label{prop:crucial}
	Let $\eta:\mathbb S^1\rightarrow \R^2$ a Lipschitz curve satisfying Property (P); let $V_1,\dots,V_N$ be the bounded connected components of $\R^2\setminus \eta(\Su)$, and let $\mathcal R$ be a finite family of points in $\R^2\setminus \eta(\Su)$ such that for all $j=1,\dots,N$
	$$\{X_{ij}\in \mathcal R\}=\mathcal R\cap V_j\neq \varnothing.$$
	Let $\eta^{\mathcal R}$ be a word representing $\eta$ in $\pi_1(\R^2\setminus \mathcal R)$, and assume that $(\dots,\widehat k_{ij},\dots)\in \mathbb N^{\#(\mathcal R)}$
is a string in $\text{{\rm Ad}}(\eta^{\mathcal R})$. Then $(k'_1,\dots,k'_N)\in \text{{\rm Ad}}(\eta)$, where  
$k'_j:=\min\{\widehat k_{ij}:X_{ij}\in \mathcal R\cap V_j\}.$		
%
%
\end{prop}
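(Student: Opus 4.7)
My plan is to construct a specific Lipschitz extension $u:D\to\R^2$ with $u\res\Su=\eta$ and such that $\#u^{-1}(z_j^*)=k'_j$ for a particular point $z_j^*\in V_j\cap E_u$ in each bounded component. Since $\eta$ satisfies (P), Lemma \ref{lem:Crucial} then yields immediately $(k'_1,\dots,k'_N)\in\mathrm{Ad}(u)\subset\mathrm{Ad}(\eta)$, which is the desired conclusion. The key design choice is to select $z_j^*$ at the minimum-achieving point so that only the monoids indexed by $i^*_j$ contribute to its multiplicity.

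From the hypothesis $(\widehat k_{ij})\in\mathrm{Ad}(\eta^{\mathcal R})$, Remark \ref{rem_attheend} provides a factorization
\[
\eta^{\mathcal R}=(\beta_K\alpha_K\beta_K^{-1})^{-1}\cdots(\beta_1\alpha_1\beta_1^{-1})^{-1}\qquad\text{in }F(\#\mathcal R),
\]
where each $\alpha_m=\sigma_{X_{i(m)j(m)}}^{\pm 1}$, and the letter $\sigma_{X_{ij}}^{\pm 1}$ occurs among the $\alpha_m$'s exactly $\widehat k_{ij}$ times. For each $j$ I set $i^*_j:=\arg\min_i\widehat k_{ij}$ and $z_j^*:=X_{i^*_jj}\in\mathcal R\cap V_j$, so that $\widehat k_{i^*_jj}=k'_j$ by definition.

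I now mimic the construction underlying Proposition \ref{prop:upperbound}. The curve $\eta$ and the concatenation loop $\widetilde\eta:=(\beta_K\alpha_K\beta_K^{-1})^{-1}\star\cdots\star(\beta_1\alpha_1\beta_1^{-1})^{-1}$ represent the same element of $\pi_1(\R^2\setminus\mathcal R)$, hence are freely Lipschitz-homotopic inside $\R^2\setminus\mathcal R$; combining Lemma \ref{teo_Ageneric} and Proposition \ref{prop:nullcurve}, it suffices to build a Lipschitz extension of $\widetilde\eta$ having the prescribed multiplicities at the $z_j^*$'s. For each $m$, I fill the monoid $(\beta_m\alpha_m\beta_m^{-1})^{-1}$ by following $\beta_m^{-1}$, then applying the Riemann map onto a small closed disk $\overline B_m$ centered at $X_{i(m)j(m)}$ whose boundary parametrizes $\alpha_m^{\mp 1}$, and finally returning along $\beta_m$. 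I choose the radii so tiny that $z_j^*\in B_m$ precisely when $(i(m),j(m))=(i^*_j,j)$: this is possible because whenever $i(m)\neq i^*_{j(m)}$ the point $z^*_{j(m)}=X_{i^*_{j(m)}j(m)}$ is distinct from $X_{i(m)j(m)}$, and $B_m$ can be shrunk to avoid it.

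With these choices, $\#u^{-1}(z_j^*)$ is exactly the number of filled monoids whose disk covers $z_j^*$, namely $\#\{m:(i(m),j(m))=(i^*_j,j)\}=\widehat k_{i^*_jj}=k'_j$; the Lipschitz homotopy between $\eta$ and $\widetilde\eta$ contributes nothing, because it is performed inside $\R^2\setminus\mathcal R$ and $z_j^*\in\mathcal R$ lies outside its image. A small generic perturbation ensures $z_j^*\in E_u$ without affecting the counts, and Lemma \ref{lem:Crucial} finishes the proof. The main technical obstacle is coordinating the conjugating paths $\beta_m$ with the tiny filling disks $\overline B_m$ so that the glued map $u$ is genuinely Lipschitz on the whole of $D$; this amounts to the same careful parametrization bookkeeping already carried out in the proof of Proposition \ref{prop:upperbound}, with the extra constraint that the connecting homotopy be realized within $\R^2\setminus\mathcal R$, which is possible by Lemma \ref{teo_2.18}.
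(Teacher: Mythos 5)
Your proof is correct in substance, but it takes a genuinely different route from the paper's. The paper argues purely algebraically: after picking one representative $R_j\in\mathcal R\cap V_j$ for each $j$, it considers the retraction homomorphism $\pi^{\mathcal R}:F(\#\mathcal R)\to F(N)$ that erases all generators indexed by the discarded points of $\mathcal R$ (this is the map induced by the inclusion $\R^2\setminus\mathcal R\hookrightarrow\R^2\setminus\overline{\mathcal R}$). Applying $\pi^{\mathcal R}$ to the relation $\eta^{\mathcal R}\prod_m\beta_m\alpha_m\beta_m^{-1}=1$ turns the extraneous monoids into conjugates of the identity and leaves the others intact, immediately giving $(\widehat k_{i_1 1},\dots,\widehat k_{i_N N})\in\mathrm{Ad}(\eta)$ for any choice of indices $i_j$, and in particular the minimizing one. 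No geometry is invoked. Your argument, by contrast, re-enters the geometric picture: you realize the injection sequence as an explicit filling $u$ of $\eta$ (built from the upper-bound machinery of Proposition \ref{prop:upperbound} glued to a Lipschitz homotopy inside $\R^2\setminus\mathcal R$), engineer the tiny disks so that the multiplicity of $z_j^*=X_{i_j^*j}$ under $u$ equals exactly $k'_j$, and then feed this back into the lower-bound Lemma \ref{lem:Crucial}. Both strategies are sound; the paper's is shorter and stays entirely inside free-group algebra, while yours makes the geometric meaning transparent and shows how the upper- and lower-bound mechanisms of Theorem \ref{teo_main} compose to recover the purely combinatorial statement. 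One small inaccuracy: your appeal to Lemma \ref{teo_Ageneric} and Proposition \ref{prop:nullcurve} to reduce to filling $\widetilde\eta$ is misplaced, since those results control $\mathcal A$-values rather than pointwise multiplicities; the step is instead justified directly by the annular gluing you describe afterwards, which transfers preimage counts at points of $\mathcal R$ unchanged because the homotopy is performed in $\R^2\setminus\mathcal R$. Also, as you note parenthetically, once the radii are chosen small enough the points $z_j^*$ automatically lie in $E_u$ (each preimage is a regular point of an affine/Riemann chart, and $z_j^*\in\mathcal R$ cannot lie on the images of the conjugating paths or of the homotopy), so the generic perturbation is actually unnecessary.
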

\begin{proof}
For all $j=1,\dots,N$, let $R_j:=R_{i_jj}\in \mathcal R\cap V_j$ be any point (for some index $i_j$), and denote by $\overline{\mathcal R}:=\{R_1,\dots, R_N\}\subset\mathcal R$.   Since there is trivially an isomorphism between $\pi_1(\R^2\setminus \overline{\mathcal R})$ and  $\pi_1(\R^2\setminus \{P_1,\dots, P_N\})$ we can identify the two groups (and in particular the generators) to be both $F(N)$.  Let $\pi^{\mathcal R}: \pi_1(\R^2\setminus {\mathcal R})\rightarrow \pi_1(\R^2\setminus \overline{\mathcal R})$ be the application that associates to a word $w$ of $F(\#(\mathcal R))$ the word $\pi^{\mathcal R}(w)\in F(N)$ obtained from $w$ by erasing all the symbols which belongs to the generators of $F(\#(\mathcal R))$ but not on that of $F(N)$.  The application is a morphism since clearly $\pi^{\mathcal{R}} (\omega \tau)= \pi^{\mathcal{R}} (\omega )\pi^{\mathcal{R}} (\tau)$.\\

 Thus, assuming $(\dots,\widehat k_{ij},\dots)\in \text{{\rm Ad}}(\eta^{\mathcal R})$, we get a sequence of injections $\beta_r\sigma_r\beta_r^{-1}$ such that 
 $$\eta\beta_{1}\alpha_{1}\beta_{1}^{-1}\beta_{2}\alpha_{2}\beta_{2}\dots\beta_{H}\alpha_{H}\beta_{H}=1, \ \ \ H=\sum_{i,j} k_{ij}, \ \ \alpha_i\in \Sigma(\#(\mathcal{R})).$$
 But since $\pi^{\mathcal R}(1)=1$, we also have
 $$\overline \eta\overline\beta_{1}\overline\alpha_{1}\overline\beta_{1}^{-1}\overline\beta_{2}\overline\alpha_{2}\overline\beta_{2}^{-1}\dots \overline\beta_{H}\overline\alpha_{H}\overline\beta_{H}^{-1}=1,$$
 where,  to shortcut the notation we have denoted $\overline \zeta:=\pi^{\mathcal R}(\zeta)$. On the other hand the injections $\overline \beta_r\overline \alpha_r\overline \beta_r^{-1}$ are null if $\alpha_r$ is a generator of $F(\#( \mathcal R))$ but not  of $F(N)$,  and coincides with $\alpha_r$ otherwise. This means that 
 $$(\dots, k_{j},\dots )\in \text{{\rm Ad}}(\eta),$$
where $ k_j:=k_{i_jj}$. In particular the thesis follows if we choose $i_j$ so that $k_{i_jj}=\min\{k_{ij}:X_{ij}\in \mathcal R\cap V_j\}$.
\end{proof}

%
%
%
%

The last ingredient needed to prove Theorem \ref{teo_main2} is the following technical Lemma.

\begin{lemma}\label{Rem_34}
Let $\nu$, $\zeta$ and  $\Psi$ be as in  Corollary \ref{cor2.35}.  Assume also that the $\theta$ is chosen in a way that the map $\Psi(\cdot,\theta):[0,1] \rightarrow  \R^2$ is bi-Lipstchitz and injective. Then if $P_i\in E_{\Psi}$ for all $i=1,\ldots,N$ it holds that
	$$(\mul(\Psi,A,P_1)+k_1,\dots,\mul(\Psi,A,P_n)+k_n)\in \textrm{Ad}(\zeta),$$
	whenever $(k_1,\dots,k_n)\in \textrm{Ad}(\nu)$.
\end{lemma}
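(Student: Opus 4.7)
The plan is to combine Corollary~\ref{cor2.35} with the boundary-multiplicity argument of Lemma~\ref{lem:Crucial}, ending with a short algebraic manipulation in $F(n)$.

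First I would apply Corollary~\ref{cor2.35} to obtain a word $\beta\in F(n)$ with $\jump{\zeta}=\jump{\nu\beta\eta\beta^{-1}}$, where $\eta:=\nu^{-1}\star\Psi(\cdot,\theta)\star\zeta\star\Psi(\cdot,\theta)^{-1}$. The additional hypothesis that $\Psi(\cdot,\theta)$ is bi-Lipschitz and injective becomes critical here: it allows me to cut the annular domain $A=[0,1]\times\Su$ along the segment $[0,1]\times\{\theta\}$, obtaining a topological disk $\tilde A$ onto which $\Psi$ extends Lipschitzly. A direct inspection of the four pieces of $\partial\tilde A$ (the two circles $\{0\}\times\Su$ and $\{1\}\times\Su$ and the two copies of the cut) shows that $\Psi\res\partial\tilde A$ is a cyclic permutation of $\eta$, so the two curves have the same conjugate class. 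Moreover, since $\Psi(\cdot,\theta)\subset\R^2\setminus\{P_1,\dots,P_n\}$, the cut avoids $\Psi^{-1}(\{P_1,\dots,P_n\})$, and therefore the multiplicities computed on $\tilde A$ coincide with $\mul(\Psi,A,P_i)=:m_i$.

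Next, I would apply the argument behind Lemma~\ref{lem:Crucial} to the Lipschitz map $\Psi:\tilde A\to\R^2$ with boundary datum $\Psi\res\partial\tilde A\in\jump{\eta}$: since each $P_i$ belongs to $E_\Psi$, the argument produces
\begin{align*}
(m_1,\dots,m_n)\in\mathrm{Ad}(\eta).
\end{align*}
Combining with the hypothesis $(k_1,\dots,k_n)\in\mathrm{Ad}(\nu)$ and Remark~\ref{rem_attheend}, I obtain monoids $\mu_1,\dots,\mu_K$ with exactly $k_i$ of type $i$, and $\tilde\mu_1,\dots,\tilde\mu_L$ with exactly $m_i$ of type $i$, satisfying $\nu\mu_1\cdots\mu_K\equiv 1$ and $\eta\tilde\mu_1\cdots\tilde\mu_L\equiv 1$ in $F(n)$.

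The proof is then concluded by substituting into $\zeta\equiv\nu\beta\eta\beta^{-1}$:
\begin{align*}
\zeta\cdot(\beta\tilde\mu_1\beta^{-1})\cdots(\beta\tilde\mu_L\beta^{-1})\cdot\mu_1\cdots\mu_K\equiv \nu\beta\bigl(\eta\tilde\mu_1\cdots\tilde\mu_L\bigr)\beta^{-1}\mu_1\cdots\mu_K\equiv 1,
\end{align*}
together with the observation that each conjugate $\beta\tilde\mu_j\beta^{-1}$ is again a monoid of the same type as $\tilde\mu_j$. Thus the inserted monoids realize the tuple $(k_1+m_1,\dots,k_n+m_n)$, which therefore belongs to $\mathrm{Ad}(\zeta)$. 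The main obstacle is the first step: one must arrange the bi-Lipschitz parametrization of $\partial\tilde A$ so that Lemma~\ref{lem:Crucial}, originally stated for maps on the unit disk $D$, transfers without loss to the cut annulus $\tilde A$. The bi-Lipschitz and injectivity assumptions on $\Psi(\cdot,\theta)$ are precisely what make this identification rigorous.
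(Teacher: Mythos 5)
Your proposal is correct and follows essentially the same route as the paper: cut the annulus $A=[0,1]\times\Su$ along $[0,1]\times\{\theta\}$ (using the bi-Lipschitz injectivity of $\Psi(\cdot,\theta)$) to get a disk whose boundary image under $\Psi$ lies in $\jump{\eta}$, apply Lemma~\ref{lem:Crucial} to obtain $(\mul(\Psi,A,P_1),\dots,\mul(\Psi,A,P_n))\in\mathrm{Ad}(\eta)$, and then combine with $(k_1,\dots,k_n)\in\mathrm{Ad}(\nu)$ via the factorization $\jump{\zeta}=\jump{\nu\beta\eta\beta^{-1}}$ from Corollary~\ref{cor2.35}. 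The only cosmetic difference is that the paper realizes the ``cut'' by precomposing with an explicit diffeomorphism $\Phi:D\to A\setminus([0,1]\times\{\theta\})$ so that Lemma~\ref{lem:Crucial} applies verbatim on $D$, whereas you state the transfer to the cut annulus more informally; your final algebraic step, conjugating the $\eta$-monoids by $\beta$ and concatenating, spells out explicitly what the paper compresses into ``by \eqref{237}.''
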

\begin{proof}
Let us denote by $A:=[0,1]\times \Su$ the domain of $\Psi$, and let $\eta:=\nu^{-1}\star\Psi(\cdot,\theta) \star\zeta\star\Psi(\cdot,\theta)^{-1}$ be the curve given by Corollary \ref{cor2.35}.  Now since the map $\Psi(\cdot,\theta)$ is bi-Lipscthitz and injective we can find a diffeomorfism $\Phi: D \rightarrow ([0,1]\times \Su)\setminus ([0,1]\times \{\theta\})$ (see Figure \ref{clarifig}) such that 
\[
\Phi(\Su)=\Phi(\partial D)=(\{0\}\times \Su)\cup (\{1\}\times \Su) \cup  ([0,1]\times\{\theta\}).
\]  
\begin{figure}
\begin{center}
\includegraphics[scale=0.7]{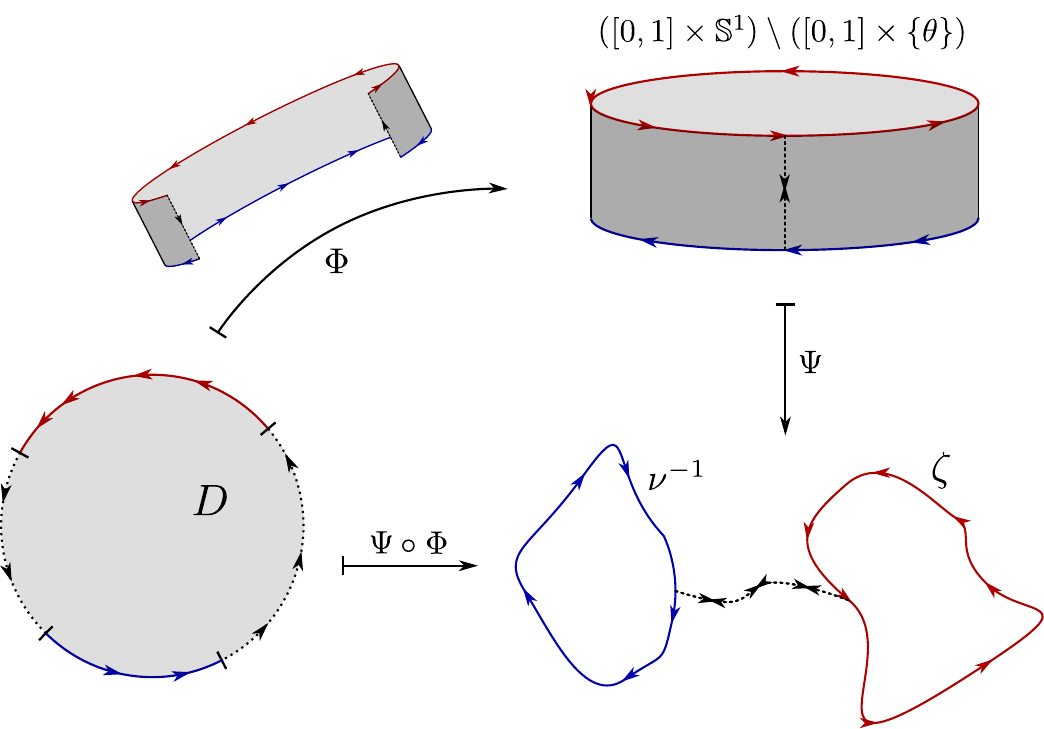}
\end{center}
\caption{A depiction of the diffeomorfism used to prove Lemma \ref{Rem_34}: we can build $\Phi$ so that $\Psi\circ \Phi\res \Su =\eta$ in order to apply Lemma \ref{lem:Crucial} to $\Psi\circ \Phi$.  here we are representing $[0,1]\times \Su$ as a subset in $\R^3$.  }\label{clarifig}
\end{figure}
Observe now that we can build $\Phi$ so that $\Psi\circ \Phi:\Su\rightarrow \R^2$ satisfies also $\Psi\circ \Phi (\Su)=\eta$.  Now since $\Phi$ is a diffeomorphism we have $\mathrm{mul}(\Psi\circ \Phi ,  D,z)=\mathrm{mul}( \Psi,  A,z)$ and,  by Lemma \ref{lem:Crucial} we can infer that $(\mathrm{mul}(\Psi\circ \Phi ,  D,P_1), \ldots, \mathrm{mul}(\Psi\circ \Phi ,  D,P_n))\in \mathrm{Ad}(\eta)$.  Then,  by  \eqref{237} 
	$$(\mul(\Psi,A,P_1)+k_1,\dots,\mul(\Psi,A,P_n)+k_n)\in \textrm{Ad}(\zeta),$$
	whenever $(k_1,\dots,k_n)\in \textrm{Ad}(\nu)$.
\end{proof}
We are now ready to prove Theorem \ref{teo_main2}.

\begin{proof}[Proof of Theorem \ref{teo_main2}]
	\textit{Step 1:}
On the one hand Corollary \ref{cor:lowerbound} implies that, for all $N\geq1$ it holds
$$\mathcal A(\gamma)\geq \mathcal A^N(\gamma).$$
We now show that for all $N\geq1$ we have $\mathcal A^{N+1}(\gamma)\geq \mathcal A^N(\gamma)$, so that existence of the limit in \eqref{eq_main2} is granted, and the previous inequality entails
$$\mathcal A(\gamma)\geq \lim_{N\rightarrow\infty}\mathcal A^N(\gamma).$$
Given a word $\eta$ in $F(N+1)$, we denote by $\Pi_N(\eta)$ the word in $F(N)$ obtained by erasing all the symbols $\sigma_{N+1}$ and $\sigma_{N+1}^{-1}$ from $\eta$.

Now it can be easily seen that if the null word is a $(k_1,\dots,k_N,k_{N+1})$-injection of $\eta$, then the null word is a $(k_1,\dots,k_N,0)$-injection of $\Pi_{N}(\eta)$.
As a consequence for every string $(k_1,\dots,k_N,k_{N+1})\in  \mathrm{Ad}(\eta)$, it holds $(k_1,\dots,k_N)\in \mathrm{Ad}(\Pi_N(\eta))$, and thus since $\sum_{i=1}^N k_i |U_i|\leq \sum_{i=1}^{N+1} k_i |U_i|$, this implies $\mathcal A^{N+1}(\gamma)\geq \mathcal A^N(\gamma)$.
It remains to prove that 
\begin{align}\label{reverse}
\mathcal A(\gamma)\leq \lim_{N\rightarrow\infty}\mathcal A^N(\gamma).
\end{align}
The next steps are devoted to this aim.

\textit{Step 2:} 
We fix $\varepsilon>0$ and use Lemma \ref{lem_approxcurves} to find a smooth curve $\gamma_\varepsilon$ satisfying (a), (b), and (c). Thanks to (b) and (c), we build a Lipschitz homotopy $\Psi_\varepsilon:[0,1]\times \Su\rightarrow \R^2$ between $\gamma$ and $\gamma_\varepsilon$ (e.g., by affine interpolation), in such a way that
\begin{align}\label{Ce}
\int_{\R^2}\text{{\rm mul}}(\Psi_\varepsilon,[0,1]\times \Su,z)dz=\int_0^1\int_{\Su}|\det\nabla \Psi_\varepsilon|dtds\leq C\varepsilon.
\end{align}
If $U_i$, $i\geq1$, denote the bounded connected components of $\R^2\setminus \Gamma$ ($U_0$ will denote the unbounded one), we consider the function $m_\varepsilon(z):= \text{{\rm mul}}(\Psi_\varepsilon,[0,1]\times \Su,z)$ and  select points $Q_i\in (U_i\cap E_{\Psi_\varepsilon})\setminus \Gamma_\varepsilon$, for all $i\geq1$, such that $$m_\varepsilon(Q_i)=\min_{z\in (U_i\cap E_{\Psi_\varepsilon})\setminus \Gamma_\varepsilon}\{m_\varepsilon(z)\}.$$
It will be useful to choose also a point $Q_0\in U_0$ with null multiplicity $m_\varepsilon(Q_0)=0$. Moreover, we denote  by $(V_j)_{j=0}^{M_\varepsilon}$ the connected components of $\R^2\setminus \Gamma_\varepsilon$ (where $V_{0}$ denotes the unbounded one) and  we denote, for all $i\geq 0$ and $j=0,\dots,M_\varepsilon$,
 $$W_{ij}:=U_i\cap V_j.$$
 Some of these sets might be empty. 
 For all $i\geq 0$ and $j=0,\dots, M_\varepsilon$  it is necessary to choose points $R_{ij}\in W_{ij}$ (for those $W_{ij}\neq \varnothing$) as follows:
 \begin{align}
R_{ij}:=\begin{cases}
		Q_i&\text{ if }Q_i\in V_j,\\
		\text{{\rm argmin}}\{m_\varepsilon(z):z\in W_{ij}\cap E_{\Psi_\varepsilon}\}&\text{ otherwise},
\end{cases}
 \end{align}
 where the second choice is arbitrary, whenever possible (if $W_{ij}\neq \varnothing)$.  Finally, for all $i\geq 0$ and $j=0,\dots,M_\varepsilon$, we denote 
 $$m_{ij}:=m_\varepsilon(R_{ij}),\qquad \qquad\text{if }W_{ij}\neq\varnothing,$$
and  $m_{ij}=0$ otherwise. Notice that $m_\varepsilon(R_{00})=0$ and so $m_{00}=0$.
%
%
%
%

 Now, we know that 
 \begin{align}\label{Aeps}
 	\mathcal A(\gamma_\varepsilon)=\sum_{j=1}^{M_\varepsilon}k_j|V_j|,
 \end{align}
 for some $(k_1,\dots,k_{M_\varepsilon})\in \text{{\rm Ad}}(\gamma_\varepsilon)$. We set $$K:=\max\{1,k_1,\dots,k_{M_\varepsilon}\}\qquad \qquad H:=\min\{|V_1|,\dots,|V_{M_\varepsilon}|\};$$ then we fix $N\in \N$ big enough so that 
 \begin{align}\label{48}
\sum_{i>N}|U_i|\leq\varepsilon\min\left\{ \frac{1}{K},\frac{H}{2}\right\}.
 \end{align}
Finally, for all $j=0,\dots,M_\varepsilon$,  we also set  $\widetilde W_{(N+1)j}:= \cup_{h> N}{W_{hj}}$.
 From \eqref{Ce} we deduce that 
 \begin{align}\label{45}
 	\sum_{j=0}^{M_\varepsilon}\sum_{i\geq1}m_{ij}|W_{ij}|= \sum_{j=0}^{M_\varepsilon}\sum_{i\geq1}m_\varepsilon(R_{ij})|W_{ij}|\le \sum_{j=0}^{M_\varepsilon}\sum_{i\geq1}\int_{W_{ij}}\text{{\rm mul}}(\Psi_\varepsilon,[0,1]\times \Su,z)dz\leq  C\varepsilon.
 \end{align}
%
%
%
%
  Let us compute $\mathcal A^N(\gamma)$; let  $\gamma^{(N)}$ be a word representing $\gamma$ in $\pi_1(\R^2\setminus \{Q_1,\dots,Q_{N}\})=F(N)$. Then 
 \begin{align}\label{aN}
 	\mathcal A^N(\gamma)=\sum_{i=1}^N\ell_i|U_i|=\sum_{i=1}^N\sum_{j=0}^{M_\varepsilon}\ell_i|W_{ij}|,\qquad \qquad \ell_i\in \N,
 \end{align}
 for a suitable  $(\ell_1,\dots,\ell_N)\in \text{{\rm Ad}}(\gamma_{(N)})$. From \eqref{45} we have
 \begin{align}\label{aN2}
 	\mathcal A^N(\gamma)\geq -C\varepsilon+ \sum_{j=0}^{M_\varepsilon}\sum_{i=1}^N(\ell_i+m_{ij})|W_{ij}|=-C\varepsilon+ \sum_{j=0}^{M_\varepsilon}\sum_{i=1}^N\widehat k_{ij}|W_{ij}|.
 \end{align}
where we have noted  $\widehat k_{ij}:=\ell_i+m_{ij}$; now we claim that 
\begin{align}\label{claimfinale*}
	\mathcal A(\gamma_\varepsilon)	\leq C\varepsilon+\sum_{j=0}^{M_\varepsilon}\sum_{i=1}^N\widehat k_{ij}|W_{ij}|.
\end{align}
This will be shown in the next step. 
%
%
%
%
From \eqref{claimfinale*} and \eqref{aN2} we will deduce
\begin{align*}
\mathcal A(\gamma_\varepsilon)\leq 2C\varepsilon+\mathcal A^N(\gamma),
\end{align*}
for all $N$ big enough.
But, from (b) and (c) in Lemma \ref{lem_approxcurves} 
we also have $\mathcal A(\gamma)\leq C\varepsilon+\mathcal A(\gamma_\varepsilon)$, thus we infer
\begin{align*}
	\mathcal A(\gamma)\leq 3C\varepsilon+\mathcal A^N(\gamma)\leq 3C\varepsilon+\lim_{N\rightarrow \infty}\mathcal A^N(\gamma), 
\end{align*}
which implies \eqref{reverse} by arbitraryness of $\varepsilon>0$. The thesis follows.

\textit{Step 3:}
We are left with proving \eqref{claimfinale*}. Using \eqref{Aeps}, this is equivalent to show
\begin{align}\label{Aeps2}
	\sum_{j=1}^{M_\varepsilon}k_j|V_j|=\sum_{j=1}^{M_\varepsilon}\sum_{i=0}^\infty k_j|W_{ij}|\leq C\varepsilon+\sum_{j=0}^{M_\varepsilon}\sum_{i=1}^N\widehat k_{ij}|W_{ij}|.
\end{align}
Setting $\widehat k_{0j}:=m_{0j}=m_\varepsilon(R_{0j})$, for all $j=1,\dots,M_\varepsilon$, still from \eqref{Ce} (and arguing as in \eqref{45}), we know that  $\sum_{j=1}^{M_\varepsilon}\widehat k_{0j}|W_{0j}|\leq C\varepsilon.$
Hence, using this, in order to show \eqref{Aeps2} we will prove that 
\begin{align}\label{claim_step2}
	\sum_{j=1}^{M_\varepsilon} k_j|V_{j}|\leq C\varepsilon+\sum_{j=0}^{M_\varepsilon}\sum_{i=0}^N\widehat k_{ij}|W_{ij}|
\end{align} 
Recalling that $\widetilde W_{(N+1)j}= \cup_{h> N}{W_{hj}}$, we set $$\widehat{k}_{(N+1)j}:=\min_{i>N}\{m_{ij}+k_j\}=m_{h_jj}+k_j=m_\varepsilon(R_{h_jj})+k_j$$ (for some index $h_j>N$) and we infer from \eqref{45}
$$\sum_{j=0}^{M_\varepsilon}\widehat k_{(N+1)j} |\widetilde W_{(N+1)j}|\leq C\varepsilon.$$ 
Whence, denoting $\widetilde W_{(N+1)j}$ by $W_{(N+1)j}$ (to shortcut notation), \eqref{claim_step2} will follow if we prove that, for some constant $C>0$, it holds
\begin{align}\label{claim_step3}
	\sum_{j=1}^{M_\varepsilon} k_j|V_{j}|\leq C\varepsilon+\sum_{j=0}^{M_\varepsilon}\sum_{i=0}^{N+1}\widehat k_{ij}|W_{ij}|.
\end{align} 
Denoting $R_{(N+1)j}:=R_{h_jj}$, we now consider the family $\mathcal R:=\{R_{ij}:i=0,\dots,N+1,\;j=1,\dots,M_\varepsilon\}$. We now see that the claim \eqref{claim_step3} follows from Proposition \ref{prop:crucial} provided that 
\begin{align}\label{413}
	(\dots\widehat k_{ij}\dots)\in \textrm{Ad}(\gamma_\varepsilon^{\mathcal R}),
\end{align}
because in this case, setting $k'_j:=\min_{i\leq N+1}\{\widehat k_{ij}\}$, we have
\begin{align*}
	\sum_{j=1}^{M_\varepsilon} k_j|V_{j}|\leq\sum_{j=1}^{M_\varepsilon} k'_j|V_{j}|= \sum_{j=1}^{M_\varepsilon} \sum_{i=0}^{N+1} k_j'|W_{ij}|\leq  \sum_{j=1}^{M_\varepsilon}\sum_{i=0}^{N+1}\widehat k_{ij}|W_{ij}|,
\end{align*} 
which implies \eqref{claim_step3}. Now, using the definition of $\widehat k_{ij}$, the fact that $R_{ij}\in V_j$ for all $i=0,\dots, N+1$, and so defining $P_j$ as one of them, \eqref{413} readily follows from Lemma \ref{Rem_34}. 
\end{proof}
\textbf{Data availability statement:} data sharing is not applicable to this article as no data were created or analyzed in this study.\\

  \textbf{Acknowledgements:}
MC thanks the financial support of PRIN 2022R537CS "Nodal optimization, nonlinear elliptic equations, nonlocal geometric problems, with a focus on regularity" funded by the European Union under Next Generation EU.  RS is  member of the Gruppo Nazionale per l'Analisi Matematica, la Probabilit\`a e le loro Applicazioni (GNAMPA) of the Istituto Nazionale di Alta Matematica (INdAM), and joins the project CUP\_E53C22001930001.
We thank the financial support of the F-CUR project number  2262-2022-SR-CONRICMIUR\_PC-FCUR2022\_002   of the University of Siena, and the financial support of 
PRIN 2022PJ9EFL "Geometric Measure Theory: Structure of Singular Measures, Regularity Theory and Applications in the Calculus of Variations" funded by the European Union under Next Generation EU. Views and opinions expressed are however those of the author(s) only and do not necessarily reflect those of the European Union or The European Research Executive Agency.  Neither the European Union nor the granting authority can be held responsible for them.

\bibliography{references}
\bibliographystyle{plain}

%
%
%
%
%


\end{document}